 \newtheorem{thm}{Theorem}[section]
\newtheorem*{theorem*}{Theorem}
 \newtheorem{cor}[thm]{Corollary}
 \newtheorem{lem}[thm]{Lemma}
 \newtheorem{prop}[thm]{Proposition}
 \theoremstyle{definition}
  \newtheorem{defn}[thm]{Definition}
 \newtheorem{rem}[thm]{Remark}
 \theoremstyle{definition}
 \newtheorem{ex}[thm]{Example}
 \numberwithin{equation}{section}
 \newtheorem{question}[thm]{Question}
\begin{document}

\author{Kelly Bickel}
\address{Georgia Institute of Technology, School of Mathematics,
  Atlanta, GA 30308}
\address{Bucknell University, Department of Mathematics, Lewisburg, PA 17837}
\email{kbickel3@math.gatech.edu}

\author{Greg Knese}
\address{Washington University in St. Louis, Department of
  Mathematics, St. Louis, MO 63130}
\email{geknese@math.wustl.edu}
\date{\today}

\thanks{GK supported by NSF grant DMS-1048775}

\keywords{Agler decomposition, scattering systems, transfer function,
  colligation, Lax-Phillips, input/state/output system, bidisk,
  polydisk, Schur class, de Branges-Rovnyak spaces, analytic
  extension, reproducing kernel Hilbert space, operator ranges}

\subjclass[2010]{Primary 47B32; Secondary 47A57, 47A40, 93C35, 46E22}

\title[Canonical Agler Decompositions and T.F.R.'s]{Canonical Agler Decompositions and Transfer Function Realizations}
\maketitle


\newcommand{ \LL}{\left \langle}
\newcommand{ \RR}{\right \rangle}
\newcommand{ \D}{\mathbb{D}}
\newcommand{\C}{\mathbb{C}}
\newcommand{\E}{\mathbb{E}}
\newcommand{\T}{\mathbb{T}}
\newcommand{\mcH}{\mathcal{H}}
\newcommand{\mcK}{\mathcal{K}}
\newcommand{ \K}{\mathcal{K}}
\newcommand{ \U}{\mathcal{U}}
\newcommand{ \F}{\mathcal{F}}
\newcommand{ \W}{\mathcal{W}}
\newcommand{\Q}{\mathcal{Q}}
\newcommand{\M}{\mathcal{M}}
\newcommand{\mcR}{\mathcal{R}}
\newcommand{ \WT}{\widetilde{\mathcal{W}}}
\newcommand{ \scrH}{\mathscr{H}}
\newcommand{ \ZZ}{\mathbb{Z}}
\newcommand{ \NN}{\mathbb{N}}
\newcommand{ \beq}{\begin{equation*}}
\newcommand{ \eeq}{\end{equation*}}
\newcommand{\supp}{\text{supp}}
\newcommand{ \Dphi}{D_{\Phi^*}}
\newcommand{ \Kphi}{\mathcal{K}_{\Phi}}
\newcommand{ \Hphi}{\mathcal{H}_{\Phi}}
\newcommand{ \SEE}{\mathcal{S}_2(E,E_*)}
\newcommand{\op}{\begin{bmatrix} I & \Phi \\ \Phi^* & I \end{bmatrix}}
\newcommand{\vecf}{\begin{bmatrix} f \\ g \end{bmatrix}}
\newcommand{\vecphi}{ \left[ \begin{array}{c}
\Phi \\  I \end{array} \right]}
\newcommand{\vecphis}{ \left[ \begin{array}{c}
I \\ \Phi^* \end{array} \right]}
\newcommand{\vecft}{ \left[ \begin{array}{c}
f' \\ g' \end{array} \right]}
\newcommand{\vecs}[2]{\begin{bmatrix} #1 \\ #2 \end{bmatrix}}

\newcommand{\Kmax}{ \mathcal{H}(K^{max}_j)}
\newcommand{\Kmin}{\mathcal{H}(K^{min}_j)}

\newcommand{\ip}[2]{\left\langle #1, #2 \right\rangle}

\bibliographystyle{plain}

\begin{abstract}
  A seminal result of Agler proves that the natural de Branges-Rovnyak
  kernel function associated to a bounded analytic function on the
  bidisk can be decomposed into two shift-invariant pieces. Agler's
  decomposition is non-constructive---a problem remedied by work of
  Ball-Sadosky-Vinnikov, which uses scattering systems to produce
  Agler decompositions through concrete Hilbert space geometry.  This
  method, while constructive, so far has not revealed the rich
  structure shown to be present for special classes of
  functions---inner and rational inner functions.  In this paper, we
  show that most of the important structure present in these
  special cases extends to general bounded analytic functions.  We
  give characterizations of all Agler decompositions, we prove the
  existence of coisometric transfer function realizations with natural
  state spaces, and we characterize when Schur functions on the bidisk
  possess analytic extensions past the boundary in terms of associated
  Hilbert spaces.
\end{abstract}

\section{Introduction}

Let $E$ and $E_*$ be separable Hilbert spaces and recall that the
\emph{Schur class $\mathcal{S}_d(E, E_*)$}
\nomenclature{$\mathcal{S}_d(E, E_*) $}{$d$ variable Schur Class} is
the set of holomorphic functions $\Phi: \D^d \rightarrow
\mathcal{L}(E,E_*)$ such that each $\Phi(z): E \rightarrow E_*$ is a
linear contraction. In one variable, the structure of these functions
is well-understood and they play key roles in many areas of both pure
and applied mathematics. For example, they are objects of interest
in $H^{\infty}$ control theory, act as scattering functions of
single-evolution Lax-Phillips scattering systems, and serve as the
transfer functions of one-dimensional dissipative, linear,
discrete-time input/state/output (i/s/o) systems \cite{bsv05, he74,
  hj99}.  Moreover, every $\Phi \in \mathcal{S}_1(E, E_*)$ can actually
be realized as both a scattering function of a Lax-Phillips scattering
system and a transfer function of a dissipative, linear, discrete-time
i/s/o system.  For simplicity, we omit the discussion of the connection
to the interesting topic of von Neumann inequalities; see \cite{ampi,
  bsv05, gkvw08}.

The situation in several variables is more complicated; although Schur
functions are still the scattering functions of $d$-evolution
scattering systems and transfer functions of $d$-dimensional dissipative, 
linear, discrete-time i/s/o systems, the converse is not
always true; there are functions in $\mathcal{S}_d(E,E_*)$ that cannot
be realized as transfer functions of dissipative i/s/o systems.
To make this precise, let $\mathcal{M}= \mathcal{M}_1 \oplus \dots
\oplus \mathcal{M}_d$ be a separable Hilbert space, and  for each $z \in \mathbb{D}^d$, 
define the multiplication operator $\mathcal{E}_z: = z_1P_{\mathcal{M}_1} + \dots + z_d
P_{\mathcal{M}_d},$ where each
$P_{\mathcal{M}_r}$ is the projection onto $\mathcal{M}_{r}$.

\begin{defn} Let $\Phi \in \mathcal{S}_d(E,E_*).$ A \emph{Transfer Function Realization}
 (T.F.R.) of $\Phi$ consists of a Hilbert space $\mathcal{M}= \mathcal{M}_1 \oplus \dots
\oplus \mathcal{M}_d$ and a contraction $U:  \mathcal{M} \oplus E  \rightarrow
\mathcal{M} \oplus E_*$ such that if $U$ is written as
\[
U = 
\left[ \begin{array}{cc} 
A & B \\
C & D 
\end{array} \right] : 
\left[  \begin{array}{c}
\mathcal{M} \\
 E 
\end{array}  \right]
\rightarrow 
\left[  \begin{array}{c}
\mathcal{M} \\
 E_*  \end{array} \right], 
\]
then $\Phi(z) = D + C\left( I_{\mathcal{M}} - \mathcal{E}_zA \right)^{-1}\mathcal{E}_z B$.
The Hilbert space $\mathcal{M}$ is called the \emph{state space} and
the contraction $U$ is called the \emph{colligation}.  One can
associate a d-dimensional dissipative, linear, discrete-time $i/s/o$ system with the
pair $(\mathcal{M}, U)$. The transfer function realization is called
isometric, coisometric, or unitary whenever $U$ is isometric,
coisometric, or unitary.\end{defn}

In \cite{ag1,ag90}, J. Agler showed that every function in
$\mathcal{S}_2(E,E_*)$ has a T.F.R. and used the realizations to
generalize the Pick interpolation theorem to two variables. Since
Agler's seminal results, these formulas have been used frequently to
both generalize one-variable results and address strictly multivariate
questions on the polydisc as in \cite{agmc_isb, agmc_dv, mcc10a,
  amy10a, baltre98, kn07b, kn08ua, mcc12}.  There is also a simple
relationship between transfer function realizations and positive
kernels:

\begin{thm} \label{thm1} \emph{(Agler \cite{ag90}).} Let $\Phi \in \mathcal{S}_d(E,E_*)$. Then,
$\Phi$ has a transfer function realization if and only if there 
are positive holomorphic kernels $K_1, \dots, K_d: \mathbb{D}^d \times \mathbb{D}^d \rightarrow \mathcal{L}(E_*)$ such that for all
$z,w \in \D^d$
\begin{equation*}  I_{E_*}  - \Phi(z) \Phi(w)^* = (1-z_1 \bar{w}_1) K_1(z,w) + \dots + (1-z_d
\bar{w}_d) K_d(z,w).  \end{equation*} 
\end{thm}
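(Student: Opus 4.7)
My plan is to prove both directions of the equivalence separately.

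For the necessity direction (existence of a T.F.R. implies the kernel decomposition), I would work directly with the colligation $U = \left[\begin{smallmatrix} A & B \\ C & D \end{smallmatrix}\right]:\mathcal{M}\oplus E \to \mathcal{M}\oplus E_*$, the defect $\Delta := I - UU^* \geq 0$, and the operator $H(z) := C(I_{\mathcal{M}} - \mathcal{E}_z A)^{-1}$. Substituting the defect identities $I - DD^* = CC^* + \Delta_{22}$, $I - AA^* = BB^* + \Delta_{11}$, and $AC^* + BD^* = -\Delta_{12}$ into $I_{E_*} - \Phi(z)\Phi(w)^*$, and using the algebraic relations $F(z)\mathcal{E}_z A = F(z) - I$ and $A^*\mathcal{E}_w^* F(w)^* = F(w)^* - I$ (where $F(z) = (I_{\mathcal{M}} - \mathcal{E}_z A)^{-1}$) to telescope mixed terms, I expect to arrive at the identity
\[
I_{E_*} - \Phi(z)\Phi(w)^* \;=\; H(z)\bigl(I_{\mathcal{M}} - \mathcal{E}_z \mathcal{E}_w^*\bigr)H(w)^* \;+\; G(z)\Delta G(w)^*,
\]
where $G(z) := \bigl[\,H(z)\mathcal{E}_z \;\; I_{E_*}\,\bigr]: \mathcal{M}\oplus E_* \to E_*$. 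Since $I_{\mathcal{M}} - \mathcal{E}_z\mathcal{E}_w^* = \sum_j (1 - z_j\bar{w}_j)P_{\mathcal{M}_j}$, the first summand is already $\sum_j (1 - z_j\bar{w}_j)H(z)P_{\mathcal{M}_j}H(w)^*$ with each piece manifestly a positive kernel. The defect term $G(z)\Delta G(w)^*$ is positive but lacks the required $(1-z_j\bar{w}_j)$ factors; to absorb it I would multiply and divide by $(1 - z_1\bar{w}_1)$, using that $1/(1 - z_1\bar{w}_1)$ is the positive Szeg\H{o} kernel and that Schur products of positive kernels remain positive, so that $G(z)\Delta G(w)^*/(1 - z_1\bar{w}_1)$ is itself positive. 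Then set
\[
K_1(z,w) := H(z)P_{\mathcal{M}_1}H(w)^* + \frac{G(z)\Delta G(w)^*}{1 - z_1\bar{w}_1}, \qquad K_j := H(z)P_{\mathcal{M}_j}H(w)^* \quad (j\geq 2).
\]

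For sufficiency I would run a lurking-isometry construction. Take $\mathcal{M}_j := \mathcal{H}(K_j)$, the $E_*$-valued reproducing kernel Hilbert space with kernel $K_j$, and $\mathcal{M} := \bigoplus_j \mathcal{M}_j$. For $w \in \mathbb{D}^d$ and $\xi \in E_*$ let $k^w\xi := (K_1(\cdot,w)\xi, \ldots, K_d(\cdot,w)\xi) \in \mathcal{M}$. Specializing the assumed decomposition to $z = w'$ and expanding both sides via the reproducing property rearranges it as
\[
\bigl\langle (\mathcal{E}_w^* k^w \xi,\, \xi),\, (\mathcal{E}_{w'}^* k^{w'} \xi',\, \xi') \bigr\rangle_{\mathcal{M}\oplus E_*} \;=\; \bigl\langle (k^w\xi,\, \Phi(w)^*\xi),\, (k^{w'}\xi',\, \Phi(w')^*\xi') \bigr\rangle_{\mathcal{M}\oplus E}.
\]
Hence $V_0:(\mathcal{E}_w^* k^w\xi,\, \xi) \mapsto (k^w\xi,\, \Phi(w)^*\xi)$ is a well-defined isometry on the closed linear span of its domain; extend by zero on the orthogonal complement to a contraction $V:\mathcal{M}\oplus E_* \to \mathcal{M}\oplus E$, and set $U := V^*$. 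Writing $U = \left[\begin{smallmatrix}A & B \\ C & D\end{smallmatrix}\right]$, the identity $V(\mathcal{E}_w^* k^w\xi, \xi) = (k^w\xi, \Phi(w)^*\xi)$ yields $A^*\mathcal{E}_w^* k^w\xi + C^*\xi = k^w\xi$ and $B^*\mathcal{E}_w^* k^w\xi + D^*\xi = \Phi(w)^*\xi$. Solving the first for $k^w\xi = (I - A^*\mathcal{E}_w^*)^{-1}C^*\xi$ (valid since $\|A\mathcal{E}_w\| < 1$ on $\mathbb{D}^d$) and substituting into the second gives $\Phi(w)^* = D^* + B^*\mathcal{E}_w^*(I - A^*\mathcal{E}_w^*)^{-1}C^*$, which upon taking adjoints is the required realization formula.

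The main obstacle I anticipate is on the necessity side: executing the algebraic telescoping so that the non-defect terms cleanly collapse into $H(z)(I_{\mathcal{M}} - \mathcal{E}_z\mathcal{E}_w^*)H(w)^*$, and then handling the residual defect kernel. The Szeg\H{o}-kernel absorption trick above is one way; an alternative (more commonly presented) route is first to dilate $U$ to a coisometric colligation realizing the same $\Phi$ on a suitably enlarged state space, whereupon $\Delta = 0$ and no absorption is needed, at the cost of a dilation step. The sufficiency direction is conceptually straightforward once the rearranged inner-product identity is noticed, with only the standard extension-by-zero to upgrade a densely defined isometry to a global contraction.
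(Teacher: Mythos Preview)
The paper does not prove this theorem; it is stated in the introduction as a known result of Agler and cited to \cite{ag90} without proof. Your proposal therefore cannot be compared against a proof in the paper, but it is correct and follows the standard route.

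For sufficiency, your lurking-isometry argument is exactly the construction the paper itself uses later (Remark~\ref{rem:contfr}) in the case $d=2$ to build canonical functional model colligations, so this direction is entirely in line with the paper's toolkit.

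For necessity, your telescoping computation is correct: with $F(z)=(I-\mathcal{E}_z A)^{-1}$ and $H(z)=CF(z)$, the identities $F(z)\mathcal{E}_z A=F(z)-I$ and $A^*\mathcal{E}_w^*F(w)^*=F(w)^*-I$ collapse the non-defect terms to $H(z)(I-\mathcal{E}_z\mathcal{E}_w^*)H(w)^*$, and the remaining terms assemble into $G(z)\Delta G(w)^*$ with $G(z)=[\,H(z)\mathcal{E}_z\ \ I\,]$. Your Szeg\H{o}-kernel absorption of the defect term is valid: the Schur product of the scalar positive kernel $(1-z_1\bar{w}_1)^{-1}$ with the operator-valued positive kernel $G(z)\Delta G(w)^*$ is again positive (this is the scalar-times-operator version of the Schur product theorem, which holds without restriction). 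The alternative you mention---dilating $U$ to a coisometry so that $\Delta=0$---is the more common textbook presentation and avoids the absorption step; either works.
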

This decomposition using positive kernels is called an \emph{Agler
  decomposition} of $\Phi$.
In two variables, it is convenient to reverse the ordering, and throughout
this paper, positive kernels $(K_1, K_2)$ are called \emph{Agler kernels of}
$\Phi \in \mathcal{S}_2(E,E_*)$ if for all $z,w \in \D^2$
\begin{equation}\label{eqn:agdecomp}  I_{E_*} - \Phi(z) \Phi(w)^* = (1 -z_1 \bar{w}_1) K_2(z,w) +
  (1-z_2\bar{w}_2) K_1(z,w).  \end{equation} \nomenclature{$(K_1,K_2)$}{Agler kernels of $\Phi$}

Agler proved the existence of a pair of Agler kernels for each
function in $\SEE$ and then showed this gives a transfer function
realization via Theorem \ref{thm1}.  It is often easier to go from
kernels to realizations because positive kernels immediately bring
operator theory and reproducing kernel Hilbert space methods into the
picture. We review some of these concepts related to positive kernels
below.

\begin{rem} 
Recall that $K : \Omega \times \Omega \rightarrow \mathcal{L}(E)$ is a
\emph{positive kernel on $\Omega$} if for each $N \in \NN$
\[ \sum_{i,j =1}^N \LL K(x_i,x_j) \eta_j, \eta_i \RR_{E} \ge 0 \]
for all $x_1, \dots, x_N \in \Omega$ and $\eta_1, \dots, \eta_N \in
E.$ Similarly, $\mathcal{H}$ is a \emph{reproducing kernel Hilbert
  space on $\Omega$} if $\mathcal{H}$ is a Hilbert space of functions on
defined  $\Omega$ such that evaluation at $x$ is a bounded linear
operator for each $x \in \Omega.$ Then there is a unique positive
kernel $K: \Omega \times \Omega \rightarrow \mathcal{L}(E)$ with
\[ 
\LL f, K(\cdot, y) \eta \RR_{\mathcal{H}} = \LL f(y), \eta
\RR_{E} \qquad \forall \ f \in \mathcal{H}, y \in \Omega, \text{ and }
\eta \in E. 
\]
Conversely, given any positive kernel $K$ on $\Omega$, there is a
reproducing kernel Hilbert space, denoted $\mathcal{H}(K)$, on $\Omega$ with $K$
as its reproducing kernel. For details, see \cite{bv03b}.  \end{rem}

The kernels $K_1,K_2$ are written in reverse order in
\eqref{eqn:agdecomp} because upon dividing the equation through by
$(1-z_1\bar{w}_1)(1-z_1\bar{w}_2)$, an Agler decomposition can be
given a much more natural interpretation in terms of de
Branges-Rovnyak spaces.

\begin{rem} Assume $(K_1, K_2)$ are Agler kernels of $\Phi$ and rewrite (\ref{eqn:agdecomp}) 
as follows:
\begin{equation} \label{eqn:agdecomp2} \frac{I -\Phi(z) \Phi(w)^*}{(1-z_1\bar{w}_1) (1-z_2\bar{w}_2)} 
= \frac{K_1(z,w)}{1-z_1\bar{w}_1} + \frac{K_2(z,w)}{1-z_2\bar{w}_2}. 
\end{equation}
Each term in (\ref{eqn:agdecomp2}) is a positive kernel and so, we can define the following 
reproducing kernel Hilbert spaces:
\[ \mathcal{H}_{\Phi}:= \mathcal{H} \left( \frac{I -\Phi(z) \Phi(w)^*}{(1-z_1\bar{w}_1) 
(1-z_2\bar{w}_2)} \right) \ \ \text{ and } \ \ H_j : = \mathcal{H} \left( \frac{K_j(z,w)}{1-z_j\bar{w}_j} 
\right), \]
for $j=1,2.$ The Hilbert space $\mathcal{H}_{\Phi}$ \nomenclature{$\mathcal{H}_{\Phi}$}{two-variable de 
Branges-Rovnyak space} is \emph{the two-variable de Branges-Rovnyak space 
associated} to $\Phi$. For $j=1,2,$ define the function $Z_j$ by $Z_j(z):= z_j$. 
Then the $H_j$ Hilbert spaces have the following properties:
\begin{itemize}
\item[(1)]  $Z_j H_j \subseteq H_j$ and multiplication by $Z_j$ on $H_j$ is a contraction.
\item[(2)] The reproducing kernels of the $H_j$ sum to the kernel of $\mathcal{H}_{\phi}$.
\end{itemize}
Basic facts about reproducing kernels imply that if Hilbert spaces $H_1$ and $H_2$ satisfy $(1)$ and $(2)$, then
 the numerators of their reproducing kernels are Agler kernels of
 $\Phi.$ 
\end{rem}

Agler used non-constructive methods to obtain Agler kernels, and a
major stride was made in this theory when Ball-Sadosky-Vinnikov proved
the existence of Agler kernels through constructive Hilbert space
geometric methods.  Indeed, our analysis is motivated by their work on
two-evolution scattering systems and scattering subspaces associated
to $\Phi \in \SEE.$ In \cite{bsv05}, they showed that such scattering
subspaces have canonical decompositions into subspaces $S_1$ and
$S_2$, each invariant under multiplication by $Z_1$ or $Z_2.$ This
work was continued in \cite{gkvw08} where a specific scattering
subspace associated to $\Phi$, denoted $\Kphi$, was used to show that
canonical decompositions of $\Kphi$ yield Agler kernels $(K_1,K_2)$ of
$\Phi$.  The analysis from \cite{bsv05} was also extended in \cite{bkvsv}; 
here, many results from \cite{bsv05} are illuminated or extended 
via the theory of formal reproducing kernel Hilbert spaces.

While more explicit, the approaches so far do not shed much light on
the actual structure of the Hilbert spaces $\mathcal{H}(K_j)$ and the
functions contained therein for general Schur functions.  The spaces
$\mathcal{H}(K_j)$ have been shown to possess a very rich structure
when $\Phi$ is an \emph{inner} function or a \emph{rational inner} function
\cite{bic12, bk12, colwer99, knapde}.  This has led to applications in the study
of two variable matrix monotone functions in \cite{mcc10} and in the
study of \emph{three} variable rational inner functions in
\cite{bk12}.  This structure is also important in the
Geronimo-Woerdeman characterizations of bivariate Fej\'er-Riesz
factorizations as well as the related bivariate auto-regressive filter
problem \cite{gw04}.  The theory is much simpler in these
cases because Agler kernels can be constructed directly from
orthogonal decompositions of $\Hphi$.

Therefore, the major goal of this paper is to show directly that the
rich Agler kernel structure present when $\Phi$ is inner is still
present when $\Phi$ is not an inner function.  A direct application of
this will be to prove that every function in $\SEE$ possesses a
\emph{coisometric} transfer function realization with state space
$\mcH(K_1)\oplus \mcH(K_2)$ for some pair of Agler kernels
$(K_1,K_2)$; this construction answers a question posed by Ball and Bolotnikov
in \cite{bb11}.  We also generalize classical work of Nagy-Foias
connecting regularity of $\Phi \in \mathcal{S}_1(E,E_{*})$ on the
boundary to the regularity of functions in its associated de
Branges-Rovnyak space. See \cite{sar94} for a discussion.

We now outline the rest of the paper.  The structure of $\Hphi$ is
revealed by embedding an isometric copy into the larger scattering
subspace $\Kphi$ alluded to above.  The reader need not know anything
about scattering theory---the basic facts we need are built from
scratch in Section \ref{sect:scattering}.  In Section
\ref{sect:construction}, canonical orthogonal decompositions of $\Kphi$
are projected down to canonical decompositions of $\Hphi$ and these
yield certain pairs of extremal Agler kernels of $\Phi$
denoted
\[ (K^{max}_1, K^{min}_2) \ \ \text{ and } \ \ (K^{min}_1,
K^{max}_2). \]
These pairs are related by a positive kernel $G:\D^2\times \D^2 \to \mathcal{L}(E_*)$
\[
G(z,w) := \frac{K_1^{max}(z,w) - K_1^{min}(z,w)}{1-z_1\bar{w}_1} =
\frac{K_2^{max}(z,w) - K_2^{min}(z,w)}{1-z_2\bar{w}_2}.
\]
In section 4, we show that all Agler kernels of $\Phi$ can be characterized in terms of the
special kernels $K_1^{min}, K_2^{min}, G$:  
\begin{theorem*}[ \ref{thm:maxmin}]
Let $\Phi \in \mathcal{S}_2(E, E_*)$
and let $K_1, K_2: \D^2 \times \D^2 \rightarrow \mathcal{L}(E_*)$. 
Then $(K_1, K_2)$ are Agler kernels of $\Phi$ if and only if there 
are positive kernels $G_1,G_2: \D^2  \times \D^2 \rightarrow \mathcal{L}(E_*)$ 
such that
\[
\begin{aligned} 
K_1(z,w) =& K_1^{min}(z,w) + (1-z_1 \bar{w}_1) G_1(z,w) \\
K_2(z,w) =& K_2^{min}(z,w) + (1-z_2 \bar{w}_2) G_2(z,w) 
\end{aligned}
\]
and $G = G_1 + G_2.$ \end{theorem*}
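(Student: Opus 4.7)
The plan is to handle the two implications separately, with the reverse direction being a direct algebraic check and the forward direction reducing almost entirely to a single positivity question.

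The reverse implication is immediate once the formulas are substituted. Plugging the hypothesized expressions for $K_1$ and $K_2$ into the right hand side of (\ref{eqn:agdecomp}) yields
\[
(1-z_1\bar w_1)K_2^{min} + (1-z_2\bar w_2)K_1^{min} + (1-z_1\bar w_1)(1-z_2\bar w_2)(G_1 + G_2).
\]
Combining the hypothesis $G_1 + G_2 = G$ with the identity $K_1^{max} = K_1^{min} + (1-z_1\bar w_1)G$, which is a rearrangement of the defining identity of $G$, this collapses to $(1-z_1\bar w_1)K_2^{min} + (1-z_2\bar w_2)K_1^{max}$. This equals $I_{E_*} - \Phi(z)\Phi(w)^*$ because $(K_1^{max}, K_2^{min})$ is an Agler pair. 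Positivity of $K_1$ and $K_2$ is automatic, since each is a sum of positive kernels.

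For the forward implication, I would first pin down the only possible choice of $G_1$ and $G_2$. Given an Agler pair $(K_1, K_2)$, subtracting its decomposition from the one indexed by $(K_1^{max}, K_2^{min})$ gives
\[
(1-z_1\bar w_1)(K_2 - K_2^{min}) = (1-z_2\bar w_2)(K_1^{max} - K_1),
\]
and subtracting from the decomposition indexed by $(K_1^{min}, K_2^{max})$ yields the symmetric identity
\[
(1-z_2\bar w_2)(K_1 - K_1^{min}) = (1-z_1\bar w_1)(K_2^{max} - K_2).
\]
These force the candidate $G_j$ to coincide with the meromorphic quotients appearing on either side of each identity. Adding the two representations of $G_1$ and $G_2$ then telescopes to $G_1 + G_2 = (K_1^{max} - K_1^{min})/(1-z_1\bar w_1) = G$ without further work.

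The one substantive point is to verify that these $G_j$, initially defined only as meromorphic quotients, are honest positive kernels. This is the main obstacle, and the step where the algebraic identities above cease to suffice. My plan is to exploit the scattering subspace framework of Section \ref{sect:construction}: the extremal Agler kernels $K_j^{min}, K_j^{max}$ arise as reproducing kernels of canonical orthogonal summands of the scattering subspace $\Kphi$, and I would show that an arbitrary Agler pair $(K_1, K_2)$ corresponds to a (not necessarily orthogonal) decomposition of $\Kphi$ compatible with the $Z_1, Z_2$ shift structure. Positivity of $G_j$ should then follow by identifying $K_j - K_j^{min}$ as the reproducing kernel of a subspace on which $M_{Z_j}$ acts contractively, from which positivity of the quotient by $1 - z_j\bar w_j$ is the standard characterization of shift-contractive reproducing kernel Hilbert spaces. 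This Hilbert space geometric input is where the real content of the theorem lies.
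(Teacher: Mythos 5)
Your algebraic framework matches the paper's: the reverse direction, the two cross-index identities forcing $G_1 = (K_1 - K_1^{min})/(1-z_1\bar w_1) = (K_2^{max}-K_2)/(1-z_2\bar w_2)$ and $G_2 = (K_2-K_2^{min})/(1-z_2\bar w_2) = (K_1^{max}-K_1)/(1-z_1\bar w_1)$, and the telescoping to $G_1+G_2=G$ are all correct and are exactly what the paper does. You also correctly isolate the positivity of the $G_j$ as the only substantive point. However there are two genuine problems with your plan for that point, and one small over-claim.

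First, the over-claim: in the reverse direction you say positivity of $K_1,K_2$ is ``automatic, since each is a sum of positive kernels.'' But $K_1 = K_1^{min} + (1-z_1\bar w_1)G_1$, and $(1-z_1\bar w_1)G_1$ is \emph{not} automatically a positive kernel: $1-z_j\bar w_j$ is not itself a positive kernel (its $2\times 2$ Gram matrices can have negative determinant), so Schur-multiplying a positive $G_1$ by it does not preserve positivity. The paper avoids this by simply taking positivity of $K_1,K_2$ as part of the hypothesis on the left-hand side of the equivalence; you have asserted something stronger without justification.

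Second and more importantly, for the forward direction you propose to re-derive positivity of $G_j$ from scratch by building a compatible decomposition of $\Kphi$ for an arbitrary Agler pair. But the paper has already packaged exactly that Hilbert-space work into Theorem~\ref{thm:mincontain}, which proves $H_j \subseteq H_j^{max}$ contractively for an \emph{arbitrary} Agler pair (via a weak-compactness argument in $\Kphi$), and then Theorem~\ref{thm:kerdiff} converts the contractive containment into the kernel inequality $\bigl(K_j^{max}-K_j\bigr)/(1-z_j\bar w_j)\geq 0$. Combined with your own cross-index identities, that inequality for $j=2$ is precisely $G_1\geq 0$ and for $j=1$ precisely $G_2\geq 0$, so the proof of Theorem~\ref{thm:maxmin} is short once those are cited. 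Your sketch also contains a logical reversal: shift-contractivity of $M_{Z_j}$ on a space $\mathcal{H}(L)$ is equivalent (Theorem~\ref{thm:kermult}) to positivity of $(1-z_j\bar w_j)L$, not to positivity of $L/(1-z_j\bar w_j)$; so ``identifying $K_j-K_j^{min}$ as the kernel of a shift-contractive space'' would go the wrong way. The correct mechanism for producing positivity of a quotient is contractive containment of the associated $H_j$ spaces, which is exactly what Theorems~\ref{thm:mincontain} and~\ref{thm:kerdiff} provide.
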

While Ball-Sadosky-Vinnikov
\cite{bsv05} proved the existence of analogous maximal and minimal
decompositions in the scattering subspace $\Kphi$, our contribution
here is to show that many of these extremality properties also hold
in the space of interest $\Hphi$.  
On the path to our regularity result, we obtain explicit
characterizations of the spaces $\mathcal{H}(K^{max}_j)$ and
$\mathcal{H}(K^{min}_j)$ and use those to show that all
$\mathcal{H}(K_1)$ and $\mathcal{H}(K_2)$ are contained inside ``small'',
easily-studied subspaces of $\Hphi$. Section \ref{sect:functions} has
the details.

In Section 5, we consider applications of this Agler kernel analysis.
When $\Phi$ is square matrix valued, the containments allow us to
characterize when $\Phi$ and the elements of $\mathcal{H}(K_1)$ 
and $\mathcal{H}(K_2)$ extend analytically past portions of $\partial \D^2$, thus
generalizing the regularity result of Nagy-Foias mentioned above.  A
key point is that $\Hphi$ is too big of a space for these
characterizations, and it really is necessary to study Agler kernels
to investigate the regularity of $\Phi$.

We now state the main regularity theorem found in Section
\ref{sect:extensions}.  Let $X \subseteq \mathbb{T}^2$ be an open set
and define the sets
\begin{align*} X_1 & := \left \{ x_1 \in \mathbb{T} : \text{ such that } \exists \ x_2 \text{ with } 
(x_1, x_2) \in X \right \} \\
 X_2 & := \left \{ x_2 \in \mathbb{T} : \text{ such that } \exists \ x_1 \text{ with } (x_1, x_2) \in X \right \} 
 \end{align*}
using $X$  and the sets $\mathbb{E} := \mathbb{C} \setminus \overline{\D}$ and $S := \left \{ 1 / \bar{z}: \det \Phi(z) = 0  \right\}.$
Then, we obtain the following result:
\begin{theorem*}[\ref{thm:extension}] Let $\Phi \in \mathcal{S}_2(E, E_*)$ 
be square matrix valued. Then the following are equivalent:
\begin{itemize}
\item[$(i)$] $\Phi$ extends continuously to 
$X$ and $\Phi$ is unitary valued on $X$.
\item[$(ii)$] There is some pair $(K_1,K_2)$ of 
Agler kernels of $\Phi$ such that the elements 
of $\mathcal{H}(K_1)$ and $\mathcal{H}(K_2)$ 
extend continuously to $X.$
\item[$(iii)$] There exists a domain $\Omega$ containing 
\beq \D^2 \cup X \cup (X_1 \times \D) \cup (\D \times X_2) 
 \cup (\mathbb{E}^2 \setminus S ) \eeq
such that $\Phi$ and the elements of $\mathcal{H}(K_1)$ 
and $\mathcal{H}(K_2)$ extend analytically to $\Omega$ 
for every pair $(K_1, K_2)$ of Agler kernels of $\Phi.$
Moreover the points in the set $\Omega$
are points of bounded evaluation of every $\mathcal{H}(K_1)$
 and $\mathcal{H}(K_2).$ 
\end{itemize}
\end{theorem*}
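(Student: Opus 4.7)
The plan is to dispatch the easy directions quickly and then attack $(i) \Rightarrow (iii)$ in three stages. Direction $(iii) \Rightarrow (ii)$ is immediate: an analytic extension to a domain $\Omega \supseteq X$ restricts to a continuous extension on $X$. For $(ii) \Rightarrow (i)$, the functions $K_j(\cdot, w)\eta$ lie in $\mathcal{H}(K_j)$, so the hypothesis gives continuous extension of $z \mapsto K_j(z,w)$ to $X$ for each fixed $w \in \D^2$. The Agler decomposition then transfers this to a continuous extension of $\Phi(z)\Phi(w)^*$, and since $\Phi$ is square matrix valued, invertibility of $\Phi(w)^*$ for generic $w \in \D^2$ (the alternative $\det \Phi \equiv 0$ forces $X = \emptyset$) yields a continuous extension of $\Phi$ to $X$. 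Evaluating on the diagonal $z = w \in X \subseteq \T^2$ annihilates both factors $(1 - z_j\bar w_j)$, forcing $I - \Phi(z)\Phi(z)^* = 0$ and hence unitarity.

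For $(i) \Rightarrow (iii)$, Stage 1 constructs the reflected function $\tilde{\Phi}(z) := \Phi(1/\bar z_1, 1/\bar z_2)^{-*}$, which is holomorphic on $\mathbb{E}^2 \setminus S$ precisely because $\Phi$ is square matrix valued. Continuous unitarity of $\Phi$ on $X$ makes $\Phi$ and $\tilde{\Phi}$ have matching continuous boundary values on $X$, so a local edge-of-the-wedge argument at each point of $X$ (which is a totally real piece of $\T^2$ in $\mathbb{C}^2$) produces a holomorphic continuation of $\Phi$ across $X$ to a $\mathbb{C}^2$-neighborhood of $X$. Gluing these local extensions with $\Phi|_{\D^2}$ and $\tilde{\Phi}|_{\mathbb{E}^2 \setminus S}$ yields a single holomorphic $\Phi$ on a domain $\Omega_0 \supseteq \D^2 \cup X \cup (\mathbb{E}^2 \setminus S)$.

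Stage 2 fills in the fiber pieces $X_1 \times \D$ and $\D \times X_2$, and this is the main obstacle. For $x_1 \in X_1$ with companion $x_2 \in \T$ such that $(x_1, x_2) \in X$, Stage 1 produces a $\mathbb{C}^2$-neighborhood $V$ of $(x_1, x_2)$ on which $\Phi$ is holomorphic. The pair $(\D^2, V)$ forms a Hartogs-type configuration, and by the Kontinuit\"atssatz---filling analytic discs based in $\D^2$ whose boundaries pass through the extended region $V$---its envelope of holomorphy contains $\{x_1\} \times \D$. Taking the union as $(x_1, x_2)$ ranges over $X$ produces the extension to $X_1 \times \D$, and symmetrically to $\D \times X_2$. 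This stage is delicate precisely because unitarity on the two-dimensional set $X \subseteq \T^2$ does not descend to a one-dimensional slice $\{x_1\} \times \T$, so a direct application of the classical one-variable Nagy--Foias reflection to $\Phi(x_1, \cdot)$ is unavailable, and the argument must route through two-variable propagation machinery.

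Stage 3 promotes the analytic extension of $\Phi$ to analytic extension of elements of $\mathcal{H}(K_1)$ and $\mathcal{H}(K_2)$. Using the explicit descriptions of these spaces from Section \ref{sect:functions} as subspaces of $\mathcal{H}_\Phi$ and the Agler decomposition formula, the kernel $K_j(z, w)$ inherits an analytic extension in both arguments to $\Omega \times \Omega$; in particular $K_j(z,z)$ remains bounded on $\Omega$. The reproducing kernel estimate $\|f(z)\|_{E_*}^2 \leq \|f\|_{\mathcal{H}(K_j)}^2 \, \|K_j(z,z)\|$ then yields bounded evaluation at every $z \in \Omega$, and the identity $\langle f(z), \eta\rangle_{E_*} = \langle f, K_j(\cdot, z)\eta\rangle_{\mathcal{H}(K_j)}$ combined with the holomorphic dependence of $K_j(\cdot, z)$ on $z$ produces the analytic continuation of each $f \in \mathcal{H}(K_j)$ to all of $\Omega$. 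Unioning over $j = 1, 2$ completes the proof.
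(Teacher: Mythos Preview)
Your overall architecture for $(i)\Rightarrow(iii)$ (reflect $\Phi$, glue via edge-of-the-wedge, then propagate across fibers by Hartogs/Kontinuit\"atssatz) matches the paper's Stages~1--2, which cite Rudin's edge-of-the-wedge theorem and Rudin's Theorem~4.9.1 for the fiber propagation; nothing is wrong there. The problems are in Stage~3 and in $(ii)\Rightarrow(i)$.

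\textbf{Stage 3 is the real gap.} The Agler identity determines only the combination $(1-z_1\bar w_1)K_2+(1-z_2\bar w_2)K_1$, so an analytic extension of $\Phi$ to $\Omega$ does \emph{not} yield separate analytic extensions of $K_1$ and $K_2$ to $\Omega\times\Omega$, and there is no reason for $K_j(z,z)$ to be bounded on the (noncompact) domain $\Omega$. Consequently the step ``reproducing identity $+$ holomorphic dependence of $K_j(\cdot,z)$ on $z$'' is not available: the reproducing formula $\langle f,K_j(\cdot,z)\eta\rangle$ holds a~priori only for $z\in\D^2$. The paper's route is genuinely different here. It uses the containment Theorem~\ref{thm:containment} to write each $f\in\mathcal{H}(K_j)$ as $f=\Phi\,g+(I-\Phi\Phi^*)^{1/2}h$ with $g\in Z_jL^2_{--}(E)$, defines $f$ on $\E^2\setminus S$ by the reflected formula $f(z)=\Phi(z)\,\overline{(\text{something in }H^2)}(1/\bar z)$, and then applies the \emph{distributional} edge-of-the-wedge theorem to each individual $f$. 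Bounded point evaluations on $\Omega$ come not from a global bound on $K_j(z,z)$ but from the uniform boundedness principle together with the fact that the edge-of-the-wedge and Rudin~4.9.1 constructions express extended values as integrals over compact subsets where evaluation is already bounded.

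\textbf{$(ii)\Rightarrow(i)$ has a circularity.} The assertion ``$\det\Phi\equiv 0$ forces $X=\emptyset$'' would follow from unitarity of $\Phi$ on $X$, which is exactly what you are trying to prove; under hypothesis $(ii)$ alone it is unjustified. Similarly, evaluating ``on the diagonal $z=w\in X$'' is not immediately meaningful: the hypothesis gives continuous extension of $z\mapsto K_j(z,w)$ only for fixed $w\in\D^2$, not a value $K_j(x,x)$ for $x\in\T^2$. The paper instead takes a sequence $z^n\to x\in X$, uses the uniform boundedness principle on the continuously-extending elements of $\mathcal{H}(K_j)$ to bound $\|K_j(z^n,z^n)\|$, deduces $(1-|z^n_j|^2)K_j(z^n,z^n)\to 0$, hence $I-\Phi(z^n)\Phi(z^n)^*\to 0$, and concludes that $\Phi(z^N)$ is invertible for some $N$. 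Only then does it use the Agler formula with that fixed $w=z^N$ to extend $\Phi$ continuously to $X$, and rerun the limit to get unitarity.
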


In Section \ref{sect:tfr}, we return to the setting of transfer function realizations. 
We use the canonical Agler kernels $(K^{max}_1, K^{min}_2)$ to 
construct a T.F.R. of $\Phi$ with refined properties. Specifically we prove:
\begin{theorem*}[\ref{thm:canonicalcmf}] Let $\Phi \in
  \mathcal{S}_2(E,E_*)$ and consider its Agler kernels $(K^{max}_1,
  K^{min}_2).$ Then, $\Phi$ has a coisometric transfer function
  realization with state space $\mathcal{H}(K^{min}_2) \oplus
  \mathcal{H}(K^{max}_1).$ \end{theorem*}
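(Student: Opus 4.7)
My plan is to construct the coisometric colligation via a lurking-isometry argument, tailored to the specific Agler decomposition
\[
 I_{E_*} - \Phi(z)\Phi(w)^* = (1-z_1\bar w_1) K_2^{min}(z,w) + (1-z_2\bar w_2) K_1^{max}(z,w)
\]
furnished by the pair $(K_1^{max},K_2^{min})$. I would take $\mathcal{M}_1 := \mathcal{H}(K_2^{min})$ and $\mathcal{M}_2 := \mathcal{H}(K_1^{max})$ so that $\mathcal{M} = \mathcal{M}_1 \oplus \mathcal{M}_2$ carries the grading $\mathcal{E}_z = z_1 P_{\mathcal{M}_1} + z_2 P_{\mathcal{M}_2}$ compatible with the pairing above.

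On the linear span of the vectors
\[
 u_{w,e} \ := \ \bar w_1 K_2^{min}(\cdot,w)e \ \oplus \ \bar w_2 K_1^{max}(\cdot,w)e \ \oplus \ e \ \in \ \mathcal{M}\oplus E_*,
\]
parametrized by $w\in\mathbb{D}^2$ and $e\in E_*$, I would define a linear map
\[
 V u_{w,e} \ := \ K_2^{min}(\cdot,w)e \ \oplus \ K_1^{max}(\cdot,w)e \ \oplus \ \Phi(w)^*e \ \in \ \mathcal{M}\oplus E.
\]
Comparing $\langle u_{w,e}, u_{w',e'}\rangle$ with $\langle V u_{w,e}, V u_{w',e'}\rangle$ and invoking the reproducing property reduces the required isometric identity for $V$ to exactly the displayed Agler decomposition; this is the first nontrivial step. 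Once $V$ is isometric on the span, I would extend it (adding an arbitrary isometric piece on the orthogonal complement, which is possible under the separability assumptions in force) to a genuine isometry $\widetilde V : \mathcal{M}\oplus E_* \to \mathcal{M}\oplus E$, and set $U := \widetilde V^*$. Then $UU^* = \widetilde V^*\widetilde V = I$, so $U$ is a coisometry $\mathcal{M}\oplus E \to \mathcal{M}\oplus E_*$ with the advertised state space.

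To finish, I would verify that the transfer function of $U$ is indeed $\Phi$. Writing $U = \begin{bmatrix} A & B \\ C & D\end{bmatrix}$ and $h(w)e := (K_2^{min}(\cdot,w)e,\,K_1^{max}(\cdot,w)e) \in \mathcal{M}$, the defining identity $\widetilde V(\mathcal{E}_w^* h(w)e,\,e) = (h(w)e,\,\Phi(w)^* e)$ unpacks to
\[
 A^*\mathcal{E}_w^* h(w)e + C^*e = h(w)e, \qquad B^*\mathcal{E}_w^* h(w)e + D^*e = \Phi(w)^*e.
\]
Solving the first for $h(w)e = (I - A^*\mathcal{E}_w^*)^{-1}C^* e$ and substituting into the second yields $\Phi(w)^* = D^* + B^*\mathcal{E}_w^*(I-A^*\mathcal{E}_w^*)^{-1}C^*$; taking adjoints gives $\Phi(z) = D + C(I - \mathcal{E}_z A)^{-1}\mathcal{E}_z B$, as required.

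The hard part, as I see it, is controlling the isometric-extension step so that the state space is truly $\mathcal{H}(K_2^{min})\oplus \mathcal{H}(K_1^{max})$ and not something larger. The isometric identity for $V$ on the span is a clean consequence of the Agler decomposition, and the transfer-function verification is routine algebra; but the extension of $V$ to all of $\mathcal{M}\oplus E_*$ preserves the state space cleanly only if the orthogonal complement of $\overline{\operatorname{span}}\{u_{w,e}\}$ is controllable. This is exactly where I expect the extremality of the pair $(K_1^{max}, K_2^{min})$ established in Section 4 to enter: the minimality of $K_2^{min}$ and maximality of $K_1^{max}$ should rule out nontrivial obstructions of the form $(m_1, m_2) \in \mathcal{H}(K_2^{min})\oplus \mathcal{H}(K_1^{max})$ with $z_1 m_1(z) + z_2 m_2(z) \equiv 0$, which are precisely the vectors that would otherwise sit in the orthogonal complement of the span.
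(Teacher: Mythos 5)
Your setup is exactly the paper's: you define the same lurking isometry $V$ on the span of the vectors $u_{w,e}$, observe that the Agler decomposition for $(K_1^{max}, K_2^{min})$ makes $V$ isometric, and reduce the transfer-function check to routine algebra. You also correctly pinpoint the crux: one must show that the orthogonal complement of $\overline{\operatorname{span}}\{u_{w,e}\}$ in $\mathcal{H}(K_2^{min})\oplus\mathcal{H}(K_1^{max})\oplus E_*$ is trivial, i.e.\ there are no nonzero $(m_1,m_2)\in\mathcal{H}(K_2^{min})\oplus\mathcal{H}(K_1^{max})$ with $z_1 m_1(z)+z_2 m_2(z)\equiv 0$. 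But you do not prove this; you only say the extremality of the pair ``should rule out'' such obstructions. That claim is the entire content of the theorem (the lurking-isometry scaffolding is standard and is already spelled out in the paper's Remark~\ref{rem:contfr}), so as written the proposal has a genuine gap at precisely the step you flag as hard.

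To fill it, the paper exploits the structural description of $\mathcal{H}(K_1^{max})$ developed in Section~4: an element $f_1\in\mathcal{H}(K_1^{max})$ is the first component of some $\begin{bmatrix} f_1 \\ g_1\end{bmatrix}\in S_1^{max}\ominus Z_1 S_1^{max}=\mcR_1\ominus Z_1\mcR$ (Proposition~\ref{prop:smax}), with $g_1\in Z_1 L^2_{--}(E)$. The overlap relation $z_1 m_1+z_2 m_2\equiv 0$ forces $f_1=Z_1 F$ for some $F\in H^2(E_*)$; pairing this with $g_1=Z_1 G$ and checking $G-\Phi^*F\in\Delta L^2(E)$ shows $\begin{bmatrix} f_1\\ g_1\end{bmatrix}=Z_1\begin{bmatrix} F\\ G\end{bmatrix}$ with $\begin{bmatrix} F\\ G\end{bmatrix}\in\mcR$, i.e.\ the vector lies in $Z_1\mcR$ while simultaneously being orthogonal to $Z_1\mcR$. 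Hence $f_1=0$, and then $f_2=0$. Without this (or an equivalent) argument from the characterization of $K_1^{max}$, your extension step is not justified --- and in fact your parenthetical assertion that an isometric extension $\widetilde V:\mathcal{M}\oplus E_*\to\mathcal{M}\oplus E$ exists ``under the separability assumptions in force'' is itself not automatic; in general such an extension may require enlarging the state space, which is exactly what the density of $\mathcal{D}_V$ is there to prevent.
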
 

This construction answers a question posed by Ball and Bolotnikov in \cite{bb11}.  
We also obtain additional information about the block operators $A, B, C,$ and $D$ of the
associated coisometric colligation $U$.  In Section \ref{sect:opkernels}, we provide an
appendix outlining results concerning operator valued reproducing
kernels used in the paper. We supply the commonly used symbols and
table of contents below for convenience.

\newpage

\renewcommand{\nomname}{List of Symbols}
\printnomenclature[1in]


\tableofcontents

\section{Decompositions of Scattering Subspaces} \label{sect:scattering}

For brevity, this paper only outlines the structure of particular
scattering systems defined for $\Phi \in \SEE$. 
Many details of these scattering systems also appear in 
\cite{bsv05} and \cite{bkvsv}.
For a review of the general theory of one- and multi-evolution scattering
systems, see \cite{bsv05}.

\subsection{Notation and Operator Ranges}

Before proceeding to scattering systems, we require some notation.
Let $E$ be a Hilbert space. Then $L^2(E):= L^2(\mathbb{T}^2) \otimes
E$, i.e.  the space of $E$ valued functions on $\T^2$ with square
summable Fourier coefficients. Similarly, $H^2(E) := H^2(\D^2) \otimes
E$ denotes the space of $E$ valued holomorphic functions on $\D^2$
whose Taylor coefficients around zero are square summable. Recall that $Z_1,
Z_2$ denote the coordinate functions $Z_j(z_1,z_2) = z_j$.  We will
define some standard subspaces of $L^2(E)$ according to their Fourier
series support. Let $\ZZ_+ = \{0,1,2,\dots\}$ and $\ZZ_{-} =
\{-1,-2,-3,\dots\}$.  If $N\subset \ZZ^2$ and $f \in L^2(E)$, the
statement $\supp(\hat{f}) \subset N$ means $\hat{f}(n_1,n_2) = 0$ for
$(n_1,n_2) \not \in N$.  Now define
\[
\begin{aligned} 
L^2_{++}(E) &:= \{f \in L^2(E): \supp(\hat{f}) \subset \ZZ_+ \times \ZZ_+ \}\\
L^2_{+\bullet}(E) &:= \{f \in L^2(E): \supp(\hat{f}) \subset \ZZ_+ \times \ZZ \} \\
L^2_{-\bullet}(E) &:= \{f \in L^2(E): \supp(\hat{f}) \subset \ZZ_{-} \times \ZZ\} \\
L^2_{+-}(E) &:= \{f \in L^2(E): \supp(\hat{f}) \subset \ZZ_{+} \times \ZZ_{-}\} \\
L^2_{--}(E) &:= \{f \in L^2(E): \supp(\hat{f}) \subset \ZZ_{-} \times \ZZ_{-}\},
\end{aligned}
\]
and similarly one can define $L^2_{\bullet +}(E),$ $L^2_{\bullet
  -}(E)$, and $L^2_{-+} (E).$ It is well-known that associating an
$H^2(E)$ function $f$ with the $L^2$ function whose Fourier
coefficients agree with the Taylor coefficients of $f$ maps $f$
unitarily to its radial boundary value function in $L^2_{++}(E).$ We
will denote both the function in $H^2$ and the associated function in
$L^2_{++}$ by $f$.

We also require the following definition and simple lemma about
operator ranges; for more details, see the first chapter of
\cite{sar94}.

\begin{defn} Let $\mathcal{K}$ be a Hilbert space and let  $T: \mathcal{K} \rightarrow \mathcal{K}$
 be a bounded linear operator on $\mathcal{K}$. Then the
 \emph{operator range} of T, denoted $\mathcal{M}(T)$,
 \nomenclature{$\mathcal{M}(T)$}{operator range}
 is the Hilbert space consisting of elements in the image of $T$ endowed with the inner product
\[
 \LL T x, Ty \RR_{\mathcal{M}(T)} := \LL P_{(\ker T)^{\perp} }  x,  y \RR_{\mathcal{K}} 
\qquad \forall \ x,y \in \mathcal{K}.
\]
\end{defn}

\begin{lem} \label{lem:oprange} Let $\mathcal{K}$ be a Hilbert space and let  $T: \mathcal{K} 
\rightarrow \mathcal{K}$ be a bounded linear self-adjoint operator on $\mathcal{K}$. 
Then the operator range $\mathcal{M}(T)$ is the closure of the image of $T^2$ in the 
$\mathcal{M}(T)$ norm and $\LL T x, T^2 y \RR_{\mathcal{M}(T)} = \LL T x, y 
\RR_{\mathcal{K}},$ for all $x,y \in \mathcal{K}.$ 
\end{lem}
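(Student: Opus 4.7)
The plan is to first establish the displayed inner product identity, and then use it to get density of $T^2(\mathcal{K})$ in $\mathcal{M}(T)$ by a standard orthogonality argument.

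First I would record the key consequence of self-adjointness: since $T = T^*$, we have $\ker T = (\text{range } T)^\perp$, hence $(\ker T)^\perp = \overline{\text{range } T}$. In particular, $Tu$ lies in $(\ker T)^\perp$ for every $u \in \mathcal{K}$, so $P_{(\ker T)^\perp} T = T$. Everything that follows is just unwinding definitions with this identity in mind.

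For the inner product formula, note that $T^2 y = T(Ty)$ does lie in $\mathcal{M}(T)$, so the pairing $\LL Tx, T^2 y \RR_{\mathcal{M}(T)}$ makes sense. By definition of the $\mathcal{M}(T)$ inner product applied to the representations $Tx$ and $T(Ty)$,
\[
\LL Tx, T^2 y \RR_{\mathcal{M}(T)} = \LL P_{(\ker T)^\perp} x, Ty \RR_{\mathcal{K}}.
\]
Moving $T$ across using self-adjointness and then applying $P_{(\ker T)^\perp} T = T$ on the left yields
\[
\LL P_{(\ker T)^\perp} x, Ty \RR_{\mathcal{K}} = \LL T P_{(\ker T)^\perp} x, y \RR_{\mathcal{K}} = \LL Tx, y \RR_{\mathcal{K}},
\]
giving the second claim.

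For the density statement, suppose $Tx \in \mathcal{M}(T)$ is orthogonal (in the $\mathcal{M}(T)$ norm) to every element $T^2 y$ with $y \in \mathcal{K}$. By the identity just established, $\LL Tx, y \RR_{\mathcal{K}} = 0$ for all $y \in \mathcal{K}$, which forces $Tx = 0$ in $\mathcal{K}$ and hence as an element of $\mathcal{M}(T)$. Since $\mathcal{M}(T)$ is a Hilbert space (being isometrically isomorphic to $(\ker T)^\perp$ via $Tx \leftrightarrow P_{(\ker T)^\perp} x$), the closed span of $T^2(\mathcal{K})$ in the $\mathcal{M}(T)$ norm coincides with the orthogonal complement of its annihilator, which we have just shown is trivial. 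Thus $T^2(\mathcal{K})$ is dense in $\mathcal{M}(T)$. No single step here looks like a serious obstacle; the only point requiring care is keeping track of which inner product is being used at each stage and remembering that $P_{(\ker T)^\perp}$ acts as the identity on elements of the form $Tu$.
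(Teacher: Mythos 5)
Your proof is correct and follows essentially the same argument as the paper, with a minor reorganization: you establish the inner-product identity first and then use it directly to deduce density, whereas the paper proves density first (by a computation that amounts to the same thing, choosing $x \in (\ker T)^\perp$ with $Tx = \eta$ so the projection drops out) and then proves the identity. Your ordering is arguably cleaner since it avoids running the same calculation twice.
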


\begin{proof} We show that if  $\eta \in \mathcal{M}(T)$ and $\eta \perp T^2 \mathcal{K}$,
 then $\eta \equiv 0.$ Fix such an $\eta$ and choose $x \in ( \ker T )^{\perp}$ such that $Tx 
=\eta.$ Then, for each $y \in \mathcal{K}$, 
\[ 0 = \LL \eta, T^2 y \RR_{\mathcal{M}(T)} = \LL x, T y \RR_{\mathcal{K}} =
 \LL Tx, y \RR_{\mathcal{K}} =\LL \eta, y \RR_{\mathcal{K}}  ,\]
which implies $\eta \equiv 0.$ Moreover, for any $x,y \in \mathcal{K}$, 
\[ \LL T x, T^2 y \RR_{\mathcal{M}(T)} = \LL P_{(ker T)^{\perp}} x,  Ty  \RR_{\mathcal{K}} 
= \LL  T P_{(ker T)^{\perp}} x,  y  \RR_{\mathcal{K}} = \LL T x, y \RR_{\mathcal{K}}, \]
as desired. \end{proof}

\begin{ex} \label{ex:hphi} Let $\Phi \in \SEE$. The two-variable de Branges-Rovnyak space 
$\Hphi$ is also the operator range of the bounded linear self adjoint operator
\[ \Dphi : =( I - \Phi P_{H^2(E)} \Phi^*)^{1/2}: H^2(E_*) \rightarrow
H^2(E_*). \nomenclature{$\Dphi$}{The operator $(I - \Phi P_{H^2(E)} \Phi^*)^{1/2}$}\]
To see this notice first that by Lemma \ref{lem:oprange},
$\Dphi ^2 H^2(E_*)$ is dense in $\mathcal{M}(\Dphi)$ and 
\[ \LL \Dphi f, \Dphi^2 g
\RR_{\mathcal{M}(\Dphi)} = \LL \Dphi f, g \RR_{H^2(E_*)}\] 
for all $f,g \in
H^2(E_*).$ Let $k_z$ be the Szeg\H{o} kernel on the bidisk.  Then, the
reproducing kernel of $H^2(E_{*})$ is $k_z\otimes I_{E_*}$.  Given $f
\in \mathcal{M}(\Dphi)$, $z \in \D^2, v \in E_{*}$, we see that
\[
\LL f, \Dphi^2 k_z v \RR_{\mathcal{M}(\Dphi)} = \LL f, k_z v \RR_{H^2(E_{*})} = \LL
f(z), v \RR_{E_{*}}
\]
and therefore the operator range of $\Dphi$ is a reproducing kernel
Hilbert space on $\D^2$ with reproducing kernel
\[
\frac{I-\Phi(z)\Phi(w)^*}{(1-z_1\bar{w}_1)(1-z_2\bar{w}_2)}.
\]
Specifically, $\mathcal{M}(\Dphi)$ is equal to the de Branges-Rovnyak space associated to
$\Phi$, which is $\Hphi.$  This follows from the standard identity for reproducing
kernels $P_{H^2} \Phi^* k_z v = \Phi(z)^* k_z v$ and the computation
$\Dphi^2 k_z v = (I-\Phi P_{H^2} \Phi^*) k_z v = k_z v -\Phi
\Phi(z)^*k_z v$.
 \end{ex}

The following consequence of Douglas's lemma \cite{do66} is found on
page 3 of \cite{sar94}.
 
\begin{lem} \label{lem:douglas}
Let $\mcK$ be a Hilbert space and let $A:\mcK \to \mcK, B:\mcK \to
\mcK$ be bounded linear operators.  Then, $\mathcal{M}(A) =
\mathcal{M}(B)$ if and only if $AA^* = BB^*$.  
\end{lem}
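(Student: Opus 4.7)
\noindent The plan is to handle the two implications separately. The forward direction is a one-line adjoint computation, while the converse uses a Douglas-style partial isometry.

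For the forward direction, suppose $\mathcal{M}(A) = \mathcal{M}(B)$ as Hilbert spaces. I would consider the natural inclusion map $\iota : \mathcal{M}(A) \hookrightarrow \mcK$, which is bounded because $A$ maps $(\ker A)^\perp$ isometrically onto $\mathcal{M}(A)$ by definition of the $\mathcal{M}(A)$-norm. Computing the adjoint $\iota^* : \mcK \to \mathcal{M}(A)$ directly: for $x \in \mcK$ and $y = Az \in \mathcal{M}(A)$ with $z \in (\ker A)^\perp$,
\[
\LL \iota^* x , y \RR_{\mathcal{M}(A)} = \LL x, Az \RR_{\mcK} = \LL A^* x, z \RR_{\mcK} = \LL AA^* x, Az \RR_{\mathcal{M}(A)},
\]
where the last equality uses the defining inner product of $\mathcal{M}(A)$ together with the fact that $A^* x \in (\ker A)^\perp$. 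Hence $\iota^* x = AA^* x$ as an element of $\mathcal{M}(A)$, and therefore $\iota \iota^* = AA^*$ as operators on $\mcK$. Performing the identical computation with $B$ in place of $A$ yields $\iota \iota^* = BB^*$, since the hypothesis $\mathcal{M}(A) = \mathcal{M}(B)$ makes the two inclusion maps into $\mcK$ coincide. So $AA^* = BB^*$.

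For the converse, suppose $AA^* = BB^*$. Then $\|A^*x\|^2 = \LL AA^*x, x \RR_{\mcK} = \LL BB^*x, x \RR_{\mcK} = \|B^*x\|^2$ for every $x \in \mcK$. I would define $W_0 : \mathrm{range}(B^*) \to \mathrm{range}(A^*)$ by $W_0(B^*x) = A^*x$; the norm equality makes $W_0$ well defined (if $B^*x = B^*y$, then $\|A^*(x-y)\| = \|B^*(x-y)\| = 0$) and isometric. Extending by continuity to $(\ker B)^\perp = \overline{\mathrm{range}(B^*)}$ and by zero on $\ker B$ produces a partial isometry $W$ on $\mcK$ satisfying $WB^* = A^*$, equivalently $A = BW^*$. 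Its adjoint $W^*$ is a partial isometry with initial space $(\ker A)^\perp$ and final space contained in $(\ker B)^\perp$. Given $y = Ax \in \mathcal{M}(A)$ with $x \in (\ker A)^\perp$, I then rewrite $y = B(W^*x)$ with $W^*x \in (\ker B)^\perp$ and obtain
\[
\|y\|^2_{\mathcal{M}(B)} = \|W^*x\|^2_{\mcK} = \|x\|^2_{\mcK} = \|y\|^2_{\mathcal{M}(A)}.
\]
Thus $\mathcal{M}(A)$ sits isometrically inside $\mathcal{M}(B)$, and the symmetric argument yields the reverse inclusion.

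The only delicate point is the bookkeeping for the partial isometry $W^*$: one must arrange its initial and final spaces so that $W^*x \in (\ker B)^\perp$ is guaranteed, which is exactly what forces the two operator-range norms to match rather than merely be equivalent. Once this is set up, both directions are essentially formal.
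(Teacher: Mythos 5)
Your proof is correct, but a point of context: the paper does not actually prove this lemma; it cites it as a known consequence of Douglas's lemma, found on page 3 of Sarason's book \cite{sar94}, so there is no in-paper argument to compare against. Your argument is essentially the standard Douglas-lemma proof specialized to the self-adjoint comparison $AA^*$ versus $BB^*$: the forward direction correctly identifies $\iota\iota^* = AA^*$ via the defining inner product of $\mathcal{M}(A)$ (using that $A^*x$ already lies in $(\ker A)^\perp$ so the projection in the definition of the operator-range inner product drops out), and the converse correctly builds the partial isometry $W$ from $\|A^*x\| = \|B^*x\|$ and tracks initial/final spaces so that $W^*$ carries $(\ker A)^\perp$ into $(\ker B)^\perp$, which is exactly what makes the two operator-range norms agree rather than merely be equivalent. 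One thing worth making explicit, which you implicitly rely on, is that ``$\mathcal{M}(A) = \mathcal{M}(B)$'' in the lemma means equality as Hilbert spaces (same elements and same inner product), not merely equality as sets; the forward direction needs this to identify the two inclusion maps. With that understood, both directions are complete and this matches the argument one finds in \cite{sar94}.
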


\subsection{The de Branges-Rovynak Models}

Now we proceed to scattering systems: 

\begin{defn} A \emph{two-evolution scattering system} $\mathcal{S} = (\scrH, \mathcal{U}_1, 
\mathcal{U}_2, \mathcal{F}, \mathcal{F}_*)$ consists of a Hilbert space $
\scrH$, two unitary operators $\mathcal{U}_1$, $\mathcal{U}_2: \scrH \rightarrow 
\scrH,$ and two wandering subspaces $\mathcal{F}, \mathcal{F}_* \subseteq 
\scrH$ of $\mathcal{U}_1$ and $\mathcal{U}_2$, i.e.
\[ 
\mathcal{F} \perp \U_1^{n_1} \U_2^{n_2} \F \ \ \text{ and }
 \ \ \ \mathcal{F}_* \perp \U_1^{n_1} \U_2^{n_2} \F_* \ \qquad \forall \ (n_1, n_2) \in \mathbb{Z}^2 
\setminus(0,0).
\]
 \end{defn}

Given any $\Phi  \in \SEE$, one can define the de Branges-Rovnyak model for $\Phi$.  
This is a concrete transcription of the (almost) unique minimal scattering 
system whose scattering function coincides with $\Phi.$  See \cite{bsv05} for the proof and additional theory. 

\begin{defn}The \emph{de Branges-Rovnyak model for $\Phi \in \SEE$} consists of the operator 
range, denoted $\scrH$, \nomenclature{$\scrH$}{de Branges-Rovnyak
  model for $\Phi$} of the following bounded linear self-adjoint operator:
\[
\left[ \begin{array}{cc} I & \Phi \\
\Phi^*& I \end{array} \right]^{1/2} :
\left[ \begin{array}{c}
L^2(E_*) \\
L^2(E) \end{array} \right] \rightarrow \left[ \begin{array}{c}
L^2(E_*) \\
L^2(E) \end{array} \right]. 
\]
Then $\scrH$ has inner product given by
\[
\left \langle  \left[ \begin{array}{cc} I & \Phi \\
\Phi^*& I \end{array} \right]^{1/2} 
\left[ \begin{array}{c}
f \\
g \end{array} \right] ,
 \left[ \begin{array}{cc} I & \Phi \\
\Phi^*& I \end{array} \right]^{1/2}
\left[ \begin{array}{c}
f' \\
g' \end{array} \right] \right \rangle_{\scrH} 
:= \left \langle P_{Q^{\perp}}  \left[ \begin{array}{c}
f \\
g \end{array} \right], \left[ \begin{array}{c}
f' \\
g' \end{array} \right] \right \rangle_{L^2(E_*) \oplus L^2(E)},
\]
where $Q = \ker  \left[ \begin{array}{cc} I & \Phi \\
\Phi^*& I \end{array} \right]^{1/2}.$ Lemma \ref{lem:oprange} 
implies the image of the operator $\op$ is dense in $\scrH$ 
and that
\[ 
\left \langle  \vecf, \op \vecft \right \rangle_{\scrH} 
= \left \langle  \vecf, \vecft  \right \rangle_{L^2(E_*) \oplus L^2(E)},\qquad \forall 
\ \vecf \in \scrH.
\]
The de Branges-Rovnyak model also contains the following two 
subspaces of $\scrH$:
\[ 
\F: = \left[ \begin{array} {c}
\Phi \\
I \end{array} \right] E =  \left[ \begin{array}{cc} I & \Phi \\
\Phi^*& I \end{array} \right] 
\left[ \begin{array}{c} 0 \\ E \end{array} \right]
\ \ \text{ and } \ \ 
\F_*: = \left[ \begin{array} {c}
I \\
\Phi^* \end{array} \right] E_* =  \left[ \begin{array}{cc} I & \Phi \\
\Phi^*& I \end{array} \right] 
\left[ \begin{array}{c} E_* \\  0 \end{array} \right]\]
and the two operators $\U_1, \U_2: \scrH \rightarrow \scrH$ 
defined by
\[
 \U_j := \left[ \begin{array}{cc}
Z_j I_{E_*} & 0 \\
0 & Z_j I_E \end{array} \right] \qquad \text{ for } j=1,2. 
\]
Each $\U_j$ is onto since
\[ \U_j \op^{1/2} = \op^{1/2} \U_j \ \ \text{ and }
 \ \ \U_j \Big( L^2(E_*) \oplus L^2(E) \Big)  = L^2(E_*) \oplus L^2(E). 
 \]
To see that $\U_j$ is isometric, observe that $\U_j$ 
preserves the $\scrH$ norm on the image of $\op$ since:
\begin{align*} \Bigg \| \U_j \op \vecf \Bigg \|^2_{\scrH} &
= \LL \U_j \op \vecf, \U_j \vecf \RR_{L^2(E_*) \oplus L^2(E)} \\
&= \LL  \op 
\begin{bmatrix}
Z_j f \\
Z_j g \end{bmatrix} ,  
\begin{bmatrix}
Z_j f \\
Z_j g \end{bmatrix} \RR_{L^2(E_*) \oplus L^2(E)} \\
& = \Bigg \|  \op \vecf \Bigg \|^2_{\scrH}. 
\end{align*}
Since said image is dense in $\scrH$, each $\U_j$ is unitary. Observe that $\F$ 
is \emph{wandering} for $\U_1$ and $\U_2$ since if $\eta, \nu \in E$ and $(n_1,n_2) \ne (0,0)$, then
\[
\begin{aligned} \LL \vecphi \eta, \ \U_1^{n_1} \U_2^{n_2} \vecphi \nu \RR_{\scrH}  &= \LL \op 
\left[ \begin{array}{c} 0 \\ \eta  \end{array} \right], \op \left[ \begin{array}{c} 
0 \\ Z_1^{n_1}Z_2^{n_2} \nu  \end{array} \right] \RR_{\scrH} \\
& \\
&= \LL  \op \left[ \begin{array}{c} 0 \\ \eta  \end{array} \right], 
\left[ \begin{array}{c} 0 \\ Z_1^{n_1}Z_2^{n_2} \nu  \end{array} \right] \RR_{L^2(E_*) \oplus L^2(E)} \\
&\\ 
& =  \LL \eta, \  Z_1^{n_1}Z_2^{n_2} \nu \RR_{L^2(E)},
\end{aligned}
\]
which is zero. Analogous arguments show $\F_*$ is wandering. We will usually just write $\U_j = Z_j$, 
unless we wish to emphasize the connection to scattering systems.
\end{defn}

The following remarks detail additional facts about $\scrH$. 

\begin{rem} \label{rem:hspace} \textbf{Alternate Characterization of $\scrH$.} 
Define the bounded linear self-adjoint operators 
\[
\begin{aligned}  \Delta: &= (I - \Phi^*\Phi)^{1/2}: L^2(E) \rightarrow L^2(E) \\
 \Delta_*: &= (I - \Phi \Phi^*)^{1/2}: L^2(E_*) \rightarrow L^2(E_*). \end{aligned}
 \nomenclature{$\Delta, \Delta_{*}$}{ $(I-\Phi^*\Phi)^{1/2}, (I-\Phi\Phi^*)^{1/2}$} 
 \]
By Lemma \ref{lem:douglas}, the factorizations
\[
\begin{bmatrix} I & \Phi \\ \Phi^* &
  I \end{bmatrix}^{1/2} \begin{bmatrix} I & \Phi \\ \Phi^* &
  I \end{bmatrix}^{1/2} =  \begin{bmatrix} I & 0 \\ \Phi^* &
  \Delta \end{bmatrix} \begin{bmatrix} I & \Phi \\ 0 &
  \Delta \end{bmatrix}
 = \begin{bmatrix} \Delta_{*} & \Phi \\ 0 &
   I \end{bmatrix} \begin{bmatrix} \Delta_{*} & 0 \\ \Phi^{*} &
   I \end{bmatrix}
\]
show that
\begin{align}
\scrH &= \mathcal{M} \left( \begin{bmatrix} I & 0 \\ \Phi^* &
  \Delta \end{bmatrix} \right) = \left\{\begin{bmatrix} f \\ g \end{bmatrix}:
    f\in L^2(E_*), g \in L^2(E), g- \Phi^* f \in \Delta L^2(E)
  \right\} \label{eqn:hphichar} \\
&= \mathcal{M} \left( \begin{bmatrix} \Delta_{*} & \Phi \\ 0 &
   I \end{bmatrix} \right) = \left\{\begin{bmatrix} f \\ g \end{bmatrix}:
    f\in L^2(E_*), g \in L^2(E), f- \Phi g \in \Delta_{*} L^2(E_{*})
  \right\}.
\end{align}
where the equality is on the level of Hilbert spaces, not just as
sets.These characterizations of $\scrH$ can be used 
to show that the linear maps 
\beq \vecf \mapsto f \text{ and } \vecf \mapsto g \eeq
are contractive operators from $\scrH$ onto $L^2(E_*)$ and 
$L^2(E)$ respectively. To see this, note that for each element 
in $\scrH$, there is an $h \in L^2(E)$ such that 
\[ \vecf = 
 \begin{bmatrix} I & 0 \\ \Phi^* &
  \Delta \end{bmatrix}
\begin{bmatrix}
f \\
h \end{bmatrix}, \text{ where } 
\begin{bmatrix}
f \\
h \end{bmatrix} \perp \ker
\begin{bmatrix} I & 0 \\ \Phi^* &
  \Delta \end{bmatrix}.
\]
Since $\scrH$ and the operator range of $ \begin{bmatrix} 
I & 0 \\ \Phi^* & \Delta \end{bmatrix}$ coincide as Hilbert spaces, 
\begin{equation} \label{eqn:fnorm}
\left \| \vecf \right \|^2_{\scrH} = \| f \|^2_{L^2(E_*)} + \|h \|^2_{L^2(E)} 
\ge \| f \|^2_{L^2(E_*)}. \end{equation}
Similarly, the equality between $\scrH$ and the operator range of 
$\begin{bmatrix} \Delta_{*} & \Phi \\ 0 &
   I \end{bmatrix}$  shows that for each element of $\mathscr{H}$, 
\begin{equation} \label{eqn:gnorm}  \| g \|_{L^2(E)} \le \left \| 
\vecf \right \|_{\scrH}. \end{equation}
\end{rem}

The following remark discusses additional subspaces 
of $\scrH$ that are important for the structure of the 
scattering system: 

\begin{rem} \label{rem:kphi} \textbf{The Scattering Subspace $\Kphi.$}
  The incoming subspace $\W_*$ and outgoing subspace $\W$
  \nomenclature{$\W_*, \W$}{Incoming and outgoing subspaces} of the de
  Branges-Rovnyak model are defined as follows:
\[
\begin{aligned}
 \W_*&:= \bigoplus_{n \in \ZZ^2 \setminus \mathbb{Z}^2_+} 
\U_1^{n_1} \U_2^{n_2} \F_*  = \vecphis L^2 \ominus H^2(E_*) \\
 \W &: = \bigoplus_{n \in \mathbb{Z}^2_+} \U_1^{n_1} \U_2^{n_2} \ \F = 
 \vecphi H^2(E). \end{aligned}
\]
An easy calculation shows $\W \perp  \W_*$ in $\scrH$. This means 
$\scrH$ decomposes as 
\[ 
\scrH = \W_* \oplus \Kphi \oplus \W, \]
where $\Kphi := \scrH \ominus (\W \oplus \W_*)$ is called the 
\emph{scattering subspace}. \nomenclature{$\Kphi$}{the scattering
  subspace of $\Phi$} A simple computation shows that 
\[ \vecf \perp \W_* \text{ iff }  f \in H^2(E_*) \text{ and } \vecf \perp \W 
 \text{ iff } g \in L^2\ominus H^2(E).
 \]
This means that the scattering subspace
\[
\begin{aligned} \Kphi &: = \scrH \ominus ( \W \oplus \W_*) \\ 
& = \left \{ \vecf \in \scrH : f \in H^2(E_*), \ g \in L^2\ominus H^2(E) \right \}.
\end{aligned}
\]
Using the alternate characterizations of $\scrH$  from Remark 
\ref{rem:hspace}, it follows that
\[
\begin{aligned} \Kphi &  = \left \{ \vecf :  f \in H^2(E_*), \ g \in L^2\ominus H^2(E)
, \ g-\Phi^*f \in \Delta L^2(E) \right \} \\
& = \left \{ \vecf :  f \in H^2(E_*), \ g \in L^2\ominus H^2(E), \  f- \Phi g \in 
\Delta_* L^2(E_*) \right \}.
\end{aligned}
\]
The following operator gives the orthogonal projection onto $\Kphi:$
\[
 P_{\Phi} : = 
\left[ \begin{array}{cc}
P_+ & -\Phi P_+ \\
-\Phi^*P_{-} & P_{-} \end{array} \right],
\] \nomenclature{$P_{\Phi}$}{Projection onto $\Kphi$}
where $P_+ = P_{H^2}$, and 
$P_{-} =P_{L^2 \ominus H^2}$, \nomenclature{$P_{+}, P_{-}$}{Projection onto
  $H^2, L^2\ominus H^2$} for either $L^2 \ominus H^2(E)$ or 
$L^2 \ominus H^2(E_*).$ It is easy to check  that $P_{\Phi}^2 = 
P_{\Phi}$, $P_{\Phi}|_{\Kphi} \equiv I$ and $ P_{\Phi}|_{\W \oplus 
\W_*} \equiv 0.$ 
\end{rem}  

\begin{rem}[Inner functions]
  When $\Phi$ is an inner function, namely when $\Phi^*\Phi = I,
  \Phi\Phi^* = I$ a.e.~on $\T^2$, the above machinery simplifies
  significantly and scattering systems are not really necessary.  In
  this case, $\Delta = 0, \Delta_{*} = 0$, so that 
  \[ \Kphi =
  \left\{ \begin{bmatrix} f \\ \Phi^* f \end{bmatrix} : f \in
    H^2(E_*), \Phi^*f \in L^2\ominus H^2(E) \right\}. \]
   Evidently, the
  first component in this space is $f \in H^2(E_{*})$ such that
  $\Phi^*f \in L^2\ominus H^2(E)$.  This is equivalent to saying $f
  \in H^2(E_{*}) \ominus \Phi H^2(E)$.  This space is the usual model
  space associated to the inner function $\Phi$; it is studied in
  \cite{bsv05} and is studied in great depth in \cite{bk12}.  Although
  in this paper we recover many results from \cite{bk12}, there are
  many results related to rational inner functions in \cite{bk12} that
  we do not mention here. In general, the paper \cite{bk12} is
  a more accessible introduction to the present material.
\end{rem}

\subsection{Decompositions of $\Kphi$}

In \cite[Theorem 5.5]{bsv05}, Ball-Sadosky-Vinnikov prove 
the following canonical decomposition of $\Kphi.$ For completeness, 
we include a simple proof here as well.

\begin{thm} \label{thm:kdecomp} Define these subspaces of 
the scattering subspace $\Kphi$:
\[
\begin{aligned}
 S^{max}_1 &= \left \{ \vecf \in \Kphi: Z_1^k \vecf \in \Kphi \ \forall \ 
 k \in \NN \right \} \ 
 S^{min}_1 = \text{closure }P_{\Phi} \vecphi L^2_{+-}(E) \\
 S^{max}_2 &= \left \{ \vecf \in \Kphi: Z_2^k \vecf \in \Kphi \ \forall  \ 
 k \in \NN \right \} \ 
 S^{min}_2 = \text{closure }P_{\Phi} \vecphi L^2_{-+}(E),  
\end{aligned}
\nomenclature{$S_j^{max}, S_j^{min}$}{Subspaces of the scattering subspace}
\] 
where each closure is taken in $\Kphi.$ Then, each $S^{max}_j$
 and $S^{min}_j$ is  invariant under multiplication by $Z_j$ and 
\begin{align}  \label{eqn:kdecomp} \Kphi = S^{max}_1 \oplus 
S^{min}_2 = S^{min}_1 \oplus S^{max}_2.  \end{align}
\end{thm}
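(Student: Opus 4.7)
My plan is to characterize each of $S^{max}_j$ and $S^{min}_j$ via Fourier support conditions on the second coordinate $g_v$ of elements $v \in \Kphi$, and then deduce both decompositions from a single orthogonality computation. The cornerstone is the identity
\[
\LL v, P_\Phi \vecphi h \RR_{\scrH} = \LL g_v, h \RR_{L^2(E)}
\]
for $v = \vecs{f_v}{g_v} \in \Kphi$ and $h \in L^2(E)$, which follows by using self-adjointness of $P_\Phi$ to write $\LL v, P_\Phi \vecphi h\RR_\scrH = \LL v, \op \vecs{0}{h}\RR_\scrH$ and then invoking the defining identity $\LL \vecf, \op \vecft \RR_\scrH = \LL \vecf, \vecft \RR_{L^2(E_*) \oplus L^2(E)}$ on $\scrH$.

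Iterating the $Z_j$-action then yields the maximal space characterizations $S^{max}_1 = \{v \in \Kphi : g_v \in L^2_{\bullet-}(E)\}$ and $S^{max}_2 = \{v \in \Kphi : g_v \in L^2_{-\bullet}(E)\}$. Indeed, $Z_1^k v \in \Kphi$ for every $k \in \NN$ requires $Z_1^k g_v \in L^2\ominus H^2(E)$ for every $k$, which is possible only when $g_v$ has no Fourier mass in $\mathbb{Z} \times \mathbb{Z}_+$. The $Z_j$-invariance of $S^{max}_j$ is immediate from the definition. The orthogonality step now reads: for $v \in \Kphi$, the identity gives $v \perp S^{min}_2$ iff $\LL g_v, h\RR = 0$ for all $h \in L^2_{-+}(E)$ iff $g_v \perp L^2_{-+}(E)$. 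Since $g_v \in L^2\ominus H^2(E) = L^2_{+-}(E) \oplus L^2_{-+}(E) \oplus L^2_{--}(E)$, this is equivalent to $g_v \in L^2_{\bullet-}(E)$, i.e., $v \in S^{max}_1$. Thus $(S^{min}_2)^{\perp_\Kphi} = S^{max}_1$, which gives $\Kphi = S^{max}_1 \oplus S^{min}_2$; the symmetric computation produces $\Kphi = S^{min}_1 \oplus S^{max}_2$.

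The main obstacle is showing $Z_j$-invariance of the minimal subspaces, since $Z_j$ does not preserve $\Kphi$ globally. To establish $Z_1 S^{min}_1 \subseteq S^{min}_1$, I plan to prove the intertwining identity $Z_1 P_\Phi \vecphi h = P_\Phi \vecphi(Z_1 h)$ for $h \in L^2_{+-}(E)$, which puts the left-hand side in $P_\Phi \vecphi L^2_{+-}(E) \subseteq S^{min}_1$. Writing $\vecphi h = P_\Phi \vecphi h + P_\W \vecphi h + P_{\W_*} \vecphi h$, the key observations are that $P_\W \vecphi h \in \W$ maps back into $\W$ under $Z_1$ because $\W = \vecphi H^2(E)$ is $Z_1$-invariant, and that a short inner product calculation yields $P_{\W_*} \vecphi h = \vecphis P_- \Phi h$ with $P_- \Phi h \in L^2_{+-}(E_*)$ (using that $\Phi h$ has Fourier support in $\mathbb{Z}_+ \times \mathbb{Z}$), so $Z_1$ sends it into $\vecphis L^2_{+-}(E_*) \subseteq \W_*$. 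Consequently $Z_1 P_\Phi \vecphi h$ and $P_\Phi \vecphi (Z_1 h)$ differ by an element of $\W \oplus \W_*$, and since both sides belong to $\Kphi$ they must agree. Continuity of $Z_1$ extends the invariance to the closure $S^{min}_1$, and $S^{min}_2$ is handled symmetrically.
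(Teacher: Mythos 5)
Your characterization of $S_j^{max}$ via Fourier support of the second coordinate, and the derivation of both orthogonal decompositions from the identity $\LL v, P_\Phi\vecphi h\RR_\scrH = \LL g_v, h\RR_{L^2(E)}$, are correct and close in spirit to the paper's proof (the paper phrases these steps as intersections of explicit orthogonal complements, but the content is the same). Your route to the $Z_j$-invariance of $S_j^{min}$, however, genuinely diverges from the paper and contains a gap. The paper introduces the auxiliary $Z_1$- and $\bar Z_1$-reducing subspace $\Q$, shows $P_\Q$ commutes with $Z_1$, and then verifies $P_\Phi\vecphi L^2_{+-}(E) = P_\Q\vecphi L^2_{+-}(E)$; invariance follows because $\vecphi L^2_{+-}(E)$ is $Z_1$-invariant. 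You instead track where $Z_1$ sends each of the three pieces $P_\Phi\vecphi h$, $P_\W\vecphi h$, $P_{\W_*}\vecphi h$.

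The gap is in the final sentence, where you assert that both $Z_1 P_\Phi\vecphi h$ and $P_\Phi\vecphi(Z_1 h)$ lie in $\Kphi$. The right-hand side does, but the left-hand side does not obviously: $Z_1$ is a unitary on $\scrH$, not a shift on $\Kphi$, and showing $Z_1 P_\Phi\vecphi h\in\Kphi$ is precisely the content of $P_\Phi\vecphi h\in S_1^{max}$, i.e.\ $S_1^{min}\subseteq S_1^{max}$, which you have not established. Without it, decomposing along $\Kphi\oplus\W\oplus\W_*$ only yields $P_\Phi\bigl(Z_1 P_\Phi\vecphi h\bigr) = P_\Phi\vecphi(Z_1 h)$, not equality of the unprojected terms. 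The hole is fillable within your framework: for $h\in L^2_{+-}(E)$, the explicit formula for $P_\Phi$ gives second component $g = h - \Phi^* P_-\Phi h$; since $\hat\Phi$ is supported in $\ZZ_+^2$, one checks $\Phi^* P_-\Phi h$ is supported in $\ZZ\times\ZZ_-$, so $g\in L^2_{\bullet-}(E)$ and hence $P_\Phi\vecphi h\in S_1^{max}$ by your own characterization. (Equivalently, one can check $\LL P_\Phi\vecphi h, P_\Phi\vecphi k\RR_\scrH=0$ for $k\in L^2_{-+}(E)$, giving $S_1^{min}\perp S_2^{min}$ directly.) One advantage of the paper's $\Q$-argument is that it sidesteps this membership issue entirely, since $P_\Q$ is built to commute with $Z_1$ from the outset; your approach is more hands-on but requires the extra Fourier-support verification to be sound.
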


\begin{proof} Our first observation is that $S_1^{max}$ is equal to
\begin{multline*}
\left(\begin{bmatrix} I \\ \Phi^* \end{bmatrix} L^2\ominus
  H^2(E_*)\right)^{\perp} \cap \left(\begin{bmatrix} \Phi \\
    I \end{bmatrix} L^2_{\bullet+}(E)\right)^{\perp} \\
=
\left\{ \vecf \in \scrH: f \in H^2(E_*), g \in
  L^2_{\bullet-}(E)\right\}
\end{multline*}
since $Z_1^k \vecf \perp \begin{bmatrix} \Phi \\ I \end{bmatrix}
H^2(E)$ for all $k\geq 0$ if and only if $\vecf \perp \begin{bmatrix}
  \Phi \\ I \end{bmatrix} L^2_{\bullet+}(E)$, which is equivalent to
saying $g \in L^2_{\bullet-}(E)$.  Therefore, $S_1^{max}$ is equal to
\begin{multline*}
\left(\begin{bmatrix} I \\ \Phi^* \end{bmatrix} L^2\ominus
  H^2(E_*)\right)^{\perp} \cap \left(\begin{bmatrix} \Phi \\
    I \end{bmatrix} H^2(E)\right)^{\perp} \cap \left(\begin{bmatrix} \Phi \\
    I \end{bmatrix} L^2_{-+}(E)\right)^{\perp} \\
= \Kphi \ominus
P_{\Phi} \left(\begin{bmatrix} \Phi \\  I \end{bmatrix}
  L^2_{-+}(E)\right). 
\end{multline*}
Hence,
\[
\Kphi \ominus S_{1}^{max} = \text{closure } P_{\Phi}
\left(\begin{bmatrix} \Phi \\ I \end{bmatrix} L^2_{-+}(E)\right)
=S_2^{min},
\]
which shows $\Kphi = S_1^{max} \oplus S_2^{min}$ and similarly $\Kphi
= S_1^{min} \oplus S_2^{max}$.  
It is also clear that $S_j^{max}$ is invariant under $Z_j$ for $j=1,2$.
Showing the same is true for $S_j^{min}$ requires more work.  Define
the following subspace of $\scrH$
\[
\Q = \left(\begin{bmatrix} I \\ \Phi^* \end{bmatrix} L^2_{\bullet -}(E_*)
\right)^{\perp} \cap \left(\begin{bmatrix} \Phi \\
    I \end{bmatrix} L^2_{\bullet+}(E)\right)^{\perp}
\]
and notice that $\Q$ is invariant under both $Z_1$ and $\bar{Z}_1$.  
Projection onto $\Q$ is given by
\[
P_{\Q} = \begin{bmatrix} P_{\bullet+} & -\Phi P_{\bullet+} \\
  -\Phi^* P_{\bullet -} & P_{\bullet-} \end{bmatrix}
\]
where $P_{\bullet\pm}$ is projection onto the appropriate
$L^2_{\bullet\pm}$ space; the proof of this fact is similar to the
proof of the formula for $P_{\Phi}$.  Now it can be directly checked
that
\[
P_{\Phi} \left(\begin{bmatrix} \Phi \\ I \end{bmatrix}
  L^2_{+-}(E)\right) = P_{\Q} \left(\begin{bmatrix} \Phi \\
    I \end{bmatrix} L^2_{+-}(E)\right).
\]
The key things to notice are that since $\Phi L^2_{+-}(E) \subset
L^2_{+\bullet}(E_*)$, it follows that $P_{\bullet+} \Phi
L^2_{+-}(E) = P_{+} \Phi L^2_{+-}(E)$, $P_{\bullet+}L^2_{+-} =0 =
P_{+}L^2_{+-}$, $P_{\bullet-} \Phi L^2_{+-}(E) = P_{-} \Phi
L^2_{+-}(E)$, and $P_{\bullet-} L^2_{+-}(E)= P_{-} L^2_{+-}$.
However, since $\Q$ is invariant under $Z_1$ and $\bar{Z}_1$, it
follows that $P_{\Q}$ commutes with $Z_1$.  Since $\begin{bmatrix}
  \Phi \\ I \end{bmatrix} L^2_{+-}(E)$ is invariant under $Z_1$, we
see that 
\[
P_{\Q} \left(\begin{bmatrix} \Phi \\
    I \end{bmatrix} L^2_{+-}(E)\right)
\]
is invariant under $Z_1$, and hence so is its closure.  This shows
$S_1^{min}$ is invariant under $Z_1$ and the proof that $S_2^{min}$ is
invariant under $Z_2$ is similar. \end{proof}

\begin{defn} \label{defn:residspace} 
\textbf{The Residual Subspace $\mcR$.} 
It is also useful to consider the residual subspace 
$\mcR$ of $\Kphi$ defined initially as $ \mcR:= 
S^{max}_1 \ominus S^{min}_1.$ Using the decomposition 
in (\ref{eqn:kdecomp}), it is basically immediate that
\[ \mcR = S^{max}_2 \ominus S^{min}_2 = S^{max}_1
 \cap S^{max}_2. \]
\end{defn} \nomenclature{$\mcR$}{The residual subspace of the
  scattering subspace}

\section{Constructing Agler Decompositions}\label{sect:construction}

\subsection{Connections between $\Kphi$ and $\Hphi$} 

The decompositions of $\Kphi$ into $S^{max}_j$ and 
$S^{min}_j$ can be used to construct similar decompositions
 of $\Hphi.$ The following results link $\Kphi$ and $\Hphi$.

\begin{lem} \label{lem:isom} There is an isometry $V: \Hphi \rightarrow \Kphi$ such that 
\[
\begin{aligned} Vf  =  \vecf   \text{ for some } g \in L^2 \ominus H^2(E)  \text{ and } 
V^* \vecf   = f  \ \ \forall g \text{ with } \vecf \in \Kphi.
\end{aligned}
\]
\end{lem}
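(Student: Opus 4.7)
The plan is to construct $V$ explicitly on a dense subspace of $\Hphi$, verify it is isometric there, extend by continuity, and then identify the adjoint as the first-coordinate projection.  From Example~\ref{ex:hphi}, $\Hphi = \mathcal{M}(\Dphi)$ with $\Dphi$ acting on $H^2(E_*)$; by Lemma~\ref{lem:oprange}, the subspace $\Dphi^2 H^2(E_*)$ is dense in $\Hphi$, which will be the domain on which $V$ is first defined.

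For each $h \in H^2(E_*)$, I would set
\[ V(\Dphi^2 h) := \op \begin{bmatrix} h \\ -P_+ \Phi^* h \end{bmatrix} = \begin{bmatrix} \Dphi^2 h \\ P_- \Phi^* h \end{bmatrix}, \]
where the second equality is a direct matrix computation using $\Dphi^2 = I - \Phi P_{H^2(E)}\Phi^*$ on $H^2(E_*)$.  The first coordinate lies in $H^2(E_*)$, the second in $L^2\ominus H^2(E)$, and the vector belongs to $\scrH$ since it is in the image of $\op$; hence $V(\Dphi^2 h) \in \Kphi$.  The key identity $\langle \op A, B\rangle_{\scrH} = \langle A, B\rangle_{L^2\oplus L^2}$, valid for $B$ in the image of $\op$, gives
\[ \|V(\Dphi^2 h)\|_{\scrH}^2 = \|h\|^2 - \|P_+\Phi^* h\|^2 = \|\Dphi h\|_{H^2(E_*)}^2, \]
while Lemma~\ref{lem:oprange} yields $\|\Dphi^2 h\|_{\Hphi}^2 = \langle \Dphi^2 h, h\rangle_{H^2} = \|\Dphi h\|^2$.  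So $V$ is isometric (and automatically well-defined) on $\Dphi^2 H^2(E_*)$, and extends uniquely by continuity to an isometry $V: \Hphi \to \Kphi$.

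To identify $V^*$, I would fix $\vecs{f}{g} \in \Kphi$ and test against $V(\Dphi^2 h)$ in $\scrH$:
\[ \left\langle \vecs{f}{g}, V(\Dphi^2 h)\right\rangle_{\scrH} = \left\langle \vecs{f}{g}, \vecs{h}{-P_+\Phi^* h}\right\rangle_{L^2\oplus L^2} = \langle f, h\rangle_{H^2(E_*)}, \]
using $f \in H^2(E_*)$ and $g \perp H^2(E) \ni P_+\Phi^* h$.  Matching this with $\langle V^*\vecs{f}{g}, \Dphi^2 h\rangle_{\Hphi}$ requires knowing that $f$ itself belongs to $\Hphi$, and this is the main obstacle of the argument.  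The plan here is to apply Cauchy--Schwarz in $\scrH$ to the pair $\vecs{f}{g}$ and $\op\vecs{\phi}{-P_+\Phi^*\phi}$, whose $\scrH$-norm one computes to be exactly $\|\Dphi\phi\|$; this produces
\[ |\langle f, \phi\rangle_{H^2(E_*)}|^2 \leq \|\vecs{f}{g}\|_{\scrH}^2 \cdot \|\Dphi\phi\|^2 \quad \text{for every } \phi \in H^2(E_*), \]
and a standard operator-range/Douglas's-lemma argument (see the discussion of operator ranges in \cite{sar94}) then forces $f \in \mathrm{range}(\Dphi) = \Hphi$.  With $f \in \Hphi$ secured, Lemma~\ref{lem:oprange} gives $\langle f, \Dphi^2 h\rangle_{\Hphi} = \langle f, h\rangle_{H^2}$, which matches the inner-product computation above; density of $\Dphi^2 H^2(E_*)$ in $\Hphi$ then forces $V^*\vecs{f}{g} = f$, completing the proof.
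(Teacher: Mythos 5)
Your construction of $V$, the computation showing it is isometric on the dense subspace $\Dphi^2 H^2(E_*)$, and the inner-product identity $\left\langle \vecs{f}{g}, V\Dphi^2 h\right\rangle_{\scrH} = \langle f, h\rangle_{H^2(E_*)}$ all match the paper's proof exactly. Where you diverge is in handling the one genuinely delicate point: for an arbitrary $\vecs{f}{g}\in\Kphi$, why is the first coordinate $f$ actually in $\Hphi$, so that the adjoint formula $V^*\vecs{f}{g}=f$ even makes sense? The paper addresses this by first showing (via boundedness of the projection $\pi:\vecs{f}{g}\mapsto f$ from $\scrH$ to $L^2(E_*)$) that the first coordinate of $Vf$ is always $f$, then explicitly computing $\ker V^* = (\operatorname{range} V)^\perp = \left\{\vecs{0}{g}: g\in L^2\ominus H^2(E)\cap\Delta L^2(E)\right\}$, and finally using the orthogonal decomposition $\Kphi = \operatorname{range}V \oplus \ker V^*$ to write $\vecs{f}{g} = Vf + \vecs{0}{h}$; comparing first coordinates forces $f\in\Hphi$ and $V^*\vecs{f}{g}=f$. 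You instead get $f\in\operatorname{range}(\Dphi)=\Hphi$ directly from the Cauchy--Schwarz bound $|\langle f,\phi\rangle_{H^2(E_*)}|\leq\left\|\vecs{f}{g}\right\|_{\scrH}\|\Dphi\phi\|_{H^2(E_*)}$ together with a Riesz-representation/Douglas range-inclusion argument, after which Lemma~\ref{lem:oprange} and density close the proof. Both arguments are correct. The trade-off is that the paper's route produces the kernel characterization of $V^*$ as a byproduct (useful geometric information, and it makes the subsequent set-theoretic corollary about $\Hphi$ immediate), while your route is more self-contained and proves the needed range containment head-on without invoking the Wold-type structure of $\operatorname{range}V$ inside $\Kphi$.
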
 \nomenclature{$V$}{Canonical isometry from $\Hphi$ to $\Kphi$}
 
\begin{proof} As was mentioned in Example \ref{ex:hphi}, the set 
$\Dphi^2 H^2(E_*)$ is dense in $\Hphi$. 
Define the operator $V$ on $\Dphi^2 H^2(E_*)$ by
\[
 V \Dphi^2 h = P_{\Phi} \vecphis h \qquad \forall \ h \in H^2(E_*).
\]
Notice that this equals
\[
\begin{bmatrix} P_{+} & -\Phi P_{+} \\ -\Phi^* P_{-} &
  P_{-} \end{bmatrix} \begin{bmatrix} I \\ \Phi^* \end{bmatrix} h =
\left[ \begin{array}{c} 
\Dphi^2 h \\
P_- \Phi^* h \end{array}
\right] 
=   \op \begin{bmatrix} h \\
  -P_{+} \Phi^* h \end{bmatrix}.
\]
The computation
\[
\begin{aligned}
\left \|\op \begin{bmatrix} h \\
  -P_{+} \Phi^* h \end{bmatrix} \right\|^2_{\scrH} &= \ip{ \op \begin{bmatrix} h \\
  -P_{+} \Phi^* h \end{bmatrix}}{ \begin{bmatrix} h \\
  -P_{+} \Phi^* h \end{bmatrix}}_{L^2(E_*) \oplus L^2(E)} \\
&=
\ip{\begin{bmatrix} \Dphi^2 h \\ P_{-} \Phi^* h\end{bmatrix}}{\begin{bmatrix} h \\
  -P_{+} \Phi^* h \end{bmatrix}}_{L^2(E_*) \oplus L^2(E)} \\
&= \ip{\Dphi^2 h}{h}_{L^2(E_{*}) }\\
&= \|\Dphi^2 h\|^2_{\Hphi} 
\end{aligned}
\]
at once shows that $V$ is well-defined ($\Dphi^2 h=0$ implies
$V\Dphi^2 h = 0$) and isometric, and therefore extends to an isometry
from $\Hphi$ to $\Kphi$.  To see that the first
component of $Vf$ is always $f$, it suffices to notice that since
the projection $\pi:\vecf \mapsto f$ is bounded from $\scrH$ to
$L^2(E_*)$ and since we have $\pi V f = f$ for the dense set of $f
\in \Dphi^2 H^2(E_*)$, the identity $\pi V f = f$ must hold for
all $f\in \Hphi$ by boundedness of $\pi V$.

Now, $V^*$ is a partial isometry from $\K_{\phi}$ onto $\Hphi$,
and  
\[
 \text{ker } V^* =(\text{range } V)^{\perp} = \left\{ \begin{bmatrix} 0
  \\ g \end{bmatrix} : g \in L^2\ominus H^2(E) \cap \Delta L^2(E)\right\}.
\]
The latter equality can be seen from the following computation.  If
$\vecf \in \Kphi$ is orthogonal to the range of $V$, then for any $h \in
H^2(E_*)$
\[
\begin{aligned}
0=\ip{\op \begin{bmatrix} h \\ -P_{+} \Phi^* h \end{bmatrix}}
{\begin{bmatrix}  f\\ g \end{bmatrix}}_{\Kphi} &=
 \ip{\begin{bmatrix} h \\ -P_{+} \Phi^* h \end{bmatrix}}
 {\begin{bmatrix}  f\\ g \end{bmatrix}}_{L^2(E_*)\oplus L^2(E)}\\
& = \ip{h}{f}_{L^2(E_*)},
\end{aligned}
\]
since $f \in H^2(E_*)$ and $g \in L^2\ominus H^2(E).$ Upon
setting $h=f$, this yields $f=0$.  On the other hand, the above
computation shows that if $\begin{bmatrix} 0 \\ g \end{bmatrix} \in
\Kphi,$ then this element is orthogonal to the range of $V$.  So, the
action of $V^*$ on $\Kphi$ can be directly computed as follows.  Any
$\vecf \in \Kphi$ can be written as $Vf + \begin{bmatrix} 0
  \\ h \end{bmatrix}$ for some $h \in L^2\ominus H^2(E) \cap \Delta L^2(E)$.
Then, $V^* \vecf = f$.  \end{proof}

An immediate corollary of the above theorem is:

\begin{cor} As sets, $\Hphi =  \left \{ f \in H^2(E_*): \text{ there is a }  g  \text{ with } \vecf \in 
\Kphi \right \}.$ \end{cor}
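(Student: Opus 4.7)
The statement is essentially a direct readout of Lemma \ref{lem:isom}, so the plan is simply to invoke that lemma in both directions and keep track of the first-coordinate projection.

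For the forward inclusion, suppose $f \in \Hphi$. The plan is to apply the isometry $V:\Hphi \to \Kphi$ from Lemma \ref{lem:isom} to $f$. By that lemma, $Vf = \begin{bmatrix} f \\ g \end{bmatrix}$ for some $g \in L^2\ominus H^2(E)$, and this element lies in $\Kphi$ by construction. In particular $f \in H^2(E_*)$ (since $\Kphi \subseteq \scrH$ and every element of $\Kphi$ has its first component in $H^2(E_*)$ by the description in Remark \ref{rem:kphi}), and the required $g$ is exhibited, so $f$ belongs to the right-hand side.

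For the reverse inclusion, suppose $f \in H^2(E_*)$ and that there exists $g$ with $\begin{bmatrix} f \\ g \end{bmatrix} \in \Kphi$. The plan is to apply $V^*$. Lemma \ref{lem:isom} states that $V^* \begin{bmatrix} f \\ g \end{bmatrix} = f$ whenever $\begin{bmatrix} f \\ g \end{bmatrix} \in \Kphi$. Since $V^*$ maps $\Kphi$ into $\Hphi$, we conclude $f \in \Hphi$.

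There is no real obstacle here — both directions are one-line consequences of Lemma \ref{lem:isom} and the explicit formulas for $V$ and $V^*$ given there. The only point worth being careful about is that the description of $V^*$ in the lemma is given in the form ``$V^*\begin{bmatrix} f \\ g \end{bmatrix} = f$ for all $g$ with $\begin{bmatrix} f \\ g \end{bmatrix}\in \Kphi$,'' so one must note that this $f$ is genuinely independent of the particular choice of $g$, which is precisely what makes the set equality well-defined.
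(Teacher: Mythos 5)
Your proof is correct and follows exactly the route the paper intends: the corollary is stated as an immediate consequence of Lemma \ref{lem:isom}, and your argument simply unwinds the two clauses of that lemma ($Vf$ has first component $f$ and lies in $\Kphi$; $V^*$ sends any $\vecs{f}{g}\in\Kphi$ to $f\in\Hphi$) to get the two inclusions. Your closing remark about well-definedness is a sensible observation but not strictly needed, since the set on the right is defined by an existential quantifier over $g$.
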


\subsection{Hilbert Spaces in $\Hphi$}

Using the partial isometry $V^*$ and the decompositions of $\Kphi$
given in Theorem \ref{thm:kdecomp}, we can construct Hilbert spaces
yielding Agler decompositions. First, we make some general
observations. Let $K$ be a closed subspace of $\Kphi$, and denote the
operator range of $V^*|_K$ by $H_K$.  
\nomenclature{$H_K$}{Operator range of $V^*\mid_{K}$}
 Then, $f \in H_K$ if and only if
there exists $g$ such that $\vecf \in K$.  Essentially by the definition
of operator range, $V^*\mid_{K}$ is a unitary from $K\ominus (K\cap
\ker V^*)$ onto $H_K$, and the inverse of this unitary will be of the
form $f \mapsto \vecs{f}{A_K f}$ where $A_K:H_K \to L^2(E)$
is some linear operator.  \nomenclature{$A_K$}{Component of isometry from
  $H_K$ into $K$}
By \eqref{eqn:gnorm}, $A_K$ is contractive, i.e.
:
\begin{equation} \label{eqn:genhk}
\|A_K f\|_{L^2(E)} \leq \left\| \vecs{f}{A_K f}\right\|_{\scrH} =
\|f\|_{H_{K}}
\end{equation}
and it is worth pointing out the following representation of the norm
\[
\|f\|_{H_K} = \min \left\{ \left\|
\vecf \right\|_{\scrH}: g \text{ satisfies } \vecf \in K\right\}.
\]
Let 
\[
k_w (z) = \frac{I}{(1-z_1\bar{w}_1)(1-z_2\bar{w}_2)} 
\]
be the Szeg\H{o} kernel on $H^2(E_{*})$. 
\begin{lem} \label{lem:opkern}
The reproducing kernel for $H_K$ is given by
\[
V^* P_{K} V \Dphi^2 k_w(z).
\]
Moreover, if $K$ is an orthogonal direct sum, $K =
\bigoplus_{j=1}^{\infty} K_j$, then the reproducing kernel for $H_K$
is the sum of the reproducing kernels for $H_{K_j}$.
\end{lem}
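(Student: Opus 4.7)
The plan is to verify the reproducing formula directly by reducing it to the known reproducing kernel of $\Hphi$ from Example \ref{ex:hphi}, using the operator range characterization of the $H_K$-norm together with the identity $V^*V = I_{\Hphi}$. Fix $\eta \in E_*$ and $f \in H_K$, and let $k_0 \in K \ominus (K \cap \ker V^*)$ be the unique element with $V^* k_0 = f$. By the definition of the operator range norm for $T = V^*\mid_K$ (cf.\ Lemma \ref{lem:oprange}),
\[
\langle V^* k_0, V^* k'\rangle_{H_K} = \langle k_0, k'\rangle_K
\]
holds for every $k' \in K$, because $k_0$ is already orthogonal to $K \cap \ker V^*$. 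Applying this with the choice $k' := P_K V \Dphi^2 k_w \eta \in K$, then dropping $P_K$ using that $k_0 \in K$ and moving $V$ across the inner product, yields
\[
\langle f, V^* P_K V \Dphi^2 k_w \eta\rangle_{H_K} = \langle k_0, V \Dphi^2 k_w \eta\rangle_{\scrH} = \langle V^* k_0, \Dphi^2 k_w \eta\rangle_{\Hphi} = \langle f(w), \eta\rangle_{E_*},
\]
where the last equality is the reproducing property of $\Hphi$. This identifies $V^* P_K V \Dphi^2 k_w(\cdot)$ as the reproducing kernel of $H_K$.

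For the direct sum statement, when $K = \bigoplus_{j=1}^{\infty} K_j$, the projections satisfy $P_K = \sum_j P_{K_j}$ in the strong operator topology, so substituting into the kernel formula just established and using the boundedness of $V$ and $V^*$ expresses the reproducing kernel of $H_K$ as the pointwise sum of the reproducing kernels $V^* P_{K_j} V \Dphi^2 k_w(\cdot)$ of the $H_{K_j}$. The only genuine subtlety is choosing the correct representative $k_0 \in (K \cap \ker V^*)^{\perp}$ so that the operator range identity collapses to a plain $K$-inner product with no residual projection; once that is in place, the remainder is a brief chain of adjoint manipulations.
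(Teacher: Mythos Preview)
Your proof is correct and follows essentially the same approach as the paper's own proof: both pick a representative $k_0 \in K\ominus(K\cap\ker V^*)$ with $V^*k_0=f$, use the operator range inner product to pass from $H_K$ to $K$, drop $P_K$ since $k_0\in K$, move $V$ across via adjointness, and finish with the reproducing property of $\Dphi^2 k_w$ in $\Hphi$; the direct sum statement is handled identically via $P_K=\sum_j P_{K_j}$ in the strong operator topology. Your write-up simply makes the operator range step a bit more explicit than the paper does.
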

\begin{proof}
Take any $f\in H_K$; this means
$f = V^* \vecf$, for some $\vecf \in K\ominus \left [ K\cap \ker V^*
  \right]$.  Then, for $w\in \D^2$ and $v \in E_*$
\[
\begin{aligned}
\ip{f}{V^* P_{K} V \Dphi^2 k_w v}_{H_K} &= \ip{\vecf}{V \Dphi^2 k_w
  v}_{\Kphi} \\
&= \ip{V^* \vecf}{\Dphi^2 k_ w v}_{\Hphi} \\
&= \ip{f}{k_w v}_{H^2(E_*)} =
\ip{f(w)}{v}_{E_*}.
\end{aligned}
\]
The assertion about direct sums follows from noticing $P_K =
\sum_{j=1}^{\infty} P_{K_j}$ in the strong operator topology.
\end{proof}

The Hilbert spaces of primary interest are defined as follows:

\begin{defn} Define the Hilbert spaces $H^{max}_j$ and 
$H^{min}_j$ to be the operator ranges of $V^*|_{S^{max}_j}$
 and $V^*|_{S^{min}_j}$. Then
\[ f \in H^{max}_j \text{ if and only if } \exists \ g \text{ with } 
\vecf \in S^{max}_j, 
\]
and the $H^{max}_j$ norm is given by
\[
\| f \|_{H^{max}_j}  := \left  \| P_{S^{max}_j \ominus \left
 [S^{max}_j \cap \ker V^* \right ]} \vecf  \right \|_{S^{max}_j}
= \min \left \{ \left \| \begin{bmatrix}
f \\
\tilde{g} 
\end{bmatrix} \right \|_{S^{max}_j}  : \begin{bmatrix}
f \\
\tilde{g} 
\end{bmatrix} \in S^{max}_j \right \}. 
\]
\end{defn} \nomenclature{$H_j^{max}, H_j^{min}$}{Operator ranges of
  $V^*|_{S_j^{max}}, V^*|_{S_j^{min}}$}

\begin{lem}[Wold decompositions]
\[
 S^{max}_j = \bigoplus_{n \in \NN} Z_j^n \big( S^{max}_j \ominus Z_j S^{max}_j \big )
 \ \oplus M_j^{max}
\]
\[
 S^{min}_j = \bigoplus_{n \in \NN} Z_j^n \big( S^{min}_j \ominus Z_j S^{min}_j \big )
 \ \oplus M^{min}_j
\]
where $M_j^{max}, M_j^{min} \subset \ker V^*$.
\end{lem}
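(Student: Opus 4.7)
The plan is to recognize this as a standard Wold decomposition for the isometry $Z_j$ acting on the $Z_j$-invariant subspaces $S_j^{max}$ and $S_j^{min}$, then identify the ``unitary part'' as living inside $\ker V^*$. Since $Z_j$ is unitary on the whole scattering model $\scrH$, its restriction to any invariant subspace is an isometry. Applied to $S_j^{max}$, the classical Wold decomposition yields
\[
S_j^{max} = \bigoplus_{n\in\NN} Z_j^n \bigl(S_j^{max}\ominus Z_j S_j^{max}\bigr) \ \oplus \ M_j^{max},
\qquad M_j^{max} := \bigcap_{n\geq 0} Z_j^n S_j^{max},
\]
and identically for $S_j^{min}$ with $M_j^{min} := \bigcap_{n\geq 0} Z_j^n S_j^{min}$. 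This part of the argument is a direct citation of the standard Wold decomposition, so the only actual content is the inclusion $M_j^{max}, M_j^{min} \subset \ker V^*$.

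For that inclusion, I would fix $j=1$ (the case $j=2$ is symmetric) and take $\vecf \in M_1^{max}$. By definition, for every $n\geq 0$ there exists $\vecs{f_n}{g_n} \in S_1^{max}\subset \Kphi$ with $Z_1^n \vecs{f_n}{g_n} = \vecf$. Comparing first components, $f = Z_1^n f_n$ with $f_n \in H^2(E_*)$, so the Fourier support of $f$ lies in $\{k\geq n\}\times \ZZ_+$ for every $n$. Hence $f\equiv 0$, which by Lemma~\ref{lem:isom} says $V^*\vecf = f = 0$, i.e. $\vecf\in\ker V^*$. The identical argument, using only $S_1^{min}\subset \Kphi$ so that first components lie in $H^2(E_*)$, handles the minimal case.

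The only mild subtlety, and what I would be most careful about, is making sure the Wold decomposition is being applied correctly: we need $Z_j$ to act as an \emph{isometry} on $S_j^{max}$ and $S_j^{min}$ (not just have these spaces invariant), but this is immediate because $Z_j$ is unitary on all of $\scrH$. No special structure of the minimal or maximal subspaces enters beyond their $Z_j$-invariance (already established in Theorem~\ref{thm:kdecomp}) and their containment in $\Kphi$, which forces first components into $H^2(E_*)$. So the proof should be short: invoke Wold, then one paragraph of Fourier support reasoning.
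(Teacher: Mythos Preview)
Your proof is correct and essentially identical to the paper's: both invoke the classical Wold decomposition and then argue that $f\in \bigcap_{n\geq 0} Z_j^n S_j^{max}$ forces the first component to vanish (the paper phrases this as $\bar{Z}_j^n f \in H^2(E_*)$ for all $n$). The only cosmetic difference is that the paper notes $M_j^{min}\subset M_j^{max}$ (since $S_j^{min}\subset S_j^{max}$) to dispose of the minimal case in one line, whereas you repeat the argument.
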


\begin{proof}
Since multiplication by $Z_j$ is an isometry on $S^{max/min}_j$, 
the classical Wold decomposition says that $S_j^{max}, S_j^{min}$ can be
decomposed as above where
\[
M_j^{max} = \bigcap_{n\geq 0} Z_j^n S_j^{max} \text{ and } M_j^{min} =
\bigcap_{n\geq 0} Z_j^n S_j^{min}
\]
so the only thing to show is $M_j^{max} \subset \ker V^*$, since
$M_j^{min} \subset M_j^{max}$.  So, if $\vecf \in \bigcap_{n\geq 0}
Z_1^n S_1^{max}$, then $\bar{Z}_1^n f \in H^2(E_*)$ for all $n\geq 0$,
which can only happen if $f=0$.  This shows $\vecf \in \ker V^*$.
\end{proof}

\begin{lem} \label{lem:hkernels}
Let $K_j^{max}, K_j^{min}$ be the reproducing kernels for the operator ranges
of $V^*\mid_{S_j^{max} \ominus Z_j S_j^{max}}, V^*\mid_{S_j^{min} \ominus Z_j S_j^{min}}$.  
\nomenclature{$K_j^{max}, K_j^{min}$}{Reproducing kernels for
  $H_{S_j^{max}\ominus Z_j S_j^{max}}, H_{S_j^{min}\ominus Z_j S_j^{min}}$} 
Then, the
  reproducing kernels for $H_j^{max}$ and $H_j^{min}$ are given by
\[
\frac{K_j^{max}(z,w)}{1-z_j \bar{w}_j} \text{ and }
\frac{K_j^{min}(z,w)}{1-z_j \bar{w}_j}.
\]
In addition, if $G$ is the reproducing kernel for the operator range
of $V^*|_{\mcR}$, \nomenclature{$G$}{Reproducing kernel for
  $H_{\mcR}$} then
\begin{equation} \label{eqn:KKG}
\frac{K_j^{max}(z,w)}{1-z_j \bar{w}_j} = \frac{K_j^{min}(z,w)}{1-z_j
  \bar{w}_j} + G(z,w).
\end{equation}
\end{lem}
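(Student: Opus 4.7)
The plan is to combine three ingredients: the Wold decomposition stated in the previous lemma, the additivity of reproducing kernels under orthogonal direct sums (Lemma \ref{lem:opkern}), and the fact that multiplication by $Z_j$ induces isometries between the relevant operator ranges because $Z_j$ acts isometrically on $\scrH$ and commutes with $V^*$ in the sense that $V^* Z_j \vecs{f}{g} = z_j f = z_j V^* \vecs{f}{g}$.

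First I would apply the Wold decomposition to $S_j^{max}$. Since $M_j^{max}\subset \ker V^*$, the operator range $H_{M_j^{max}}$ is $\{0\}$ and its reproducing kernel is $0$. By Lemma \ref{lem:opkern}, the reproducing kernel of $H_j^{max}=H_{S_j^{max}}$ is the sum, over $n\geq 0$, of the reproducing kernels of $H_{Z_j^n N}$, where $N:=S_j^{max}\ominus Z_j S_j^{max}$. The analogous statement holds for $H_j^{min}$.

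Next I would show that the reproducing kernel of $H_{Z_j^n N}$ equals $z_j^n\bar w_j^n K_j^{max}(z,w)$. Writing the minimal lift of $f\in H_N$ as $\vecs{f}{A_N f}\in N$ (so that $\|f\|_{H_N}=\|\vecs{f}{A_Nf}\|_\scrH$), the element $Z_j^n\vecs{f}{A_N f}=\vecs{z_j^n f}{z_j^n A_N f}$ sits in $Z_j^n N$ with the same $\scrH$-norm, hence $z_j^n f\in H_{Z_j^n N}$ with $\|z_j^n f\|_{H_{Z_j^n N}}\leq \|f\|_{H_N}$. Conversely any element of $H_{Z_j^n N}$ has a minimal lift of the form $Z_j^n\vecs{f}{A_N f}$ for a unique $f\in H_N$, giving the reverse inequality. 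Thus multiplication by $z_j^n$ is a Hilbert space isomorphism $H_N\to H_{Z_j^n N}$, and the standard transformation rule for reproducing kernels under such multiplier isomorphisms gives the kernel $z_j^n\bar w_j^n K_j^{max}(z,w)$. Summing the geometric series over $n\geq 0$ yields
\[
\sum_{n=0}^{\infty} z_j^n \bar w_j^n K_j^{max}(z,w)=\frac{K_j^{max}(z,w)}{1-z_j\bar w_j},
\]
which converges on $\D^2$ and is the reproducing kernel of $H_j^{max}$. The identical argument with $S_j^{min}$ in place of $S_j^{max}$ gives the formula for the kernel of $H_j^{min}$.

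For \eqref{eqn:KKG}, I would invoke the orthogonal decomposition $S_j^{max}=S_j^{min}\oplus \mcR$ from Definition \ref{defn:residspace}. A second application of Lemma \ref{lem:opkern} (to the direct sum splitting of $S_j^{max}$) says that the reproducing kernel of $H_j^{max}$ is the sum of the reproducing kernels of $H_j^{min}$ and $H_{\mcR}$, which is exactly the claimed identity. The main subtlety in this proof is the isometry step for $H_{Z_j^n N}\cong H_N$; everything else is bookkeeping via Lemma \ref{lem:opkern}. Once that isometry is in place, no further computation with the explicit formula $V^* P_K V \Dphi^2 k_w$ is needed.
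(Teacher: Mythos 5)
Your proposal is correct and takes essentially the same route as the paper: Wold decomposition, additivity of kernels over the orthogonal sum (Lemma~\ref{lem:opkern}), the geometric series, and the splitting $S_j^{max}=S_j^{min}\oplus\mcR$ for \eqref{eqn:KKG}. The only real variation is in identifying the kernel of the operator range over $Z_j^n\bigl(S_j^{max}\ominus Z_j S_j^{max}\bigr)$: the paper does a direct inner-product computation with the explicit formula $V^*Z_1^nP_1\bar Z_1^nV\Dphi^2 k_w$, while you package the same fact as ``multiplication by $Z_j^n$ is a multiplier unitary between the two operator ranges'' and invoke the standard transformation rule for kernels; both rest on $Z_j^n$ being unitary on $\scrH$ and commuting with $V^*$ on $S_j^{max}$. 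One small thing worth making explicit: your claim that every element of $H_{Z_j^n N}$ has minimal lift $Z_j^n\vecs{f}{A_N f}$ uses $Z_j^n\bigl(N\cap\ker V^*\bigr)=Z_j^n N\cap\ker V^*$, which holds because $\ker V^*\cap\Kphi$ consists exactly of the vectors with vanishing first component and $Z_j^n$ preserves that property -- an easy but not completely automatic point.
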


\begin{proof}
We can focus on $H_1^{max}$ which has reproducing kernel $V^*
P_{S_1^{max}} V \Dphi^2 k_w$ by previous remarks.  Let $P_1$ denote
orthogonal projection onto $S_1^{max} \ominus Z_1 S_1^{max}$.  Then,
orthogonal projection onto $Z_1^n(S_1^{max} \ominus Z_1 S_1^{max})$ is
given by $Z_1^nP_1 \bar{Z}_1^n$.  We now claim that the reproducing
kernel for the operator range of $V^*$ restricted to $Z_1^n(S_1^{max}
\ominus Z_1 S_1^{max})$ satisfies
\[
V^*Z_1^n P_1 \bar{Z}_1^n V \Dphi^2 k_{w}v = \bar{w}_1^n Z_1^n V^* P_1
V \Dphi^2 k_{w}v.
\]
Now for $\vecf \in S_1^{max},$ we have $Z_1^n V^* \vecf = V^* Z_1^n
\vecf$. This means $V^* Z_1^n P_1 = Z_1^n V^* P_1$ and so, for any $f
\in \Hphi$, $v \in E_*$,
\[
\begin{aligned}
\ip{f}{V^* Z_1^n P_1 \bar{Z}_1^n V \Dphi^2 k_{w}v}_{\Hphi} &=
\ip{V^* Z_1^n P_1 \bar{Z}_1^n V  f}{\Dphi^2 k_{w}v}_{\Hphi} \\
&= \ip{Z_1^n V^* P_1 \bar{Z}_1^n V f}{\Dphi^2 k_{w}v}_{\Hphi}
\\
&= w_1^n \ip{f}{V^* Z_1^n P_1 V \Dphi^2 k_{w}v}_{\Hphi} \\
&= \ip{f}{\bar{w}_1^n Z_1^n V^* P_1 V \Dphi^2
  k_{w}v}_{\Hphi},
\end{aligned}
\] 
so that $V^*Z_1^n P_1 \bar{Z}_1^n V \Dphi^2 k_{w}v = \bar{w}_1^n Z_1^n
V^* P_1 V \Dphi^2 k_{w}v$.
If we break up $S_1^{max}$ according to its Wold decomposition, then
since $V^*$ annihilates $M_1^{max}$, then Lemma \ref{lem:opkern}
implies that the reproducing kernel of $H_1^{max}$ is given by
\[
\sum_{n \geq 0} \bar{w}_1^n z_1^n V^* P_1 V \Dphi^2 k_{w}(z) =
\frac{V^* P_1 V \Dphi^2 k_{w}(z)}{1-z_1 \bar{w}_1} =
\frac{K_1^{max}(z,w)}{1-z_1\bar{w}_1}.
\]
The formulas for $H_2^{max}$ as well as the $H_j^{min}$ follow
similarly.  The formula \eqref{eqn:KKG} follows from the orthogonal
decomposition $S_j^{max} = S_j^{min} \oplus \mcR$ and Lemma
\ref{lem:opkern}.
\end{proof}

\subsection{Construction of Agler Kernels}

As above, let $K_j^{max}, K_j^{min}$ be the reproducing kernels for
the operator ranges of $V^*|_{S_j^{max} \ominus Z_j S_j^{max}}$ and
  $V^*|_{S_j^{min} \ominus Z_j S_j^{min}}$ respectively. 


\begin{thm} \label{thm:agdecomp}


The pairs $(K^{max}_1, K^{min}_2)$ and $(K^{min}_1, K^{max}_2)$ are
Agler kernels of $\Phi$, i.e.  for all $z,w \in \D^2,$
\begin{equation} \label{eqn:hdecomp} 
\frac{I_{E_*} - \Phi(z) \Phi(w)^*}{(1-z_1\bar{w}_1)(1-z_2\bar{w}_2)}
\ = \ \frac{K^{max}_1(z,w)}{1-z_1\bar{w}_1} + \frac{K^{min}_2(z,w)}{1-
  z_2\bar{w}_2} \ = \ \frac{K^{min}_1(z,w)}{1-z_1\bar{w}_1} +
\frac{K^{max}_2(z,w)} {1-z_2\bar{w}_2}.
\end{equation}

\end{thm}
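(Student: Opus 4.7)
The plan is to obtain both identities in \eqref{eqn:hdecomp} directly by assembling the reproducing-kernel machinery from Section \ref{sect:construction}, with essentially no new computation required.

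First I would identify the reproducing kernel of $H_{\Kphi}$, the operator range of $V^*$ restricted to the whole scattering subspace $\Kphi$. By Lemma \ref{lem:opkern}, this reproducing kernel is $V^* P_{\Kphi} V \Dphi^2 k_w(z) = V^*V \Dphi^2 k_w(z)$. Since $V$ is an isometry by Lemma \ref{lem:isom}, $V^*V = I_{\Hphi}$, so the kernel collapses to $\Dphi^2 k_w(z)$. By Example \ref{ex:hphi} this is exactly
\[
\frac{I_{E_*} - \Phi(z)\Phi(w)^*}{(1-z_1\bar{w}_1)(1-z_2\bar{w}_2)},
\]
and in fact $H_{\Kphi} = \Hphi$ as reproducing kernel Hilbert spaces.

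Next I would invoke the canonical decomposition $\Kphi = S^{max}_1 \oplus S^{min}_2$ from Theorem \ref{thm:kdecomp}. Since this is an orthogonal direct sum, the second statement in Lemma \ref{lem:opkern} yields that the reproducing kernel of $H_{\Kphi}$ is the sum of the reproducing kernels of $H_{S^{max}_1} = H^{max}_1$ and $H_{S^{min}_2} = H^{min}_2$. By Lemma \ref{lem:hkernels}, these reproducing kernels are exactly $\dfrac{K^{max}_1(z,w)}{1-z_1\bar{w}_1}$ and $\dfrac{K^{min}_2(z,w)}{1-z_2\bar{w}_2}$. Combining this with the computation above produces the first equality of \eqref{eqn:hdecomp}. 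The identical argument applied to the alternate decomposition $\Kphi = S^{min}_1 \oplus S^{max}_2$ in Theorem \ref{thm:kdecomp} yields the second equality.

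In short, the theorem is a direct consequence of $V^*V = I_{\Hphi}$, the orthogonal decompositions of $\Kphi$, and the two lemmas computing the reproducing kernels of operator ranges; there is no substantive obstacle to overcome, only bookkeeping to verify that the hypotheses of Lemma \ref{lem:opkern} (orthogonal direct sum) and Lemma \ref{lem:hkernels} (identification of the kernels) apply in the present setting.
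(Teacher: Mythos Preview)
Your proposal is correct and follows essentially the same approach as the paper's proof: both invoke Lemma \ref{lem:opkern} to decompose the reproducing kernel of $\Hphi$ along the orthogonal decomposition $\Kphi = S_1^{max}\oplus S_2^{min}$ from Theorem \ref{thm:kdecomp}, and then apply Lemma \ref{lem:hkernels} to identify the summands. Your version is slightly more explicit in verifying that $H_{\Kphi}=\Hphi$ via $V^*V=I_{\Hphi}$, whereas the paper simply asserts that the reproducing kernel of $\Hphi$ splits as the sum by Lemma \ref{lem:opkern}; otherwise the arguments coincide.
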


\begin{proof} 
The reproducing kernel of $\Hphi$, namely
\[
\Dphi^2 k_w(z) =
\frac{I_{E_*}-\Phi(z)\Phi(w)^*}{(1-z_1\bar{w}_1)(1-z_2\bar{w}_2)}
\]
is the sum of the kernels for $H_1^{max}$ and $H_2^{min}$ by Lemma
\ref{lem:opkern}, and these kernels are given by
\[
V^* P_{S_1^{max}} V \Dphi^2 k_w(z) \text{ and } V^* P_{S_2^{min}} V
\Dphi^2 k_w(z).
\]
By Lemma \ref{lem:hkernels}, these kernels can be computed directly in
terms of the reproducing kernels of $K_1^{max}$ and $K_2^{min}$ to
give us the formula \eqref{eqn:hdecomp}.  
\end{proof}

We remark that by \eqref{eqn:KKG} we get the formula
\[
\frac{I_{E_*}-\Phi(z)\Phi(w)^*}{(1-z_1\bar{w}_1)(1-z_2\bar{w}_2)} 
= \frac{K_1^{min}(z,w)}{1-z_1\bar{w}_1}  + \frac{K_2^{min}(z,w)}{1-z_2 \bar{w}_2}
 + G(z,w)
\]
where $G(z,w) = V^* P_{\mcR} V\Dphi^2 k_{w}(z)$ is the reproducing 
kernel of $H_{\mcR}$, the operator range of $V^*|_{\mcR}.$
 


\section{General Agler Kernels}

\subsection{Characterizations of General Agler Kernels} \label{sect:characterization}

Assume $(K_1, K_2)$ are Agler kernels of $\Phi \in
 \mathcal{S}_2(E, E_*)$ and define the Hilbert spaces 
 \begin{equation} \label{eqn:hspaces} H_1 : = \mathcal{H}
 \left( \frac{K_1(z,w) }{1-z_1\bar{w}_1} \right) \text{ and }  H_2 
 : = \mathcal{H} \left( \frac{K_2(z,w) }{1-z_2\bar{w}_2} 
\right). \end{equation}
Our goal is to use these auxiliary Hilbert spaces
 $H_1$ and $H_2$ to characterize $(K_1,K_2)$ in terms
of the extremal kernels $K^{max/min}_1$ and $K^{max/min}_2.$ 
The first main result is the following theorem:

\begin{thm} \label{thm:mincontain} Let $\Phi \in 
\mathcal{S}_2(E,E_*)$ and let $(K_1,K_2)$ be Agler kernels
 of $\Phi$. Define $H_1, H_2$ as in \eqref{eqn:hspaces}. Then
\[
H_1 \subseteq H_1^{max} \ \text{ and } \ H_2 \subseteq
 H_2^{max} 
 \]
 and these containments are contractive, i.e. for $j=1,2$
\[ 
\|f \|_{H_j^{max}} \le \| f \|_{H_j} \ \qquad \forall \ f \in  
H_j. 
\]
 \end{thm}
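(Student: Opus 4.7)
The plan is to use a weak-limit construction in $\scrH$ to exhibit, for any $f \in H_1$, an element $\vecs{f}{\tilde g} \in S_1^{max}$ with $\scrH$-norm at most $\|f\|_{H_1}$. Since $H_1^{max}$ is the operator range of $V^*|_{S_1^{max}}$, with norm given by the minimum of $\scrH$-norms over all such extensions of $f$, this will simultaneously yield the inclusion $H_1 \subseteq H_1^{max}$ and the contractive estimate $\|f\|_{H_1^{max}} \leq \|f\|_{H_1}$.

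First, $H_1 \subseteq \Hphi$ contractively: the reproducing kernel of $\Hphi$ equals the sum of the reproducing kernels of $H_1$ and $H_2$ by hypothesis, and a standard RKHS fact yields the contractive containment $H_1 \hookrightarrow \Hphi$. Combining this with the $Z_1$-invariance and $Z_1$-contractivity of $H_1$ (property (1) of the remark following \eqref{eqn:agdecomp2}) gives $\|Z_1^k f\|_{\Hphi} \leq \|Z_1^k f\|_{H_1} \leq \|f\|_{H_1}$ for every $k \geq 0$. Applying the isometry $V$ from Lemma \ref{lem:isom} produces $V(Z_1^k f) = \vecs{Z_1^k f}{g_k} \in \Kphi$ with $g_k \in L^2\ominus H^2(E)$ and $\scrH$-norm at most $\|f\|_{H_1}$. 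Since $Z_1$ is unitary on $\scrH$, the sequence $\vec v_k := \bar Z_1^k V(Z_1^k f) = \vecs{f}{\bar Z_1^k g_k}$ is bounded in $\scrH$ by $\|f\|_{H_1}$.

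By Banach-Alaoglu, pass to a weakly convergent subsequence $\vec v_{k_j} \to \vec v = \vecs{f}{\tilde g}$ in $\scrH$. The component projections $\pi_1, \pi_2$ are bounded from $\scrH$ to $L^2(E_*), L^2(E)$ by Remark \ref{rem:hspace}, hence weakly continuous; the first component of the weak limit is $f$ (the sequence is constant there) and the second is the $L^2(E)$-weak limit $\tilde g$ of $\bar Z_1^{k_j} g_{k_j}$. Weak lower semicontinuity of the norm preserves the bound $\|\vec v\|_{\scrH} \leq \|f\|_{H_1}$.

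The decisive step is to verify $\tilde g \in L^2_{\bullet-}(E)$, which places $\vec v$ in $S_1^{max}$ by the explicit description $S_1^{max}=\{\vecs{f}{g}\in\scrH : f\in H^2(E_*),\ g\in L^2_{\bullet-}(E)\}$ obtained at the start of the proof of Theorem \ref{thm:kdecomp}. For any test monomial $h = Z_1^{n_1} Z_2^{n_2} v$ with $n_2 \geq 0$ and $v \in E$ (such monomials span $L^2_{\bullet+}(E)$),
\[
\ip{\tilde g}{h}_{L^2(E)} = \lim_j \ip{\bar Z_1^{k_j} g_{k_j}}{h}_{L^2(E)} = \lim_j \ip{g_{k_j}}{Z_1^{k_j + n_1} Z_2^{n_2} v}_{L^2(E)};
\]
once $k_j + n_1 \geq 0$, the test element lies in $H^2(E)$ and is therefore orthogonal to $g_{k_j}\in L^2\ominus H^2(E)$, so the limit vanishes. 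The only point needing care is the interplay of weak convergence in $\scrH$ with that in $L^2$, which is handled by the boundedness of $\pi_1, \pi_2$ noted above. The proof of $H_2 \subseteq H_2^{max}$ is identical with the roles of $Z_1$ and $Z_2$ interchanged.
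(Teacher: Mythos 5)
Your proof is correct and takes essentially the same approach as the paper: build the bounded sequence $\vecs{f}{\bar{Z}_1^k g_k}$ by sliding $V(Z_1^k f)$ back by $\bar{Z}_1^k$, pass to a weak limit in $\scrH$, and verify the limit lands in $S_1^{max}$ with controlled norm. The only cosmetic differences are that you invoke weak lower semicontinuity of the norm where the paper explicitly estimates $|\ip{F_{n_j}}{F}_{\scrH}|$, and you push the weak limit through the bounded coordinate projections $\pi_1,\pi_2$ to read off the components in $L^2$ rather than testing directly in $\scrH$ against $\vecphis h$ and $\vecphi Z_1^n Z_2^m v$; both variations are sound and equivalent.
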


\begin{proof}
Let $f \in H_1$ and assume $\|f\|_{H_1} = 1$.  Then for all $n\geq 0$,
$Z_1^n f \in H_1 \subset \Hphi$ and $\|Z_1^n f\|_{\Hphi} \leq \|Z_1^n
f\|_{H_1} \leq 1$, since multiplication by $Z_1$ is a contraction in
$H_1$.  For each $n$ we can choose $g_n \in L^2\ominus H^2(E)$ such
that $\vecs{Z_1^n f}{g_n} \in \Kphi \ominus \ker V^*$ and
\[\left\|\vecs{f}{\bar{Z}_1^n g_n}\right\|_{\scrH} = \left\|\vecs{Z_1^n f}{g_n}\right\|_{\scrH} =
\|Z_1^n f\|_{\Hphi}
\leq 1.
\]
Notice $F_n:= \vecs{f}{\bar{Z}_1^n g_n} \in \Kphi \ominus \bar{Z}_1^n
\ker V^*,$ since $\bar{Z}_1^n g_n \in \bar{Z}_1^n (L^2\ominus H^2(E))$.
The sequence $\{F_n\}\subset \Kphi$ is bounded in norm and therefore
has a subsequence $\{F_{n_j}\}$ that converges weakly to some $F:=
\vecft$.  We claim that $f=f'$ and $g' \in L^2_{\bullet-}(E)$.  Since
\[
\ip{F_{n_j}}{ \vecphis h}_{\scrH} = \ip{f}{h}_{L^2(E_*)} \to
\ip{F}{\vecphis h}_{\scrH} = \ip{f'}{h}_{L^2(E_*)} \text{ as } j \to
\infty
\]
for all $h \in L^2(E_*)$, we see that $f=f'$.  Next, for any $v \in E$
and $n\in \ZZ, m\geq 0$
\[
\ip{F_{n_j}}{ \vecphi Z_1^n Z_2^mv}_{\scrH} = \ip{\bar{Z}_1^{n_j}
  g_{n_j}}{Z_1^n Z_2^mv}_{L^2(E)} = 0
\]
for $j$ large enough that $n_j + n \geq 0$ since $g_{n_j} \perp
H^2(E)$.  By weak convergence, the above expression converges to
\[
\ip{F}{\vecphi Z_1^n Z_2^mv}_{\scrH} = \ip{g'}{Z_1^nZ_2^m v}_{L^2(E)}
= \ip{\widehat{g'}(n,m)}{v}_{E} = 0
\]
so we see that $g' \perp L^2_{\bullet+}(E)$ and therefore $g' \in
L^2_{\bullet-}(E)$.  Hence we conclude that
\[
F = \vecs{f}{g'} \in S_1^{max}
\]
and so $f = V^{*}F$ must be in $H_1^{max}$.  To show
$\|f\|_{H_1^{max}} \leq 1$, observe that
\[
|\ip{F_{n_j}}{F}_{\scrH}| \to \|F\|^2_{\scrH} 
\]
and 
\[
|\ip{F_{n_j}}{F}_{\scrH}| \leq \|F_{n_j}\|_{\scrH} \|F \|_{\scrH} \leq
\|F\|_{\scrH}
\]
so that $\| F\|_{\scrH} \leq 1$. Finally, $\|f\|_{H_1^{max}} \leq \|
F\|_{\scrH} \leq 1$ as desired.  
Thus, $H_1$ is contractively contained in $H_1^{max}$.\end{proof}

Using the previous result, it is possible to 
characterize all Agler kernels in terms of the 
canonical kernels $K^{min}_1$, $K^{min}_2$ 
and $G$ as follows:

\begin{thm} \label{thm:maxmin} Let $\Phi \in \mathcal{S}_2(E, E_*)$
and let $K_1, K_2: \D^2 \times \D^2 \rightarrow \mathcal{L}(E_*)$. 
Then $(K_1, K_2)$ are Agler kernels of $\Phi$ if and only if there 
are positive kernels $G_1,G_2: \D^2  \times \D^2 \rightarrow \mathcal{L}(E_*)$ 
such that
\[
\begin{aligned} 
K_1(z,w) =& K_1^{min}(z,w) + (1-z_1 \bar{w}_1) G_1(z,w) \\
K_2(z,w) =& K_2^{min}(z,w) + (1-z_2 \bar{w}_2) G_2(z,w) 
\end{aligned}
\]
and $G = G_1 + G_2.$ \end{thm}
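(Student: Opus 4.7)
The plan is to prove both directions of the ``if and only if.''

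For the forward direction, suppose $(K_1, K_2)$ are Agler kernels. I would define $G_j(z,w) := (K_j(z,w) - K_j^{min}(z,w))/(1-z_j\bar{w}_j)$, so the stated formulas hold by construction; then verify (a) $G_1 + G_2 = G$ and (b) each $G_j$ is positive. The identity in (a) follows by expressing the reproducing kernel of $\mathcal{H}_\Phi$ in two ways: dividing the Agler decomposition of $(K_1,K_2)$ by $(1-z_1\bar{w}_1)(1-z_2\bar{w}_2)$ and comparing with the formula $K^{min}_1/(1-z_1\bar{w}_1) + K^{min}_2/(1-z_2\bar{w}_2) + G$ displayed at the end of Section \ref{sect:construction}, then subtracting. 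For (b), Theorem \ref{thm:mincontain} provides the contractive containment $H_j \subseteq H_j^{max}$, which by the standard RKHS principle is equivalent to positivity of the kernel difference $(K_j^{max}-K_j)/(1-z_j\bar{w}_j) = G - G_j = G_{3-j}$; hence both $G_1$ and $G_2$ are positive.

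For the backward direction, assume positive $G_1, G_2$ with $G_1 + G_2 = G$ and the stated formulas for $K_1, K_2$. The Agler identity $(1-z_1\bar{w}_1)K_2 + (1-z_2\bar{w}_2)K_1 = I - \Phi\Phi^*$ is a direct expansion: substitute, use $G_1 + G_2 = G$ to combine the cross terms, apply the identity $K_1^{min} + (1-z_1\bar{w}_1)G = K_1^{max}$, and invoke the known Agler decomposition for the canonical pair $(K_1^{max}, K_2^{min})$ from Theorem \ref{thm:agdecomp}.

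The main obstacle is verifying positivity of $K_j$ itself. The combination $K_j/(1-z_j\bar{w}_j) = K_j^{min}/(1-z_j\bar{w}_j) + G_j$ is clearly positive as a sum of positive kernels, but positivity of $K_j$ is equivalent to multiplication by $Z_j$ acting contractively on the RKHS with reproducing kernel $K_j/(1-z_j\bar{w}_j)$. The $H_j^{min}$ summand automatically has $M_{Z_j}$ contractive, so the crux reduces to establishing the same contractivity on $\mathcal{H}(G_j)$. I expect to handle this via the scattering-geometric framework of Section \ref{sect:construction}: since $\mathcal{H}(G)$ is the operator range of $V^*|_\mcR$ and $Z_j$ acts isometrically on the residual subspace $\mcR \subseteq S_j^{max}$, a positive summand $G_j \le G$ corresponds to a subspace of $\mcR$ that inherits the needed $Z_j$-invariance, from which $M_{Z_j}$-contractivity on $\mathcal{H}(G_j)$ follows. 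This scattering-theoretic input is the essential step where the machinery of Sections \ref{sect:scattering} and \ref{sect:construction} is indispensable.
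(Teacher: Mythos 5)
Your forward direction and the Agler-identity computation in the backward direction agree with the paper. In particular, defining $G_j = (K_j - K_j^{min})/(1-z_j\bar w_j)$, observing the cross-identities $G_1 = (K_2^{max}-K_2)/(1-z_2\bar w_2)$ and $G_2 = (K_1^{max}-K_1)/(1-z_1\bar w_1)$ via the two canonical decompositions of Theorem \ref{thm:agdecomp}, and then invoking Theorems \ref{thm:mincontain} and \ref{thm:kerdiff} for positivity, is exactly the paper's argument. The verification of the Agler identity given the formulas also matches.

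The gap is in your treatment of positivity of $K_j$ in the backward direction, and it cannot be repaired along the lines you propose. First, the claim that $Z_j$ acts ``isometrically on the residual subspace $\mcR$'' is false if meant as $Z_j\mcR\subseteq\mcR$: Proposition \ref{prop:smax} shows only $Z_j\mcR\subseteq\mcR_j$, not $Z_j\mcR\subseteq\mcR$, and in general $\mcR$ is not $Z_j$-invariant. Consequently $(1-z_j\bar w_j)G$ need not be a positive kernel, and more to the point, positivity of $K_j$ genuinely does not follow from positivity of $G_1,G_2$ with $G_1+G_2=G$. A concrete counterexample: take $\Phi(z)=z_1^2z_2$. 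A direct computation in the inner-function model gives $\mcR$ corresponding to $\operatorname{span}\{1,z_1\}$, $G(z,w)=1+z_1\bar w_1$, $K_1^{min}(z,w)=z_1^2\bar w_1^2$. Choosing the positive kernels $G_1\equiv 1$ and $G_2(z,w)=z_1\bar w_1$ (so $G_1+G_2=G$), the formula yields $K_1(z,w)=1-z_1\bar w_1+z_1^2\bar w_1^2$, whose $2\times 2$ Gram matrix at $z^1=(0,0)$, $z^2=(t,0)$ has determinant $t^4-t^2<0$ for $0<t<1$; so $K_1$ is not positive even though the Agler identity holds. The paper avoids all of this by explicitly \emph{assuming} $K_1,K_2$ are positive kernels at the start of the ($\Leftarrow$) direction; that is, the theorem should be read as quantifying over positive kernels $K_1,K_2$, and the correct move is to include that positivity as a hypothesis rather than to try to derive it.
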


\begin{proof}
($\Rightarrow$) Assume $(K_1,K_2)$ are Agler kernels of 
$\Phi$. By Theorem \ref{thm:mincontain} and Theorem \ref{thm:kerdiff},
there are positive kernels $G_1, G_2: \D^2 \times \D^2 \rightarrow
\mathcal{L}(E_*)$ such that
\begin{align*}
G_1(z,w) &= \frac{K_1^{max}(z,w)}{1-z_1\bar{w}_1} -
\frac{K_1(z,w)}{1-z_1\bar{w}_1} =\frac{K_1(z,w)}{1-z_1\bar{w}_1} -
\frac{K^{min}_1(z,w)}{1-z_1\bar{w}_1} \\
G_2(z,w) &=  \frac{K_2^{max}(z,w)}{1-z_2\bar{w}_2} -
\frac{K_2(z,w)}{1-z_2\bar{w}_2}=  \frac{K_2(z,w)}{1-z_2\bar{w}_2} -
\frac{K^{min}_2(z,w)}{1-z_2\bar{w}_2}.
\end{align*}
To show $G_1 + G_2 = G$, recall that since
 $(K_1,K_2)$ are Agler kernels of $\Phi$,
\[
\begin{aligned}
\frac{K_1^{min}(z,w)}{1-z_1 \bar{w}_1 }+  G_1(z,w) +
\frac{K_2^{min}(z,w)}{1-z_2 \bar{w}_2} &+  G_2(z,w)
= \frac{K_1(z,w)}{1-z_1 \bar{w}_1} + \frac{K_2(z,w)}
{1-z_2 \bar{w}_2} \\
&\\
& = \frac{ I_{E_*} - \Phi(z) \Phi(w)^*}{(1-z_1 \bar{w}_1)
(1-z_2 \bar{w}_2)} \\
&\\
& = \frac{K_1^{min}(z,w)}{1-z_1 \bar{w}_1 } +\frac{K_2^{min}
(z,w)}{1-z_2 \bar{w}_2}+  G(z,w),
\end{aligned}
\]
which implies $G = G_1 +G_2.$  \\

($\Leftarrow$) Now assume $(K_1,K_2)$ are positive 
kernels with positive kernels  $G_1,G_2: \D^2 \times
\D^2 \rightarrow \mathcal{L}(E_*)$ satisfying
\[
\begin{aligned} 
K_j(z,w) =& K_j^{min}(z,w) + (1-z_j 
\bar{w}_j) G_j(z,w) 
\end{aligned}
\]
for $j=1,2$ and $G= G_1 +G_2.$  Then 
\[
\begin{aligned} \frac{K_1(z,w)}{1-z_1 \bar{w}_1} + 
 \frac{K_2(z,w)}{1-z_2 \bar{w}_2} 
& =  \frac{K_1^{min}(z,w)}{1-z_1 \bar{w}_1 } +
\frac{K_2^{min}(z,w)}{1-z_2 \bar{w}_2}+  G(z,w) \\
&\\
& =\frac{ I_{E_*} - \Phi(z) \Phi(w)^*}{(1-z_1 
\bar{w}_1)(1-z_2 \bar{w}_2)}, 
\end{aligned}
\]
which implies $(K_1,K_2)$ are Agler kernels of $\Phi.$ 
\end{proof}

\subsection{Containment Properties of $\mathcal{H}(K_1)$
 and $\mathcal{H}(K_2)$} \label{sect:functions}

In this section, we consider the set of functions that can be
contained in $\mathcal{H}(K_1)$ or $\mathcal{H}(K_2).$ This result
generalizes a result about inner functions from \cite{bk12}.  We
require two additional subspaces $\mcR_1$ and $\mcR_2$
of $\scrH$, defined as follows:
\[
\mcR_j = \left \{ \vecf: f \in H^2(E_*), \ g \in Z_j L^2_{-
  -}(E), f-\Phi g \in \Delta_{*} L^2(E_*) \right \}
\] \nomenclature{$\mcR_j$}{Slight enlargements of $\mcR$}
for $j=1,2.$ These are slight enlargements of the residual subspace
$\mcR.$ We can now state the result: 
\begin{thm}  \label{thm:containment} 
Let $\Phi \in \mathcal{S}_2(E,E_*)$.  Then for $j=1,2$
\begin{align*} \Kmax &= \left \{ f : \text{ there exists } g \text{ with } 
\vecf \in \mcR_j \ominus Z_j \mcR \right \} \\ \Kmin &=
\left \{ f : \text{ there exists } g \text{ with } \vecf \in
\mcR_j \ominus \mcR \right \}. \end{align*} If
$(K_1,K_2)$ are general Agler kernels of $\Phi,$ then for $j=1,2$
\[
\begin{aligned}
\mathcal{H}(K_j) &\subseteq \left \{ f : \text{ there exists } g \text{ with } 
\vecf \in \mcR_j  \right \} \\
&= \left \{ f \in H^2(E_*): f\in \big ( \Phi
Z_j L^2_{--}(E) + \Delta_* L^2(E_*) \big ) \right \}. \end{aligned}
\]  \end{thm}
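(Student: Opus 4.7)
Plan.  The three claims are linked through the basic observation from Lemma \ref{lem:hkernels} that $\Kmax$ and $\Kmin$ are precisely the operator ranges of $V^*\mid_{S_j^{max}\ominus Z_j S_j^{max}}$ and $V^*\mid_{S_j^{min}\ominus Z_j S_j^{min}}$.  Thus the first two identifications reduce to showing that these $V^*$-images coincide, as subsets of $\Hphi$, with the $V^*$-images of $\mcR_j \ominus Z_j \mcR$ and $\mcR_j \ominus \mcR$ respectively.

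I would first record the elementary set-theoretic facts $Z_j \mcR \subset \mcR \subset \mcR_j \subset S_j^{max}$ and $\mcR_j \cap Z_j S_j^{max} = Z_j \mcR$, together with the analogue $\mcR_j \cap S_j^{min} = \mcR$ (using the formula $S_j^{min}=\Kphi\ominus S_k^{max}$ from Theorem \ref{thm:kdecomp} for $k\neq j$).  These are all immediate from the Fourier-support definitions of $L^2_{--}(E)$, $Z_j L^2_{--}(E)$, $L^2_{\bullet-}(E)$, $L^2_{-\bullet}(E)$ combined with the alternate characterization of $\scrH$ in Remark \ref{rem:hspace}.

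For the maximum case, given $\vecf\in\mcR_j\ominus Z_j\mcR\subseteq S_j^{max}$, I would decompose $\vecf = F_1+F_2$ with $F_1\in S_j^{max}\ominus Z_j S_j^{max}$ and $F_2\in Z_j S_j^{max}$.  Since $F_1\perp Z_j S_j^{max}\supseteq Z_j\mcR$ and $\vecf\perp Z_j\mcR$, one has $F_2\perp Z_j\mcR$.  Combining $F_2\in Z_j S_j^{max}\ominus Z_j\mcR$ with the identity $\mcR_j\cap Z_j S_j^{max}=Z_j\mcR$ and Remark \ref{rem:hspace} forces $F_2\in \ker V^*$, so that $V^*\vecf=V^*F_1\in\Kmax$.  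For the reverse inclusion, given $\vecs{f}{\tilde g}\in S_j^{max}\ominus Z_j S_j^{max}$ I would use the flexibility in the $g$-component provided by Remark \ref{rem:hspace} to modify $\tilde g$ by an element of $\Delta L^2(E)\cap(L^2\ominus H^2(E))\subseteq \ker V^*$ so that the new second component $g$ lies in $Z_j L^2_{--}(E)$, and then subtract the orthogonal projection onto $Z_j\mcR$ (which again lives in $\ker V^*$, so leaves $f$ unchanged) to land in $\mcR_j\ominus Z_j\mcR$.  The minimum case proceeds identically with $Z_j\mcR$ and $Z_j S_j^{max}$ replaced by $\mcR$ and $S_j^{min}$.

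The containment claim for general Agler kernels then follows by combining Theorem \ref{thm:mincontain} (which gives $\mathcal{H}(K_j)\subseteq\Kmax$ contractively) with the identification $\Kmax=V^*(\mcR_j\ominus Z_j\mcR)\subseteq V^*(\mcR_j)$, and the final set equality is a direct unpacking of the definition of $\mcR_j$ via Remark \ref{rem:hspace}: $\vecf\in\mcR_j$ is equivalent to $f\in H^2(E_*)$, $g\in Z_j L^2_{--}(E)$, and $f-\Phi g\in\Delta_*L^2(E_*)$, which says precisely $f\in\Phi Z_j L^2_{--}(E)+\Delta_*L^2(E_*)$.  The main obstacle will be the bookkeeping in showing $F_2\in\ker V^*$: it requires a genuine argument that any element of $Z_j S_j^{max}$ orthogonal to $Z_j\mcR$ in $\scrH$ must have trivial first component, which is where the operator-range inner product on $\scrH$ and the partial isometry $V^*$ interact most subtly.
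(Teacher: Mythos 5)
Your approach deviates significantly from the paper's and has several genuine gaps.

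The paper's route is to prove an exact equality of subspaces of $\scrH$ (Propositions \ref{prop:Smin} and \ref{prop:smax}): $S_j^{min}\ominus Z_j S_j^{min} = \mcR_j\ominus\mcR$ and $S_j^{max}\ominus Z_j S_j^{max} = \mcR_j\ominus Z_j\mcR$, so the $V^*$-image identities are immediate with nothing to check.  You instead try to show only that the $V^*$-images coincide, by decomposing $\vecf = F_1+F_2$ and arguing $F_2\in\ker V^*$.  This step is unjustified: $F_2\in Z_jS_j^{max}\ominus Z_j\mcR = Z_j(S_j^{max}\ominus\mcR)=Z_jS_j^{min}$, and elements of $Z_jS_j^{min}$ have first components in $Z_jH^2(E_*)$, which are generically nonzero, so $V^*F_2 = 0$ does not follow.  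The paper's propositions show that in fact $F_2=0$, but the proof requires the $\Q,\M$ machinery and the Wold decomposition, not the intersection identities you cite.  Several of your auxiliary claims are also wrong: $\mcR_j\cap S_j^{min}=\mcR$ is false, since $\mcR\subset S_{3-j}^{max}=\Kphi\ominus S_j^{min}$ forces $\mcR\perp S_j^{min}$; and in the reverse inclusion, the projection onto $Z_j\mcR$ does not lie in $\ker V^*$ (elements of $Z_j\mcR$ have nonzero first component), so subtracting it does change $f$.

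The general-kernel containment also rests on a misreading.  Theorem \ref{thm:mincontain} gives $H_j\subseteq H_j^{max}$, i.e.\ $\mathcal H\bigl(K_j/(1-z_j\bar w_j)\bigr)\subseteq\mathcal H\bigl(K_j^{max}/(1-z_j\bar w_j)\bigr)$, \emph{not} $\mathcal H(K_j)\subseteq\mathcal H(K_j^{max})$.  One cannot pass between them: positivity of $(K_j^{max}-K_j)/(1-z_j\bar w_j)$ does not give positivity of $K_j^{max}-K_j$, because $1-z_j\bar w_j$ is not a positive kernel.  The paper handles this by using Theorem \ref{thm:maxmin} to write $K_j=K_j^{min}+(1-z_j\bar w_j)G_j$, $G=G_1+G_2$, then observing that $(K_j^{min}+G)-K_j = G_{3-j}+z_j\bar w_j G_j$ \emph{is} a positive kernel; Theorem \ref{thm:kerdiff} then gives $\mathcal H(K_j)\subseteq\mathcal H(K_j^{min}+G)$ contractively, and Theorem \ref{thm:kersum} decomposes each $f\in\mathcal H(K_j)$ as $f_1+f_2$ with $f_1\in\mathcal H(K_j^{min})$, $f_2\in\mathcal H(G)$, each of which lands in the stated set.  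Your outline omits this entirely, and without it the containment does not follow.
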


The proof of this result requires several auxiliary results about the
functions in $S^{max}_j \ominus Z_j S^{max}_j$ and $S^{min}_j \ominus
Z_j S^{min}_j.$

\begin{prop} \label{prop:Smin} For $j=1,2$, the following equality holds:
\[
S^{min}_j \ominus Z_j S^{min}_j =   \mcR_j \ominus \mcR.
\]
\end{prop}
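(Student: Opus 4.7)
The plan is to prove the case $j=1$; the case $j=2$ follows by swapping variables. First I reduce the statement algebraically. Since $L^2_{--}(E) \subseteq Z_1 L^2_{--}(E)$ we have $\mcR \subseteq \mcR_1 \subseteq S^{max}_1 = S^{min}_1 \oplus \mcR$, so decomposing any $\vecf \in \mcR_1$ along this orthogonal sum puts its $S^{min}_1$-component back into $\mcR_1$ (the $\mcR$-component already lies in $\mcR_1$). This yields $\mcR_1 = (\mcR_1 \cap S^{min}_1) \oplus \mcR$, so $\mcR_1 \ominus \mcR = \mcR_1 \cap S^{min}_1$, reducing the claim to showing $S^{min}_1 \ominus Z_1 S^{min}_1 = \mcR_1 \cap S^{min}_1$.

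The heart of the proof is a Fourier-support computation. Using the density of $P_{\Phi} \vecphi L^2_{+-}(E)$ in $S^{min}_1$ from Theorem \ref{thm:kdecomp}, for any $\vecf \in \Kphi$ and $h \in L^2_{+-}(E)$ the unitarity of $Z_1$, self-adjointness of $P_{\Phi}$, and the identity $\vecphi h = \op \vecs{0}{h}$ collapse the pairing in $\scrH$ to one in $L^2(E)$:
\[
\ip{\vecf}{Z_1 P_{\Phi} \vecphi h}_{\scrH} = \ip{G}{h}_{L^2(E)},
\]
where $G$ is the second component of $P_{\Phi} \bar{Z}_1 \vecf$. Invoking the explicit block form of $P_{\Phi}$ from Remark \ref{rem:kphi} gives $G = -\Phi^* P_{-}\bar{Z}_1 f + P_{-}\bar{Z}_1 g$. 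Because $f \in H^2(E_*)$ forces $P_{-}\bar{Z}_1 f$ to be supported on $\{-1\}\times \ZZ_+$ and $\Phi^*$ has Fourier support in $-\ZZ_+^2$, the product $\Phi^* P_{-}\bar{Z}_1 f$ is supported in $\{n_1 \leq -1\}$ and so pairs trivially with every $h \in L^2_{+-}(E)$; furthermore $P_{-}\bar{Z}_1 g = \bar{Z}_1 g$ since $g \in L^2 \ominus H^2(E)$. Thus the orthogonality $\vecf \perp Z_1 S^{min}_1$ is equivalent to the single Fourier condition $\hat{g}(n_1,n_2) = 0$ for all $n_1 \geq 1$, $n_2 \leq -1$.

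Both inclusions follow. If $\vecf \in S^{min}_1 \ominus Z_1 S^{min}_1$, then $\vecf \in S^{min}_1 \subseteq S^{max}_1$ gives $g \in L^2_{\bullet -}(E)$, which together with the Fourier condition above forces $g \in Z_1 L^2_{--}(E)$; the membership conditions $f \in H^2(E_*)$ and $f - \Phi g \in \Delta_{*}L^2(E_*)$ are automatic from $\vecf \in \Kphi \subseteq \scrH$, so $\vecf \in \mcR_1 \cap S^{min}_1$. Conversely, any $\vecf \in \mcR_1 \cap S^{min}_1$ has $g \in Z_1 L^2_{--}(E)$, which makes the Fourier condition trivially hold and so $\vecf \perp Z_1 S^{min}_1$. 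The main obstacle is the noncommutation of $P_{\Phi}$ with $Z_1$, which obstructs any direct computation of $\ip{\vecf}{Z_1 S^{min}_1}_{\scrH}$; this is resolved by the fortuitous vanishing of the cross-term $\Phi^* P_{-}\bar{Z}_1 f$ on $L^2_{+-}(E)$ coming from the Fourier-support structures of $\Phi^*$ and $H^2(E_*)$.
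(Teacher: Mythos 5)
Your proof is correct, and it takes a route that is organized differently from the paper's. The paper's proof reuses the auxiliary space $\Q$ from the proof of Theorem \ref{thm:kdecomp} together with a related space $\M$: it shows $S^{min}_1 = \Q \ominus \M$, notes $\M \subseteq Z_1\M \subseteq \Q$ and $Z_1\Q = \Q$, and reads off $S^{min}_1 \ominus Z_1 S^{min}_1 = Z_1\M \ominus \M$ from the unitarity of $Z_1$ on $\Q$; intersecting with $S^{max}_1$ then identifies $Z_1\M\cap S^{max}_1 = \mcR_1$ and $\M\cap S^{max}_1 = \mcR$. You instead reduce $\mcR_1 \ominus \mcR$ to $\mcR_1 \cap S^{min}_1$ via the decomposition $S^{max}_1 = S^{min}_1 \oplus \mcR$, and then identify $S^{min}_1 \ominus Z_1 S^{min}_1$ with $\mcR_1 \cap S^{min}_1$ by pairing a generic $\vecf\in\Kphi$ directly against the dense set $Z_1 P_\Phi \vecphi L^2_{+-}(E)$ and computing the Fourier support of the resulting expression. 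Both arguments ultimately rest on the same Fourier-support cancellation, but the paper packages it abstractly in the $\Q/\M$ machinery (whose utility is the claim, already established in Theorem \ref{thm:kdecomp}, that $P_\Q = P_\Phi$ on $\vecphi L^2_{+-}(E)$), whereas your version is self-contained and makes the crucial vanishing of $\Phi^* P_{-}\bar{Z}_1 f$ against $L^2_{+-}(E)$ explicit. The paper's route is more economical in context; yours makes the mechanism more transparent at the cost of redoing a computation hidden inside Theorem \ref{thm:kdecomp}.
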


\begin{proof} We prove the result for $S^{min}_1.$ We shall make use
  of the proof of Theorem \ref{thm:kdecomp}.  Recall the space $\Q$
  defined there:
\[
\Q = \left(\begin{bmatrix} I \\ \Phi^* \end{bmatrix} L^2_{\bullet -}(E_*)
\right)^{\perp} \cap \left(\begin{bmatrix} \Phi \\
    I \end{bmatrix} L^2_{\bullet+}(E)\right)^{\perp}.
\]
We define and manipulate a related space
\[
\begin{aligned}
\M &= \left(\begin{bmatrix} I \\ \Phi^* \end{bmatrix} L^2_{\bullet -}(E_*)
\right)^{\perp} \cap \left(\begin{bmatrix} \Phi \\
    I \end{bmatrix} L^2\ominus L^2_{--}(E)\right)^{\perp} \\
&=\left(\begin{bmatrix} I \\ \Phi^* \end{bmatrix} L^2_{\bullet -}(E_*)
\right)^{\perp} \cap \left(\begin{bmatrix} \Phi \\
    I \end{bmatrix} L^2_{\bullet+}(E)\right)^{\perp} \cap \left(\begin{bmatrix} \Phi \\
    I \end{bmatrix} L^2_{+-}(E)\right)^{\perp}\\
&= \Q \ominus P_{\Q} \left(\begin{bmatrix} \Phi \\
    I \end{bmatrix} L^2_{+-}(E)\right).
\end{aligned}
\]
Also, note $\M = \left \{ \vecf\in \scrH : f \in L^2_{\bullet +}(E_*), g \in L^2_{- -}(E)\right \}$.
Then, 
\[
\begin{aligned}
\Q \ominus \M &= \text{closure}_{\scrH} P_{\Q} \left(\begin{bmatrix} \Phi \\
    I \end{bmatrix} L^2_{+-}(E)\right) \\
&= \text{closure}_{\scrH} P_{\Phi} \left(\begin{bmatrix} \Phi \\
    I \end{bmatrix} L^2_{+-}(E)\right) = S_1^{min},
\end{aligned}
\]
using the proof of Theorem \ref{thm:kdecomp}.  
Observe that $\mathcal{M} \subseteq Z_1 \mathcal{M} \subseteq
\mathcal{Q}$ and $Z_1 \mathcal{Q} = \mathcal{Q}$. 
Since multiplication by $Z_1$ is an isometry on $\scrH$, we can calculate
\[
\begin{aligned} S^{min}_1 \ominus Z_1 S^{min}_1 
&= \left( \mathcal{Q} \ominus \mathcal{M} \right) \ominus Z_1
\left( \mathcal{Q} \ominus \mathcal{M} \right)\\
& = \left( \mathcal{Q} \ominus \mathcal{M} \right) \ominus 
\left( Z_1 \mathcal{Q} \ominus  Z_1 \mathcal{M} \right)\\
& = \left( \mathcal{Q} \ominus \mathcal{M} \right) \ominus 
\left( \mathcal{Q} \ominus Z_1 \mathcal{M} \right)\\
& = Z_1 \mathcal{M} \ominus  \mathcal{M}.
\end{aligned}
\]
As $S^{min}_1 \ominus  Z_1 S^{min}_1 \subseteq S^{max}_1$, 
we can conclude
\[
\begin{aligned} 
  S^{min}_1 \ominus Z_1 S^{min}_1 =& \big( Z_1 \mathcal{M}
  \cap S^{max}_1 \big) \ominus  \big( \mathcal{M} \cap S^{max}_1 \big) \\
  =& \left \{ \vecf\in \scrH : f \in H^2(E_*), \ g \in Z_1 L^2_{--}(E) \right \}  \\
  &\ominus \left \{ \vecf\in \scrH : f \in H^2(E_*), \ g \in
    L^2_{--}(E) \right \} \\
  =& \mcR_1 \ominus \mcR,
\end{aligned}
\]
as desired. The proof follows similarly for $S^{min}_2.$
\end{proof}

We also obtain similar characterizations of $S^{max}_j \ominus Z_j S^{max}_j$.

\begin{prop} \label{prop:smax} For $j=1,2$ the following equalities hold: 
\[ 
S^{max}_j \ominus Z_j S^{max}_j = \mcR_j \ominus Z_j \mcR.
\]
\end{prop}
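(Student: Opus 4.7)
The plan is to prove the two inclusions $\mcR_1 \ominus Z_1 \mcR \subseteq S^{max}_1 \ominus Z_1 S^{max}_1$ and $S^{max}_1 \ominus Z_1 S^{max}_1 \subseteq \mcR_1 \ominus Z_1 \mcR$ separately, focusing on $j=1$; the case $j=2$ follows by symmetry. The easier inclusion is an immediate consequence of Proposition \ref{prop:Smin}. Both $\mcR_1$ and $Z_1 \mcR$ lie in $S^{max}_1$, and Proposition \ref{prop:Smin} yields the orthogonal decomposition $\mcR_1 = \mcR \oplus (S^{min}_1 \ominus Z_1 S^{min}_1)$. Given $\xi \in \mcR_1 \ominus Z_1 \mcR$, write $\xi = r+s$ with $r \in \mcR$ and $s \in S^{min}_1 \ominus Z_1 S^{min}_1$, and split $Z_1 S^{max}_1 = Z_1 S^{min}_1 \oplus Z_1 \mcR$. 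Then $\xi \perp Z_1 \mcR$ by hypothesis, $r \perp Z_1 S^{min}_1 \subseteq S^{min}_1$ because $\mcR = S^{max}_1 \ominus S^{min}_1$, and $s \perp Z_1 S^{min}_1$ by definition, so $\xi \perp Z_1 S^{max}_1$.

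The reverse inclusion is the substantive step. Take $\xi = \vecs{f}{g} \in S^{max}_1 \ominus Z_1 S^{max}_1$. Since $Z_1 \mcR \subseteq Z_1 S^{max}_1$, the orthogonality $\xi \perp Z_1 \mcR$ is automatic, so it suffices to show $\xi \in \mcR_1$. As $f \in H^2(E_*)$ and $g \in L^2_{\bullet-}(E)$, what remains is $g \in Z_1 L^2_{--}(E)$, equivalently $\LL g, v\RR_{L^2(E)} = 0$ for every $v \in L^2(E)$ with $\supp \hat{v} \subseteq \{1,2,\ldots\} \times \{-1,-2,\ldots\}$. The strategy is to exhibit, for each such $v$, an element $\eta \in Z_1 S^{max}_1$ orthogonal to $\xi$ and extract the identity from $\LL \xi, \eta\RR_{\scrH} = 0$.

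To build $\eta$, note that since $\Phi$ is analytic on $\D^2$, $\supp \widehat{\Phi v} \subseteq \{1,2,\ldots\} \times \{-1,0,1,\ldots\}$, so the orthogonal projection $\psi$ of $\Phi v$ onto $L^2(E_*) \ominus Z_1 H^2(E_*)$ has $\supp \hat{\psi} \subseteq \{1,2,\ldots\} \times \{-1\}$. Set $u := -\psi$ and $\eta := \op \vecs{u}{v}$. The first component $u+\Phi v = \Phi v - \psi$ lies in $Z_1 H^2(E_*)$ by construction. For the second component $-\Phi^*\psi + v$, since $\widehat{\Phi^*}$ is supported in $\{(m,n) \in \ZZ^2 : m,n \leq 0\}$, a direct convolution computation gives $\supp \widehat{\Phi^*\psi} \subseteq \ZZ \times \{-1,-2,\ldots\}$, so $\Phi^*\psi \in L^2_{\bullet-}(E)$ and hence $-\Phi^*\psi + v \in L^2_{\bullet-}(E)$. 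Thus $\eta \in Z_1 S^{max}_1$, and the reproducing identity $\LL \xi, \op \vecs{u}{v}\RR_{\scrH} = \LL f, u\RR_{L^2(E_*)} + \LL g, v\RR_{L^2(E)}$ combined with $\xi \perp \eta$ yields $\LL g, v\RR = \LL f, \psi\RR$. But $\supp \hat{f} \subseteq \ZZ_+ \times \ZZ_+$ and $\supp \hat{\psi} \subseteq \{1,2,\ldots\} \times \{-1\}$ are disjoint in the second coordinate, so $\LL f, \psi\RR = 0$ and therefore $\LL g, v\RR = 0$, completing the argument.

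The main obstacle is designing the test vector $\eta$: the cancellation $u = -\psi$ must simultaneously place $\eta$ in $Z_1 S^{max}_1$ and force the cross term $\LL f, u\RR$ to vanish by Fourier-support disjointness with $f \in H^2(E_*)$. Both steps rely crucially on $\Phi$ being analytic, which confines $\supp \hat{\psi}$ to the single second-coordinate value $\{-1\}$---precisely the ``gap'' between the second-coordinate supports of $\Phi v$, which is $\{-1,0,1,\ldots\}$, and of $Z_1 H^2(E_*)$, which is $\ZZ_+$.
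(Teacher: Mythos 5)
Your proof is correct in substance but proceeds quite differently from the paper's. The paper handles this proposition with pure orthogonal-decomposition bookkeeping: it writes $S_j^{max}$ in two ways,
\[
S_j^{max} = (S_j^{max} \ominus Z_j S_j^{max}) \oplus Z_j\mcR \oplus Z_j S_j^{min}
\quad\text{and}\quad
S_j^{max} = \mcR_j \oplus Z_j S_j^{min},
\]
the second obtained by combining $S_j^{max}=\mcR\oplus S_j^{min}$, the Wold step $S_j^{min}=(S_j^{min}\ominus Z_j S_j^{min})\oplus Z_j S_j^{min}$, and Proposition \ref{prop:Smin} (so that $\mcR\oplus(S_j^{min}\ominus Z_j S_j^{min})=\mcR\oplus(\mcR_j\ominus\mcR)=\mcR_j$). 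Cancelling $Z_j S_j^{min}$ gives the claim in two lines, with Proposition \ref{prop:Smin} doing all the heavy lifting. You instead prove the two inclusions separately. Your inclusion $\mcR_1\ominus Z_1\mcR\subseteq S_1^{max}\ominus Z_1 S_1^{max}$ is essentially a disaggregated version of the paper's algebra and is fine. Your other inclusion is genuinely different: it avoids Proposition \ref{prop:Smin} entirely and instead constructs explicit test vectors $\eta=\op\vecs{u}{v}\in Z_1 S_1^{max}$ so that the reproducing identity $\ip{\xi}{\op\vecs{u}{v}}_{\scrH}=\ip{f}{u}+\ip{g}{v}$ forces the Fourier coefficients $\hat g(m,n)$ to vanish for $m\geq 1,\, n\leq -1$. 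This is a more concrete, Fourier-analytic route; it trades the paper's brevity for a direct picture of why $g\in Z_1 L^2_{--}(E)$, and it is self-contained in a way the paper's argument is not.

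One small inaccuracy you should fix, though it does not break the argument. You assert $\supp\widehat{\Phi v}\subseteq\{1,2,\ldots\}\times\{-1,0,1,\ldots\}$ and hence $\supp\hat\psi\subseteq\{1,2,\ldots\}\times\{-1\}$. That would be correct if the test vector $v$ had its second Fourier coordinate confined to $\{-1\}$, but you allow $\supp\hat v\subseteq\{1,2,\ldots\}\times\{-1,-2,\ldots\}$. Since $\widehat{\Phi}$ is supported in $\ZZ_+^2$ and the second coordinate of $\hat v$ ranges over all of $\ZZ_-$, you in fact get $\supp\widehat{\Phi v}\subseteq\{1,2,\ldots\}\times\ZZ$, and after projecting off $Z_1H^2(E_*)$ you get $\supp\hat\psi\subseteq\{1,2,\ldots\}\times\ZZ_-$, not a single row. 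Fortunately your argument only ever uses two consequences of the support of $\psi$: that $\widehat{\Phi^*\psi}$ lives in $\ZZ\times\ZZ_-$ (so the second component of $\eta$ is in $L^2_{\bullet-}(E)$), and that $\supp\hat\psi$ is disjoint from $\ZZ_+\times\ZZ_+$ (so $\ip{f}{\psi}=0$). Both consequences still hold with the corrected support $\{1,2,\ldots\}\times\ZZ_-$, so the proof survives; but the claim that the cancellation ``confines $\supp\hat\psi$ to the single second-coordinate value $\{-1\}$'' should be revised, since that is not actually what is happening, nor is it needed.
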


\begin{proof} Recall that $S^{max}_j = \mcR \oplus S_j^{min}$ and
  $\mcR, Z_j \mcR \subseteq \mcR_j.$ Now
\[
\begin{aligned}
S_j^{max} &= (S_j^{max} \ominus Z_j S_j^{max}) \oplus Z_j S_j^{max} \\
&= (S_j^{max} \ominus Z_j S_j^{max}) \oplus Z_j \mcR \oplus Z_j
S_j^{min}
\end{aligned}
\]
while $S_j^{max}$ can also be decomposed as 
\[
\begin{aligned}
& \mcR \oplus (S_j^{min} \ominus Z_j S_j^{min}) \oplus Z_j S_j^{min}
\\
=& \mcR \oplus (\mcR_j \ominus \mcR) \oplus Z_j S_j^{min} \\
=& \mcR_j \oplus Z_j S_j^{min}.
\end{aligned}
\]
Together these show $S_j^{max} \ominus Z_j
S_j^{max} = \mcR_j \ominus Z_j\mcR$. 
\end{proof}


Now we can prove Theorem \ref{thm:containment}. 

\begin{proof} The definitions of $\Kmax$ and $\Kmin$ combined with
Propositions \ref{prop:Smin} and \ref{prop:smax} imply that

\begin{align*} \Kmax &= \left \{ f : \text{ there exists } g \text{ with } 
\vecf \in \mcR_j \ominus Z_j \mcR  \right \} \\
\Kmin &= \left \{ f : \text{ there exists } g \text{ with } 
\vecf \in \mcR_j \ominus \mcR  \right \}, \end{align*}
and then the definition of $\mcR_j$ implies: 
\[ 
\begin{aligned}
\mathcal{H}(K^{max/min}_j ) &\subseteq \left \{ f : \text{ there exists } g \text{ with } 
\vecf \in \mcR_j   \right \} \\
& = \left \{ f \in H^2(E_*): f\in \big ( \Phi Z_j L^2_{--}(E) 
+ \Delta_* L^2(E_*) \big ) \right \}.
\end{aligned}
\]
Now let $(K_1,K_2)$ be any pair of Agler kernels of $\Phi$. 
By Theorem \ref{thm:maxmin}, there are positive kernels 
$G_1, G_2$ such that each
\[
K_j(z,w) = K^{min}_j(z,w)  + (1-z_j\bar{w}_j)G_j(z,w)
\]
and $G= G_1 + G_2.$ This means
\[
 \Big( K^{min}_1 (z,w) + G(z,w) \Big)  - K_1(z,w)  
= G_2(z,w) + z_1 \bar{w}_1 G_1(z,w) 
\]
 is a positive kernel. Similar results hold for $K_2$,  
 so that Theorem \ref{thm:kerdiff} implies $
 \mathcal{H}(K_j)$ is contained  contractively in 
 $\mathcal{H}(K^{min}_j + G).$ But then, Theorem \ref{thm:kersum}
 implies that each $f \in \mathcal{H}(K_j)$ can be 
 written as $f = f_1 + f_2$, for  $f_1 \in \Kmin$ 
 and $f_2 \in \mathcal{H}(G).$ Our above arguments give the 
 desired result for $f_1$ and the definition of $\mathcal{H}(G)$
 gives the desired result for $f_2.$ This means
\[
\begin{aligned}
 \mathcal{H}(K_j) &\subseteq \left \{ f : \text{ there exists } g \text{ with } 
\vecf \in \mcR_j   \right \} \\
& = \left \{ f \in H^2(E_*): f\in \big ( \Phi Z_j L^2_{--}(E) 
+ \Delta_* L^2(E_*) \big ) \right \}.
\end{aligned}
 \]
as desired.
\end{proof}

\section{Applications}

\subsection{Analytic Extension Theorem} \label{sect:extensions}

 
In this section, we restrict to the situation where $E$ and $E_*$ are
finite dimensional with equal dimensions, so after fixing orthonormal
bases of $E$ and $E_*$, we can assume $\Phi$ is a square matrix of
scalar valued $H^\infty (\D^2)$ functions. The containment
results in Theorem \ref{thm:containment} allow us to give conditions
for when such $\Phi$ and the elements of any $\mathcal{H}(K_1)$ and
$\mathcal{H}(K_2)$ associated to Agler kernels of $\Phi$ extend
analytically past portions of $\partial \D^2$.  We first make some
preliminary comments about defining functions in the canonical spaces
outside of the bidisk.

Any Hilbert space contractively contained in $H^2(E_*)$ clearly has
bounded point evaluations at points of $\D^2$.  On the other hand, for
the spaces $\mcR,\mcR_1,\mcR_2$ we can construct points of bounded
evaluation at certain points of $\E^2$, where $\E = \C \setminus
\overline{\D}$.  Using the notation of \eqref{eqn:genhk}, there is a
unitary map from $H_{\mcR}$ onto $\mcR \ominus (\mcR\cap \ker V^*)$ of
the form
\[
f \mapsto \vecs{f}{A_{\mcR} f}
\]
where $A_{\mcR}$ is a contractive linear map from $H_{\mcR}$ to
$L^2_{--}(E)$.  If $f\in H_\mcR$, then
$\vecs{f}{A_{\mcR} f} \in \scrH$ and so
\[
f = \Phi A_{\mcR} f + (I-\Phi \Phi^*)^{1/2} h \text { by } \eqref{eqn:hphichar}
\]
for some $h \in L^2(E_*)$. Let 
\[
S = \{z \in \E^2: \Phi(1/\bar{z}) \text{ is not invertible} \}.
\]
Since $A_{\mcR} f \in L^2_{--}(E)$, we can
write $A_{\mcR} f = \overline{Z_1 Z_2 g}$ for $g \in H^2(E)$ and then
evaluation at $z \in \E^2 \setminus S$ is defined by
\begin{equation} \label{extdef}
f(z) := (\Phi(1/\bar{z})^*)^{-1} \frac{1}{z_1
  z_2}\overline{g(1/\bar{z})}.
\end{equation}
Since $\D^2$ and $\E^2$ are disjoint, for the moment this is just a
formal definition. However, with additional assumptions on $\Phi$, it
is this definition of $f$ in $\E^2$ that provides a holomorphic
extension of $f$.  This evaluation is bounded since $|g(1/\bar{z})|
\leq C \|g\|_{H^2(E)} = C \|A_{\mcR} f \|_{L^2(E)}$ for some $C>0$ and
then
\[
|f(z)| \leq C \frac{1}{|z_1z_2|}\|(\Phi(1/\bar{z})^*)^{-1}\|
\|A_{\mcR} f\|_{L^2(E)} \leq C
\frac{1}{|z_1z_2|}\|(\Phi(1/\bar{z})^*)^{-1}\| \| f\|_{H_{\mcR}}.
\]
This shows evaluation at $z \in \E^2\setminus S$ is a bounded linear
functional of $H_{\mcR} = \mathcal{H}(G)$.

Analogous analysis can be applied to $\mcR_1,\mcR_2$ so that
$H_{\mcR_1}, H_{\mcR_2}$ possess bounded point evaluations at points
of $\E^2 \setminus S$.   In the case of $f \in H_{\mcR_1}$, since
$A_{\mcR_1}f \in Z_1 L^2_{--}$, we can write $f = Z_1 \overline{Z_1Z_2
  g} = \bar{Z}_2 \bar{g}$ for some $g \in H^2(E_*)$ and then we
replace \eqref{extdef} with
\[
f(z) := (\Phi(1/\bar{z})^*)^{-1} \frac{1}{z_2}\overline{g(1/\bar{z})}
\]
for $z \in \E^2 \setminus S$.  For $H_{\mcR_2}$ we simply switch the
roles of $z_1,z_2$.  Since $\mathcal{H}(K_j^{max/min})$ is
contractively contained in $H_{\mcR_j}$, we can define point
evaluations at points of $\E^2\setminus S$ for the canonical Agler
kernel spaces as well.

We proceed to study analytic extensions of $\Phi$ past the boundary.
Let $X \subseteq \mathbb{T}^2$ be an open set and define the related
sets
\[
\begin{aligned} X_1 & := \left \{ x_1 \in \mathbb{T} : \text{ such that }
 \exists \ x_2 \text{ with } (x_1, x_2) \in X \right \} \\
 X_2 & := \left \{ x_2 \in \mathbb{T} : \text{ such that } \exists \ x_1 
 \text{ with } (x_1, x_2) \in X \right \} .
 \end{aligned}
\]
Then we have the following result:
\begin{thm} \label{thm:extension} Let $\Phi \in \mathcal{S}_2(E, E_*)$
  be square matrix valued. Then the following are equivalent:
\begin{itemize}
\item[$(i)$] $\Phi$ extends continuously to 
$X$ and $\Phi$ is unitary valued on $X$.
\item[$(ii)$] There is some pair $(K_1,K_2)$ of 
Agler kernels of $\Phi$ such that the elements 
of $\mathcal{H}(K_1)$ and $\mathcal{H}(K_2)$ 
extend continuously to $X.$
\item[$(iii)$] There exists a domain $\Omega$ containing 
\beq \D^2 \cup X \cup (X_1 \times \D) \cup (\D \times X_2) 
 \cup (\mathbb{E}^2 \setminus S ) \eeq
such that $\Phi$ and the elements of $\mathcal{H}(K_1)$ 
and $\mathcal{H}(K_2)$ extend analytically to $\Omega$ 
for every pair $(K_1, K_2)$ of Agler kernels of $\Phi.$
Moreover the points in the set $\Omega$
are points of bounded evaluation of every $\mathcal{H}(K_1)$
 and $\mathcal{H}(K_2).$ 
\end{itemize}
\end{thm}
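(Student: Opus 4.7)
The plan is to prove the three implications $(iii) \Rightarrow (ii) \Rightarrow (i) \Rightarrow (iii)$. The first is immediate, since an analytic extension to a domain containing $X$ is a fortiori continuous on $X$.

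For $(ii) \Rightarrow (i)$, continuous extension of every element of $\mathcal{H}(K_j)$ to $X$ implies by a closed-graph argument that each $x \in X$ is a point of bounded evaluation of $\mathcal{H}(K_j)$; consequently the reproducing kernels $K_j(z,w)$ themselves extend continuously in each variable separately and, by sesqui-analyticity, jointly on appropriate boundary sets. The Agler decomposition
\[
I_{E_*} - \Phi(z)\Phi(w)^* = (1-z_1\bar w_1) K_2(z,w) + (1-z_2\bar w_2) K_1(z,w)
\]
then forces $\Phi(z)\Phi(w)^*$ to extend continuously to $X \times X$; setting $z=w \in X$ makes the right-hand side vanish and yields $\Phi(z)\Phi(z)^* = I$, hence $\Phi$ is unitary on $X$ (using that $\Phi$ is square). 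To extract continuity of $\Phi$ itself, note $\det \Phi \not\equiv 0$ (otherwise $\Phi$ could not be unitary on the open set $X$), pick $w_0 \in \D^2$ with $\Phi(w_0)$ invertible, and write $\Phi(z) = \bigl(\Phi(z)\Phi(w_0)^*\bigr)\bigl(\Phi(w_0)^*\bigr)^{-1}$.

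For $(i) \Rightarrow (iii)$, I first extend $\Phi$. The reflection $\tilde\Phi(z) := (\Phi(1/\bar z)^*)^{-1}$ is holomorphic on $\E^2 \setminus S$ and, since $\Phi$ is unitary on $X$, matches $\Phi$ continuously on $X$; a local two-variable Morera/edge-of-the-wedge argument around each point of $X$ glues $\Phi$ and $\tilde\Phi$ into an analytic function on $\D^2 \cup W \cup (\E^2\setminus S)$ for some open neighborhood $W \subset \C^2$ of $X$. To enlarge the extension domain to $X_1 \times \D$, fix $x_1 \in X_1$, choose $x_2 \in \T$ with $(x_1, x_2) \in X$, and pick a polydisk $V = B(x_1, \epsilon) \times B(x_2, \epsilon) \subset W$. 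The union $\D^2 \cup V$ forms a Hartogs figure in $\C^2$, so Hartogs' extension theorem gives analytic extension of $\Phi$ to a neighborhood of $\{x_1\} \times \D$; varying $x_1$ over $X_1$ produces a neighborhood of $X_1 \times \D$, and symmetrically for $\D \times X_2$. Define $\Omega$ as the union of $\D^2$, $\E^2 \setminus S$, and these extension neighborhoods.

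Finally, I extend the elements of $\mathcal{H}(K_j)$. By Theorem~\ref{thm:containment}, any $f \in \mathcal{H}(K_j)$ has a representation $f = \Phi h + \Delta_* k$ on $\T^2$ for some $h \in Z_j L^2_{--}(E)$ and $k \in L^2(E_*)$. Because $\Phi$ is unitary on $X$, $\Delta_*\mid_X = 0$, giving $f\mid_X = \Phi\mid_X\, h\mid_X$. The Fourier support of $h$ makes $z_j \overline{h\mid_{\T^2}}$ the boundary value of some $H \in H^2(E)$, so $h(z) := z_j^{-1}\overline{H(1/\bar z)}$ extends $h$ analytically to $\E^2 \setminus \{z_j = 0\}$, and $f = \Phi h$ extends analytically to $\E^2 \setminus S$ in agreement with \eqref{extdef}. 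Applying the Hartogs continuation of the third paragraph to the product $\Phi h$ (or separately to each factor) then extends $f$ analytically across $X_1 \times \D$ and $\D \times X_2$ jointly with $\Phi$, producing an extension to all of $\Omega$; bounded point evaluation at each $z \in \Omega$ follows from the explicit formulas together with the estimates already indicated just before the theorem. The main obstacle is the Hartogs extension step: the reflection across $X$ only directly produces an extension to a polydisk neighborhood of $X$, and upgrading this to analytic extension across the full slabs $X_1 \times \D$ and $\D \times X_2$ requires carefully verifying the Hartogs-figure hypothesis at each point of $X_1$ (and $X_2$) and invoking the continuity theorem uniformly in the transverse parameter.
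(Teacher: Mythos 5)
Your implication scheme $(iii)\Rightarrow(ii)\Rightarrow(i)\Rightarrow(iii)$ is a cyclic permutation of the paper's $(i)\Rightarrow(iii)\Rightarrow(ii)\Rightarrow(i)$, and your $(iii)\Rightarrow(ii)$ and $(ii)\Rightarrow(i)$ match the paper in substance (the paper phrases the unitarity step via a sequence $z^n\to x$ and the uniform bound $\|K_j(z^n,z^n)\|\le M^2$, which avoids the joint-continuity-of-$K_j$-on-$X\times X$ claim you gesture at; your route is recoverable but as written ``sesqui-analyticity gives joint continuity on appropriate boundary sets'' is not justified). The serious problems are in $(i)\Rightarrow(iii)$.

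First, the claim that $\D^2\cup V$, with $V$ a small polydisk around a boundary point $(x_1,x_2)\in\T^2$, ``forms a Hartogs figure'' whose envelope contains a neighborhood of $\{x_1\}\times\D$ is not correct as stated, and the intended conclusion is also stronger than what is true. Consider $f(z_1,z_2)=1/(z_1z_2-i)$ with $(x_1,x_2)=(1,1)$: $f$ is holomorphic on $\D^2$ and on a small polydisk around $(1,1)$, yet for every $\delta>0$ there are points $(z_1,z_2)\in B(1,\delta)\times\D$ with $z_1z_2=i$, so no polydisk ``slab'' $B(x_1,\delta)\times\D$ can lie in the extension domain. The correct statement (which the paper imports as Rudin's Theorem 4.9.1) is only that the extension domain is \emph{some} open set containing $\{x_1\}\times\D$, necessarily pinching near $\{x_1\}\times\T$. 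Your sketch neither proves this nor cites a theorem that does, and the Kontinuit\"atssatz/Hartogs-figure machinery you invoke does not apply in the form you describe, because the boundary circles $\{z_1\}\times\{|\zeta|=r\}$ needed to sweep the disk leave $\D^2\cup V$ as soon as $|z_1|\ge 1$.

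Second, the gluing of the interior and exterior pieces of an element $f\in\mcH(K_j)$ is more delicate than you allow. For $\Phi$ itself the continuous edge-of-the-wedge theorem applies because $(i)$ supplies genuine continuous boundary values on $X$. But an element $f\in\mcH(K_j)\subset H^2(E_*)$ has only a.e.\ nontangential boundary values, and the identity $f=\Phi A_{\mcR_j}f$ on $X$ is an $L^2$ identity, not a pointwise-continuous one. A ``local Morera'' argument does not establish that the interior and exterior analytic pieces fit together; the paper instead invokes the \emph{distributional} edge-of-the-wedge theorem (Rudin's Theorem B) precisely for this reason. You should also be aware that the bounded-point-evaluation assertion in $(iii)$ needs an argument beyond ``the explicit formulas'': the paper obtains it from the fact that the edge-of-the-wedge construction produces the extended values by integration over a compact set where evaluations are already uniformly bounded, and then propagates boundedness to $\Omega_1$ through the proof of Theorem 4.9.1.

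Fixing these issues essentially forces you back to the paper's tools: Rudin's continuous edge-of-the-wedge for $\Phi$, Rudin's distributional edge-of-the-wedge for the $\mcH(K_j)$ elements, and Rudin's Theorem 4.9.1 (not a raw Hartogs figure) for continuation to neighborhoods of $X_1\times\D$ and $\D\times X_2$.
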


\begin{proof} We prove $(i) \Rightarrow (iii) \Rightarrow (ii) \Rightarrow (i).$ 
A similar result for inner functions appears as Theorem 1.5 in \cite{bk12}. Many of the 
arguments in this situation are similar. Thus, we outline the proof and 
provide more details on the points where the two proofs diverge.

Since most of the work occurs in $(i) \Rightarrow (iii),$ let us consider this
implication first. The proof involves $3$ claims. \\

\textbf{Claim 1: $\Phi$ extends analytically to $\Omega.$} \\

Since $\Phi$ extends continuously to $X$ and is unitary valued there, 
there is a neighborhood $W^+ \subseteq \D^2$ such that $\Phi$ is 
invertible on $W^+$ and $X \subseteq \overline{ W^+}.$ Then 
\begin{equation} \label{eqn:phiextend}
\Phi(z): = \left[ \Phi \left ( 1 / \bar{z} \right)^* \right]^{-1}
\end{equation}
defines an analytic function on $\mathbb{E}^2 \setminus S$ that is 
meromorphic on $\mathbb{E}^2$. Define $W^- 
= \left \{ 1 / \bar{z} : z \in W^+ \right \}.$
Then $\Phi$ is analytic on $W^+ \cup W^-$ and continuous on 
$W^+ \cup X \cup W^-.$ 
By Rudin's continuous edge-of-the-wedge theorem, which appears as 
Theorem A in \cite{rudeow}, there is a domain $\Omega_0$  containing 
$W^{+}\cup X \cup W^{-},$
where $\Phi$ extends analytically. This domain only depends on $X, W^{\pm}.$
Also $\Phi$ is already holomorphic on $\D^2$, meromorphic on $\mathbb{E}^2$,
and holomorphic on $\mathbb{E}^2 \setminus S$ using definition (\ref{eqn:phiextend}).

We can extend this domain further using Rudin's Theorem 4.9.1 in \cite{rud69}. 
It roughly says that if a holomorphic function $f$ on
$\D^2$ extends analytically to a neighborhood $N_x$ of some
$x=(x_1,x_2) \in \mathbb{T}^2,$ then $f$ extends analytically to an open set
containing $\{x_1\} \times \D$ and $\D \times \{x_2\}.$ As the
edge-of-the-wedge theorem guarantees $\Phi$ extends to a neighborhood
$N_x$ of each $x \in X$, Rudin's Theorem 4.9.1 implies $\Phi$
extends analytically to an open set $\Omega_1$ containing $(X_1 \times \D )
\cup (\D \times X_2).$ The \emph{proof} of
Theorem 4.9.1 implies that $\Omega_1$ only depends on the
$\{N_x\}_{x \in X}$. Thus, $\Phi$ extends analytically to
\[ \Omega := \D^2 \cup \Omega_1 \cup \Omega_0  \cup 
\left( \mathbb{E}^2 \setminus S \right). \]

\textbf{Claim 2: Elements of $\mathcal{H}(K_1)$ and $\mathcal{H}(K_2)$ 
extend analytically to $\Omega.$}\\

Let $(K_1,K_2)$ be Agler kernels of $\Phi$ and let $f \in \mathcal{H}(K_1).$
 By the containment result in Theorem  \ref{thm:containment},
\[
 f =\Phi A_{\mcR_1}f  + (I - \Phi \Phi^*)^{1/2} h,
 \]
for some $h \in L^2(E_*)$ and $A_{\mcR_1}f \in Z_1 L^2_{--}(E).$ Then $g:=
\overline{Z_2 A_{\mcR_1} f} 
\in H^2(E),$ and we can define  $f$ analytically on $\mathbb{E}^2 \setminus S$
as before:
\[ f(z) = \Phi(z) \frac{1}{z_2} \overline{g(1/\bar{z})}. 
\]
Then $f$ is analytic on $W^+ \cup W^-$ and $f=\Phi A_{\mcR_1}f $ on $X$. 
As in the proof of Theorem 1.5 in \cite{bk12}, we can use the
distributional edge-of-the-wedge theorem, which appears as Theorem B in \cite{rudeow}, 
to extend $f$ to $\Omega_0.$
As before, by an application of  Rudin's Theorem 4.9.1 in \cite{rud69}, we can analytically 
extend $f$ to $\Omega_1$, the set containing $X_1 \times \D$ and
$\D \times X_2$ mentioned earlier.  As $f$ is already holomorphic in 
$\D^2 \cup (\mathbb{E}^2 \setminus S),$ we can conclude that every 
$f \in \mathcal{H}(K_1)$ is holomorphic in $\Omega$.\\

\textbf{Claim 3: Points in $\Omega$ are points of bounded evaluation
  in $\mathcal{H}(K_1)$ and $\mathcal{H}(K_2).$}\\

The proof for inner functions given in \cite{bk12}
essentially goes through to give bounded point evaluations in
$\Omega$.  Recall from the previous section that points of $\D^2$
and $\E^2\setminus S$ are points of bounded evaluation for $\mathcal{H}(K_1)$
and $\mathcal{H}(K_2)$. The next step is to show that the set of points of bounded evaluation is relatively
closed in $\Omega$. This follows using the uniform boundedness principle as in \cite{bk12}.
To show evaluation at points of $\Omega_0$ are bounded, we merely note
as we did in \cite{bk12} that the proof of the edge-of-the-wedge
theorem in \cite{rudeow} produces the extended values via an integral 
over a compact subset $K$ of $W^{+}\cup X \cup W^{-}$. Since
evaluation at any point of $K$ is bounded in $\mathcal{H}(K_j)$ and since
elements of $\mathcal{H}(K_j)$ are analytic in a neighborhood of $K$,
\[
\sup\{ \|f(z)\|_{E_*}: z \in K\} < \infty
\]
for each $f \in \mathcal{H}(K_j)$ and therefore by the uniform boundedness
principle there exists $M$ such that
\[
\|f(z)\|_{E_*} \leq M \|f\|_{\mathcal{H}(K_j)} \qquad \forall \ f \in \mathcal{H}(K_j)
\]
and $z \in K$.  So, since values of $f$ in $\Omega_0 $
 are given by an integral of $f$ over $K$, it follows that evaluation
at points in $\Omega_0$ are bounded in $\mathcal{H}(K_j)$. 
Now consider the points in $\Omega_1.$ As Rudin's Theorem 4.9.1 in \cite{rud69}
also constructs the extension of $f$ using values of $f$ at points in compact sets $K \subset \Omega_0$,
the uniform boundedness principle implies that the points in $\Omega_1$ are also
points of bounded evaluation.\\


$(iii) \Rightarrow (ii)$ is immediate. \\

Now consider $(ii) \Rightarrow (i)$.  \\

First, we will show that there is a point $w\in \D^2$ where $\Phi(w)$
is invertible.  To do this, take any sequence $\{z^n\} \subset \D^2$
converging to a point $x \in X \subset \T^2$.  Since elements of
$\mcH(K_j)$ extend continuously to $X$, for each fixed $f \in
\mcH(K_j)$ the set
\[
\{\|f(z^n)\|_{E*}: n =1,2,\dots\}
\]
is bounded.  Therefore by the uniform boundedness principle for
each $j=1,2$ the set
\[
\{ \|f(z^n)\|_{E_*}: f \in \mcH(K_j), \|f\|_{\mcH(K_j)}\leq 1, n=1,2,\dots\}
\]
is bounded by say $M>0$, and this is enough to show evaluation at
$x\in X$ is bounded in $\mcH(K_j)$ and
\[
\| K_j(z^n,z^n) \|_{E_* \rightarrow E_*} \leq M^2 \text{ for each } n \text{ and } \| K_j(x,x)\|_{E_* \rightarrow E_*} 
\leq M^2
\]
for $j=1,2.$ It follows immediately that 
\begin{equation} \label{limsup}
\limsup_{n\to \infty} (1-|z_1^n|^2) K_2(z^n,z^n) = 0 \ \ \text{ and } \ \ \limsup_{n\to \infty} (1-|z_2^n|^2) K_1(z^n,z^n) = 0. 
\end{equation}
This shows that
\[
\lim_{n\to \infty} I-\Phi(z^n) \Phi(z^n)^* = \lim_{n\to \infty}
(1-|z_1^n|^2) K_2(z^n,z^n)+ (1-|z_2^n|^2) K_1(z^n,z^n) = 0
\]
and therefore for some $N \in \mathbb{N}$, $I-\Phi(z^N) \Phi(z^N)^* \leq \frac{1}{2}
I$, which implies $\Phi(z^N)$ is invertible. Set $w=z^N$. Since $\Phi$ satisfies
\[
I-\Phi(z) \Phi(w)^* = (1-z_1\bar{w}_1) K_{2,w}(z) + (1-z_2 \bar{w}_2)
K_{1,w} (z)
\]
we can extend $\Phi$ continuously to $X$ via the formula
\[
\Phi(z) = (I-(1-z_1\bar{w}_1) K_{2,w}(z) - (1-z_2 \bar{w}_2)
K_{1,w} (z))(\Phi(w)^*)^{-1}
\]
since the right hand side is assumed to be continuous.  

Finally, $\Phi$ is unitary on $X$ since for any $x\in X$, if we take a
sequence $\{z^n\}$ in $\D^2$ converging to $x$ as above, then we will
again get the result in \eqref{limsup}.  However, now that we know
$\Phi$ is continuous at $x$,
\[
0=\lim_{n\to \infty} I-\Phi(z^n) \Phi(z^n)^* = I-\Phi(x)\Phi(x)^*,
\]
which completes the proof. \end{proof}

\subsection{Canonical Realizations} \label{sect:tfr}
Unlike the previous section, we no longer assume $E,E_{*}$ are finite
dimensional. Let $\Phi \in \mathcal{S}_1(E,E_*)$ and define its de Branges-Rovnyak 
space $\mathcal{H}_{\Phi}$ to be the Hilbert space with reproducing
kernel 
\[ K_{\Phi}(z,w) := \frac{ I -\Phi(z) \Phi(w)^*} {1-z\bar{w}}.\]
Then, $\Phi$ has an (almost) unique coisometric
transfer function realization with state space equal to $\mathcal{H}_{\Phi}$
and colligation defined by
\[ U := 
\left[ \begin{array}{cc} 
A & B \\
C & D 
\end{array} \right] : 
\left[  \begin{array}{c}
\mathcal{H}_{\Phi} \\
E 
 \end{array}  \right]
\rightarrow 
\left[  \begin{array}{c}
\mathcal{H}_{\Phi} \\
E_* 
 \end{array} \right]  \]
with block operators given by
\[ \begin{aligned}
A&: f(z) \mapsto \frac{f(z) - f(0)}{z}\ \  && B:  e  \mapsto \frac{\Phi(z) - \Phi(0)}{z} e  \\ 
C &:  f(z) \mapsto f(0) \ \  &&D: e \mapsto \Phi(0)e. 
\end{aligned}
\]
Then, $ \Phi(z) = D + Cz \left( I - Az \right)^{-1}B$, and 
this representation is unique up to a minimality condition and unitary equivalence \cite{bb11}.

In two variables, transfer function realizations are more complicated
and rarely unique.
Traditionally, T.F.R.'s associated to $\Phi \in \mathcal{S}_2(E,E_*)$ are
constructed using Agler kernels $(K_1,K_2)$ of $\Phi$.
In \cite{bb11}, Ball-Bolotnikov studied T.F.R.'s defined using pairs of Agler
kernels and obtained partial characterizations of the associated block
operators $A$, $B$, $C$, and $D.$ Refined results about unitary T.F.R.'s 
for a subclass of $\mathcal{S}_d(\mathbb{D}^d)$ appear in \cite{bkvsv}; 
these are constructed in the related, but different setting of minimal 
augmented Agler decompositions.

Nevertheless, open questions about the structure of Agler kernels often
go hand in hand with open questions about the structure of T.F.R.'s. 
In this section, we use our previous analysis to clear up one such question.
Specifically, we use the concrete Agler kernels $(K^{max}_1, 
K^{min}_2)$ to construct a coisometric T.F.R. with an explicit state space  $\mathcal{M}$
and colligation $U.$ The construction answers a question 
posed by Ball and Bolotnikov in \cite{bb11}. 

\begin{rem}\label{rem:contfr}\textbf{Constructing Transfer Function Realizations.}
There is a canonical way to obtain transfer function realizations 
from Agler kernels. To illustrate this method, let $(K_1,K_2)$ be Agler
kernels of $\Phi$. Then, they satisfy
\begin{equation} \label{eqn:agform2}
 I_{E_*} - \Phi(z) \Phi(w)^* 
= (1 -z_1 \bar{w}_1) K_2(z,w) + (1-z_2\bar{w}_2) K_1(z,w). 
\end{equation}
Define the kernel functions $K_{j,w}\nu (z):= K_j(z,w) \nu$ and define
the operator $V$ by 
\[
 V: \begin{bmatrix} \bar{w}_1 K_{2,w} \nu \\
\bar{w}_2 K_{1,w} \nu \\
\nu \end{bmatrix} \mapsto 
\begin{bmatrix} K_{2,w} \nu \\
K_{1,w} \nu \\
\Phi(w)^* \nu \end{bmatrix}
\quad \forall \ w \in \mathbb{D}^2, \ \nu \in E_*.
\]
Then (\ref{eqn:agform2}) guarantees that V can be extended 
to an isometry mapping the space
\beq \mathcal{D}_V := \bigvee_{w \in \D^2, \nu \in E_*}   
 \begin{bmatrix} \bar{w}_1 K_{2,w} \nu \\
\bar{w}_2 K_{1,w} \nu \\
\nu \end{bmatrix} \subseteq \mathcal{H}(K_2) 
\oplus \mathcal{H}(K_1) \oplus E_* \eeq
onto the space
\[  \mcR_V := \bigvee_{w \in \D^2, \nu \in E_*}   
 \begin{bmatrix} K_{2,w} \nu \\
K_{1,w} \nu \\
\Phi(w)^* \nu \end{bmatrix}  \subseteq \mathcal{H}(K_2) \oplus
 \mathcal{H}(K_1) \oplus E.
 \]
Transfer function realizations with state space $\mathcal{H}(K_2) \oplus \mathcal{H}(K_1)$
are obtained by extending $V$ to a
contraction from 
\[ \mathcal{H}(K_2) \oplus \mathcal{H}(K_1) \oplus E  \rightarrow  
\mathcal{H}(K_2) \oplus \mathcal{H}(K_1) \oplus E_*\] 
and 
setting $U=V^*.$  In Ball-Bolotnikov \cite{bb11}, such a $U$ is called
a \emph{canonical functional model (c.f.m.) colligation} of $\Phi$ associated
to $(K_1, K_2).$ Similarly, 
coisometric transfer function realizations are obtained by extending 
$V$ to an isometry mapping 
\[ \mathcal{H}(K_2) \oplus \mathcal{H}(K_1) \oplus \mathcal{H} \oplus E \rightarrow
\mathcal{H}(K_2) \oplus \mathcal{H}(K_1) \oplus \mathcal{H} \oplus E_*,\] 
where $\mathcal{H}$
is an arbitrary infinite dimensional Hilbert space only added in when required, and $U$ is defined to be $V^*.$
\end{rem}

\begin{question} Let $\Phi \in \mathcal{S}(E, E_*)$.  Currently, it is
  an open question as to whether there always exists a coisometric
  transfer function realization of $\Phi$ with state space
  $\mathcal{H}(K_2) \oplus \mathcal{H}(K_1)$ for every pair of Agler
  kernels $(K_1,K_2)$.  In Section 3.2 of \cite{bb11}, Ball-Bolotnikov
  posed the following related question, which was originally stated in the
  d-variable setting:

  {\begin{center} Let $\Phi \in \mathcal{S}_2(E, E_*)$. Is there
      \emph{any} pair of Agler kernels $(K_1,K_2)$ of $\Phi$ such that
      $\Phi$ has a \emph{coisometric} c.f.m. colligation associated to $(K_1, K_2)$?
       \end{center}}
This is equivalent to asking if the construction in Remark \ref{rem:contfr} 
gives a coisometric transfer function realization of $\Phi$ with state space 
$\mathcal{H}(K_2) \oplus \mathcal{H}(K_1).$
\end{question}

The following theorem answers that question in the affirmative.

\begin{thm} \label{thm:canonicalcmf} Let $\Phi \in \mathcal{S}_2(E,E_*)$ and 
consider its Agler kernels $(K^{max}_1, K^{min}_2).$
The construction in Remark \ref{rem:contfr} gives a unique, coisometric transfer function 
realization of $\Phi$ with state space $\mathcal{H}(K^{min}_2) \oplus \mathcal{H}(K^{max}_1).$ \end{thm}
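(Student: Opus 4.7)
The plan is to show that the canonical Agler kernels $(K^{max}_1, K^{min}_2)$ have the key property that the subspace $\mathcal{D}_V$ from Remark \ref{rem:contfr} is dense in $\mathcal{H}(K^{min}_2) \oplus \mathcal{H}(K^{max}_1) \oplus E_*$. Once density is established, $V$ (which is isometric on $\mathcal{D}_V$ by the Agler identity for $(K^{max}_1, K^{min}_2)$) extends uniquely by continuity to an isometry on the whole domain, so $U = V^*$ is coisometric. The standard c.f.m.~computation in Remark \ref{rem:contfr} then automatically yields the realization formula $\Phi(z) = D + C(I_\mathcal{M} - \mathcal{E}_z A)^{-1} \mathcal{E}_z B$, and uniqueness of the coisometric realization follows from uniqueness of the continuous extension.

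A routine reproducing-kernel calculation shows that $(f, g, \nu_0) \perp \mathcal{D}_V$ is equivalent to the identity $w_1 f(w) + w_2 g(w) + \nu_0 \equiv 0$ on $\D^2$. Setting $w = 0$ forces $\nu_0 = 0$, and restricting the identity $z_1 f + z_2 g \equiv 0$ to the coordinate axes $\{z_1 = 0\}$ and $\{z_2 = 0\}$ shows that $f$ is divisible by $z_2$ and $g$ by $z_1$; matching power series produces $h \in H^2(E_*)$ with $f = -z_2 h$ and $g = z_1 h$. It suffices to show $g = 0$, as then $z_1 f = -z_2 g = 0$ forces $f = 0$.

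The main step is a scattering-model computation. Lift $g$ to $G \in S^{max}_1 \ominus Z_1 S^{max}_1 = \mcR_1 \ominus Z_1 \mcR$ (via Proposition \ref{prop:smax}) with $V^* G = g$, and write $G = \vecs{g}{g_G}$ with $g_G \in Z_1 L^2_{--}(E)$. Then $\bar Z_1 G = \vecs{h}{\bar z_1 g_G}$ lies in $\scrH$: the first component $h$ is in $H^2(E_*)$, the action of $\bar z_1$ shifts the Fourier support of $g_G$ from $Z_1 L^2_{--}$ down into $L^2_{--}(E)$, and the $\Delta$- and $\Delta_{*}$-compatibility conditions from Remark \ref{rem:hspace} survive multiplication by $\bar z_1$ because $\Phi$, $\Delta$, and $\Delta_{*}$ all commute with coordinate multiplications. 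Thus $\bar Z_1 G \in \mcR$. On the other hand, the wandering relation $G \perp Z_1 S^{max}_1$ combined with unitarity of $Z_1$ on $\scrH$ gives $\bar Z_1 G \perp S^{max}_1$. Since $\mcR \subseteq S^{max}_1$, these two facts force $\bar Z_1 G = 0$; unitarity of $\bar Z_1$ then yields $G = 0$ and therefore $g = 0$. The main obstacle is the verification $\bar Z_1 G \in \scrH$: while the support check is immediate, the $\Delta_{*}$-compatibility needs the full strength of the two characterizations of $\scrH$ in Remark \ref{rem:hspace}, and it is precisely the asymmetry $\mcR \subseteq S^{max}_1$ (rather than $\mcR \subseteq S^{min}_2$) that makes the argument collapse for $G$ while leaving the symmetric attempt with $F$ inconclusive.
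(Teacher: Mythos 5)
Your proof is correct and takes essentially the same route as the paper's: you characterize the orthogonal complement of $\mathcal{D}_V$ via the relation $z_1 f + z_2 g \equiv 0$, lift the $\mathcal{H}(K_1^{max})$ component to $S_1^{max}\ominus Z_1 S_1^{max} = \mcR_1 \ominus Z_1\mcR$, and exploit the residual space $\mcR$ to force it to vanish. The only cosmetic difference is that you work with $\bar Z_1 G$ and use $\bar Z_1 G \perp S_1^{max}$ (via unitarity of $Z_1$), whereas the paper keeps the element $Z_1\vecs{F}{G}$ and uses that it lies in both $Z_1\mcR$ and its orthogonal complement in $\mcR_1$ — these are the same argument under the unitary $Z_1$.
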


\begin{proof} Consider the construction in Remark \ref{rem:contfr}
using Agler kernels  $(K^{max}_1, K^{min}_2)$.
The operator 
$V$ is initially defined by 
\beq V: \begin{bmatrix} \bar{w}_1 K^{min}_{2,w} \nu \\
\bar{w}_2 K^{max}_{1,w} \nu \\
\nu \end{bmatrix} \mapsto 
\begin{bmatrix} K^{min}_{2,w} \nu \\
K^{max}_{1,w} \nu \\
\Phi(w)^* \nu \end{bmatrix}
\quad \forall \ w \in \mathbb{D}^2, \ \nu \in E_* \eeq
and extended to an isometry on the space
\beq \mathcal{D}_V := \bigvee_{w \in \D^2, \nu \in E_*}   
\begin{bmatrix} \bar{w}_1 K^{min}_{2,w} \nu \\
\bar{w}_2 K^{max}_{1,w} \nu \\
\nu \end{bmatrix} \subseteq \mathcal{H}(K^{min}_2) 
\oplus \mathcal{H}(K^{max}_1) \oplus E_*. \eeq
Then, transfer function realizations with state space $\mathcal{H}(K_2) \oplus
\mathcal{H}(K_1)$ are obtained by extending
$V$ to a contraction on $\mathcal{H}(K^{min}_2) \oplus \mathcal{H}(K^{max}_1) 
\oplus E_*$. We will show $\mathcal{D}_V = 
\mathcal{H}(K^{min}_2) 
\oplus \mathcal{H}(K^{max}_1) \oplus E_*.$ 
Then, the result will follow because $V$ will already be an isometry on 
$\mathcal{H}(K^{min}_2) \oplus \mathcal{H}(K^{max}_1) 
\oplus E_*$ and so we can immediately set $U=V^*$.
Define
\[
\mathcal{D}  := \bigvee_{w \in \D^2, \nu \in E_*}   
 \begin{bmatrix} \bar{w}_1 K^{min}_{2,w} \nu \\
\bar{w}_2 K^{max}_{1,w}\nu  \end{bmatrix} 
\subseteq \mathcal{H}(K^{min}_2) \oplus \mathcal{H}(K^{max}_1). 
\]
Examining the case $w=0$ shows that $\mathcal{D}_V$ coincides with
$\mathcal{D} \oplus E_*$, so it suffices to show $\mathcal{D}=
\mathcal{H}(K^{min}_2) \oplus \mathcal{H}(K^{max}_1).$ 
Assume 
\beq \begin{bmatrix} f_2 \\ f_1 
\end{bmatrix} \in \left[ \mathcal{H}(K^{min}_2) 
\oplus \mathcal{H}(K^{max}_1) \right] \ominus \mathcal{D}. \eeq
Then for each $w \in \mathbb{D}^2$ and $\nu \in E_*$, 
\begin{align*} 0 &= \LL \begin{bmatrix} f_2 \\ f_1 
\end{bmatrix}, \begin{bmatrix} \bar{w}_1 K^{min}_{2,w} \nu \\
\bar{w}_2 K^{max}_{1,w}\nu  \end{bmatrix} \RR_{\mathcal{H}(K^{min}_2) 
\oplus \mathcal{H}(K^{max}_1)} \\
&& \\
&=  w_1 \LL f_2,  K^{min}_{2,w}\nu \RR_{\mathcal{H}(K^{min}_2)}+
 w_2 \LL f_1,  K^{max}_{1,w}\nu \RR_{\mathcal{H}(K^{max}_1)} \\
 &\\
& = \LL w_1f_2(w) + w_2f_1(w), \nu \RR_{E_*},
\end{align*}
which implies $Z_1f_2 + Z_2f_1 = 0.$ Thus, there is some $F \in H^2(E_*)$
such that $f_1 =Z_1 F.$ Now, since $f_1 \in \mathcal{H}(K^{max}_1)$, there
is a $g_1 \in Z_1 L^2_{--}(E)$ such that
\begin{equation} \label{eqn:maxcon} \begin{bmatrix}
f_1 \\
g_1 
\end{bmatrix} \in \mcR_1
\ominus Z_1 \mcR.\end{equation}
This also gives $g_1 - \Phi^* f_1 \in \Delta L^2(E)$ and  a 
$G \in L^2_{--}(E)$ with $g = Z_1 G.$  Since $\Delta L^2(E)$ 
is invariant under $Z^*_1$, is is clear that 
$G - \Phi^* F\in \Delta L^2(E)$ as well. Then
\beq \begin{bmatrix}
f_1 \\
g_1 
\end{bmatrix} 
= Z_1  \begin{bmatrix}
F \\
G 
\end{bmatrix}
\text{ and }  \begin{bmatrix}
F \\
G 
\end{bmatrix} \in \mcR. \eeq
Given this, (\ref{eqn:maxcon}) forces $f_1 \equiv 0$, so
 $f_2 \equiv 0$ and $\mathcal{D} 
= \mathcal{H}(K^{min}_2) \oplus  \mathcal{H}(K^{max}_1).$
\end{proof}

\begin{rem}\textbf{The Canonical Block Operators.} Let 
$U$ be the operator associated to the transfer function 
realization given in Theorem \ref{thm:canonicalcmf}. 
Much can be said about its block operators $A,B,C,D$. 
In the setting of general $(K_1,K_2)$, much of this analysis 
already appears in \cite{bb10} and \cite{bb11}. We will first give the formulas
for $A,B,C,D$ and then discuss the derivations.
 Specifically, for every $ f := \begin{bmatrix}
f_1 \\ 
f_2
\end{bmatrix}  \in \mathcal{H}(K^{min}_2) \oplus \mathcal{H}(K^{max}_1)$
and $\eta \in E$, 
\[ C: 
\begin{bmatrix}
f_1 \\ 
f_2
\end{bmatrix}
\mapsto 
f_1(0) + f_2(0) 
\ \text{ and } \
D: \eta \mapsto \Phi(0) \eta.\]
For $A$ and $B$, let us first simplify notation by setting
\[ 
\begin{bmatrix}
(Af)_1 \\
(Af)_2 
\end{bmatrix} 
:=A
 \begin{bmatrix}
f_1 \\
f_2
\end{bmatrix}
\ \text{ and } 
\begin{bmatrix}
(B \eta)_1 \\
(B \eta)_2
\end{bmatrix} 
:= B \eta. \]
Then $(Af)_2$ and $(B \eta)_2$ are the unique functions in $\mathcal{H}(K^{max}_1)$ satisfying
\begin{align*}
 \left(Af \right)_2 (0, w_2) &= \frac{ f_1(0, w_2) - f_1(0) + f_2(0,w_2)-f_2(0)}{w_2} \\
\left( B \eta \right)_2(0,w_2) & = \frac{\Phi(0,w_2)- \Phi(0)}{w_2} \eta,
\end{align*}
for all $w_2 \in \mathbb{D} \setminus \{0\},$ and $(Af)_1$ and $(B \eta)_1$ are the unique functions in $\mathcal{H}(K^{min}_2)$ satisfying
\begin{align*}  \left( Af \right)_1(w) &= \frac{f_1(w) - f_1(0) + f_2(w)-f_2(0) -w_2 \left(Af \right)_2 (w)}{w_1} \\
 \left( B \eta \right)_1(w) & = \frac{\left( \Phi(w)- \Phi(0) \right)\eta - w_2 \left( B \eta \right)_2(w)}{w_1},
\end{align*}
for all $w \in \mathbb{D}^2$ with $w_1 \ne 0.$ The results for $C$ and $D$ follow because, by definition
\[ U^* = 
\begin{bmatrix}
A^* & C^* \\
B^* & D^* 
\end{bmatrix}
: \begin{bmatrix} \bar{w}_1 K_{2,w}^{min} \nu \\
\bar{w}_2 K_{1,w}^{max} \nu \\
\nu \end{bmatrix} \mapsto 
\begin{bmatrix}  K_{2,w}^{min} \nu \\
K_{1,w}^{max} \nu \\
\Phi(w)^* \nu \end{bmatrix}
\quad \forall \ w \in \mathbb{D}^2, \ \nu \in E_*.
\]
Setting $w=0$ immediately implies that
\[ C^*: \nu \mapsto 
\begin{bmatrix}
 K^{min}_{2,0}\nu \\
 K^{max}_{1,0} \nu 
  \end{bmatrix}
\text{ and }
D^*: \nu \mapsto \Phi(0)^*\nu
\]
for all $\nu \in E_*$. Then the calculations
\[ \LL C 
\begin{bmatrix} f_1 \\
f_2
\end{bmatrix},
\nu
\RR_{E_*} 
=\LL 
\begin{bmatrix} f_1 \\
f_2
\end{bmatrix},
\begin{bmatrix}
 K^{min}_{2,0} \nu \\
 K^{max}_{1,0}\nu
  \end{bmatrix} \RR_{\mathcal{H}(K^{min}_2) \oplus \mathcal{H}(K^{max}_1)} 
  =
  \LL f_1(0) + f_2(0), \nu \RR_{E_*}
   \]
and 
\[ \LL D \eta, \nu \RR_{E_*}
= \LL \eta, D^* \nu \RR_{E}
=\LL \eta, \Phi(0)^* \nu \RR_{E}
=\LL \Phi(0) \eta, \nu \RR_{E_*}
\]  
give the formulas for $C$ and $D$. Moreover,
The results about  $C^*$ and $D^*$ imply that
\[ A^*: 
\begin{bmatrix}
\bar{w}_1 K_{2,w}^{min} \nu \\
\bar{w}_2 K_{1,w}^{max} \nu
\end{bmatrix}
\mapsto
\begin{bmatrix}
\left( K_{2,w}^{min} - K^{min}_{2,0} \right) \nu \\
\left( K_{1,w}^{max} - K^{max}_{1,0} \right) \nu
\end{bmatrix}\]
 and 
\[
B^*: 
\begin{bmatrix}
\bar{w}_1 K_{2,w}^{min} \nu \\
\bar{w}_2 K_{1,w}^{max} \nu
\end{bmatrix}
\mapsto
\left( \Phi(w)^* - \Phi(0)^* \right) \nu.\]
Then 
\[ 
\begin{aligned}
\LL w_1(Af)_1 (w) + w_2(Af)_2 (w), \nu \RR_{E_*} &= \LL Af, \begin{bmatrix}
\bar{w}_1 K_{2,w}^{min} \nu \\
\bar{w}_2 K_{1,w}^{max} \nu
\end{bmatrix}\RR_{\mathcal{H}(K^{min}_2) \oplus \mathcal{H}(K^{max}_1)}\\
&\\
& = \LL \begin{bmatrix}
f_1 \\
f_2
\end{bmatrix}, 
\begin{bmatrix}
\left( K_{2,w}^{min} - K^{min}_{2,0} \right) \nu \\
\left( K_{1,w}^{max} - K^{max}_{1,0} \right) \nu
\end{bmatrix} \RR_{\mathcal{H}(K^{min}_2) \oplus \mathcal{H}(K^{max}_1)}
 \\
&\\
& = \LL f_1(w) - f_1(0) + f_2(w) - f_2(0), \nu \RR_{E_*},
\end{aligned}
\]
and similarly,
\[ 
\LL w_1(B \eta)_1 (w) + w_2(B \eta)_2 (w), \nu \RR_{E_*} 
= \LL \left( \Phi(w) - \Phi(0) \right)\eta, \nu \RR_{E_*}.
\]
Therefore, we have
\begin{align}
\label{eqn:gleason1}  w_1 \left( Af \right)_1(w) + w_2 \left(Af \right)_2 (w) &= f_1(w) - f_1(0) + f_2(w)-f_2(0) \\
\label{eqn:gleason2}  w_1 \left( B \eta \right)_1(w) + w_2 \left( B \eta \right)_2(w) & = \left( \Phi(w)- \Phi(0) \right)\eta.
\end{align}
Operators that solve $(\ref{eqn:gleason1})$ or $(\ref{eqn:gleason2})$ are said to solve the structured Gleason problem
for $\mathcal{H}(K^{min}_2) \oplus \mathcal{H}(K^{max}_1)$  or for $\Phi$, respectively. In general, such operators
are not unique. However, in this situation, $A$ and $B$ are uniquely determined. The proof of this rests on two observations.
First, when $w_1=0$ and $w_2 \ne 0,$ $(\ref{eqn:gleason1})$ and $(\ref{eqn:gleason2})$ become
\begin{align}
\label{eqn:gleason3}   \left(Af \right)_2 (0, w_2) &= \frac{ f_1(0, w_2) - f_1(0) + f_2(0,w_2)-f_2(0)}{w_2} \\
\label{eqn:gleason4}   \left( B\eta \right)_2(0,w_2) & = \frac{ \Phi(0,w_2)- \Phi(0)}{w_2}\eta.
\end{align}
It is also true that the set $\{ (0,w_2) : w_2 \in \mathbb{D} \setminus \{0\}\}$ is a set of uniqueness for $\mathcal{H}(K^{max}_1).$ Indeed,
suppose two functions $g_1, g_2 \in \mathcal{H}(K^{max}_1)$ satisfy $g_1(0,w_2) = g_2(0,w_2)$ for all $w_2 \ne 0$. This immediately
implies $g_1(0,0)=g_2(0,0)$ and
\[ g_1- g_2 =Z_1 h \] for some $h \in H^2(E_*).$ Arguments identical
to those in the proof of Theorem \ref{thm:canonicalcmf} show that $h$
must be zero, so $g_1=g_2.$  As $\left(Af \right)_2$ and $\left( B \eta \right)_2$
are in $\mathcal{H}(K^{max}_1)$, they must be the unique such
functions satisfying $(\ref{eqn:gleason3})$ and $(\ref{eqn:gleason4})$
respectively. Then, the other components $\left(Af \right)_1$ and
$\left( B \eta \right)_1$ are uniquely determined by
$(\ref{eqn:gleason1})$ and $(\ref{eqn:gleason2}).$ In one-variable,
$Af$ and $B \eta$ can be explicitly written in terms of $f$ and
$\eta.$ Given that, our characterizations of $A$ and $B$ seem slightly
unsatisfying. This motivates the question
\begin{question} Assume $g \in \mathcal{H}(K^{max}_1)$. Is there an explicit way to construct $g$ using only the function $g(0,w_2)?$ 
\end{question}
A clean answer would also provide nice formulas for the operators $A$ and $B$. 
It seems possible that the refined results in \cite{bkvsv} about unitary T.F.R.'s associated to minimal 
augmented Agler decompositions might suggest methods of answering this question.
\end{rem}

\section{Appendix: Vector valued RKHS's} \label{sect:opkernels}

In this section, we record several facts about vector valued reproducing 
kernel Hilbert spaces that were used in earlier sections. The results are 
well-known in the scalar valued case. See, for example \cite{aro50}, 
\cite{bv03b}, Chapter 2 in \cite{alp01}, and Chapter 2 in \cite{ ampi}. 
We outline how the needed vector valued results follow from the known 
scalar valued results. Let $\Omega$ be a set and $E$ be a separable Hilbert
space. We will frequently use the following observation:

\begin{rem} For each function $f: \Omega \rightarrow E$ there is an 
associated scalar valued function $\tilde{f}: \Omega \times E 
\rightarrow \mathbb{C}$ defined as follows:
\[
 \tilde{f}(z, \eta) := \LL f(z), \eta \RR_{E}. 
\]
If functions $f,g: \Omega \rightarrow E$ and $\tilde{f} \equiv \tilde{g}$, 
then $f \equiv g.$
\end{rem}

\begin{defn} \label{defn:scalarhs} Let $\mathcal{H}(K)$ be a reproducing 
kernel Hilbert space of $E$ valued functions on $\Omega$.
For $w\in \Omega$ and  $\nu \in E$, define the function $K_w \nu:= K( \cdot, w)\nu.$ An associated
reproducing kernel Hilbert space of scalar valued functions on $\Omega \times E$
 can be defined as follows: Define the set of functions
\[ \mathcal{H} := \left \{ \tilde{f}:  f \in \mathcal{H}(K) \right \} 
\]
and equip $\mathcal{H}$ with the inner product 
\[
 \LL \tilde{f}, \tilde{g} \RR_{\mathcal{H}} 
= \LL f ,g \RR_{\mathcal{H}(K)}. 
\]
It is routine to show that $\mathcal{H}$ is a Hilbert space 
with this inner product and since
\[
 \tilde{f}(w,\nu) =  \LL f(w), \nu \RR_{E} 
= \LL f, K_w \nu \RR_{\mathcal{H}(K)} 
= \LL \tilde{f}, \widetilde{ K_w \nu} \RR_{\mathcal{H}},
\]
$\mathcal{H}$ is a reproducing kernel Hilbert space with reproducing kernel
\[
L \big( (z,\eta), (w,\nu) \big) := \widetilde{K_w\nu} ( z, \eta) 
= \LL K(z,w) \nu, \eta \RR_{E} = \eta^* K(z,w) \nu.
\]
Then $f \in \mathcal{H}(K)$ if and only if $\tilde{f} \in \mathcal{H}(L)$.
 It is also clear that $\|f \|_{\mathcal{H}(K)} = \| \tilde{f}\|_{\mathcal{H}(L)}.$ 
 \end{defn}

The following results are well-known for scalar valued reproducing kernel 
Hilbert spaces and follow easily for vector valued reproducing kernel Hilbert spaces.

\begin{thm} \label{thm:kerdiff} Let $\mathcal{H}(K)$ and $\mathcal{H}(K_1)$ 
be reproducing kernel Hilbert spaces of $E$ valued functions on $\Omega$. 
Then $\mathcal{H}(K_1) \subseteq \mathcal{H}(K)$ contractively if and only if 
\[
K(z,w) - K_1(z,w) \text{ is a positive kernel.} \] \end{thm}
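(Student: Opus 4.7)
The plan is to reduce the vector-valued statement to the well-known scalar-valued Aronszajn inclusion theorem, using the lift from Definition \ref{defn:scalarhs}. Write $L$ and $L_1$ for the scalar kernels on $\Omega \times E$ associated to $K$ and $K_1$, so $L((z,\eta),(w,\nu)) = \eta^* K(z,w) \nu$ and similarly for $L_1$, and the map $f \mapsto \tilde f$ is an isometric bijection $\mathcal H(K) \to \mathcal H(L)$ (and analogously for $K_1, L_1$). The scalar version of the theorem, applied to $L$ and $L_1$, asserts that $\mathcal H(L_1) \subseteq \mathcal H(L)$ contractively if and only if $L - L_1$ is a positive kernel on $\Omega \times E$. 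The task is therefore to transfer \emph{both} conditions from the scalar side to the operator-valued side.

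First I would verify the positivity dictionary: $K - K_1$ is a positive $\mathcal L(E)$-valued kernel on $\Omega$ if and only if $L - L_1$ is a positive scalar kernel on $\Omega \times E$. This is essentially tautological. Testing positivity of $L - L_1$ at finitely many points $(z_i,\eta_i) \in \Omega \times E$ with scalar weights $c_i$ produces, by sesquilinearity, a sum of the form
\[
\sum_{i,j} \bar c_i c_j \, \eta_i^*\bigl(K(z_i,z_j) - K_1(z_i,z_j)\bigr) \eta_j = \sum_{i,j} \bigl\langle (K-K_1)(z_i,z_j)(c_j\eta_j), c_i\eta_i\bigr\rangle_E,
\]
which is non-negative for all such choices precisely when $K - K_1$ is positive in the operator-valued sense.

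Second I would transfer the contractive containment. If $\mathcal H(K_1) \subseteq \mathcal H(K)$ contractively, then for each $f \in \mathcal H(K_1)$ the function $\tilde f$ lies in both $\mathcal H(L_1)$ and $\mathcal H(L)$ with $\|\tilde f\|_{\mathcal H(L)} = \|f\|_{\mathcal H(K)} \le \|f\|_{\mathcal H(K_1)} = \|\tilde f\|_{\mathcal H(L_1)}$, giving the scalar contractive containment. Conversely, if $\mathcal H(L_1) \subseteq \mathcal H(L)$ contractively, take $f \in \mathcal H(K_1)$; then $\tilde f \in \mathcal H(L_1) \subseteq \mathcal H(L)$, so there exists $f' \in \mathcal H(K)$ with $\widetilde{f'} = \tilde f$, and by the injectivity observation in the remark preceding Definition \ref{defn:scalarhs} we have $f = f' \in \mathcal H(K)$ with $\|f\|_{\mathcal H(K)} = \|\tilde f\|_{\mathcal H(L)} \le \|\tilde f\|_{\mathcal H(L_1)} = \|f\|_{\mathcal H(K_1)}$.

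Combining these two dictionaries with the scalar Aronszajn theorem gives the result. There is no genuine obstacle here: the only points to watch are that the test vectors in the scalar positivity condition range over all of $E$ (so the operator inequalities are recovered) and that the isometric bijection $f \leftrightarrow \tilde f$ is compatible with the inclusions, both of which are immediate from Definition \ref{defn:scalarhs}.
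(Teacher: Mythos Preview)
Your proposal is correct and follows exactly the paper's own approach: reduce to the scalar Aronszajn inclusion theorem via the lift $f \mapsto \tilde f$ of Definition~\ref{defn:scalarhs}, and check that both the contractive-containment condition and the positivity condition pass back and forth under this lift. In fact you have written out more detail than the paper does---the paper declares both dictionary steps ``routine'' and leaves them to the reader, whereas you have supplied the verifications.
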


\begin{proof} As in Definition \ref{defn:scalarhs}, consider the Hilbert spaces 
$\mathcal{H}(L)$ and $\mathcal{H}(L_1)$ of scalar valued functions on 
$\Omega \times E$ with reproducing kernels given by
\[
  L \big( (z,\eta), (w,\nu) \big) :=\eta^* K(z,w)  \nu \ \ \text{ and }  
L_1 \big( (z,\eta), (w,\nu) \big) :=\eta^* K_1(z,w)  \nu.
\]
It is routine to show that $\mathcal{H}(K_1) \subseteq \mathcal{H}(K)$
contractively if and only if $\mathcal{H}(L_1) \subseteq
\mathcal{H}(L)$ contractively. It follows from well-known scalar
results, which appear on page 354 of \cite{aro50}, that
$\mathcal{H}(L_1) \subseteq \mathcal{H}(L)$ contractively if and only
if
\[
 L(z,w) - L_1(z,w) \text{ is a positive kernel.} 
\]
The result follow from the fact that $L(z,w) - L_1(z,w)$ is a 
positive kernel if and only if $K(z,w)-K_1(z,w)$ is a positive kernel.
\end{proof}

Similarly, the following two results can be deduced from the 
scalar-valued case:

\begin{thm} \label{thm:kermult} Let $\mathcal{H}(K)$ be a 
reproducing kernel Hilbert space of $E$ valued functions on 
$\Omega$ and let $\psi: \Omega \rightarrow \mathbb{C}$. 
Then $\psi$ is a multiplier of $\mathcal{H}(K)$ with multiplier 
norm bounded by one if and only if
\[
 \big( 1 - \psi(z) \overline{ \psi(w)} \big) K(z,w) \text{ is a positive kernel}. 
 \]
\end{thm}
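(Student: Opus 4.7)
The plan is to mimic the reduction to the scalar case used in the proof of Theorem \ref{thm:kerdiff}, applying it to the contractive multiplier characterization rather than to containment. Following Definition \ref{defn:scalarhs}, I would associate to $\mathcal{H}(K)$ the scalar-valued reproducing kernel Hilbert space $\mathcal{H}(L)$ on $\Omega \times E$ with kernel
\[
L\big((z,\eta),(w,\nu)\big) = \eta^* K(z,w) \nu,
\]
via the isometry $f \mapsto \tilde{f}$, where $\tilde{f}(z,\eta) = \langle f(z),\eta\rangle_E$.

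The first key step is to transfer the multiplier statement to the scalar setting. Define $\tilde{\psi}:\Omega \times E \to \mathbb{C}$ by $\tilde{\psi}(z,\eta) := \psi(z)$. Then for every $f \in \mathcal{H}(K)$,
\[
\widetilde{\psi f}(z,\eta) = \langle \psi(z) f(z),\eta\rangle_E = \psi(z)\,\tilde{f}(z,\eta) = \tilde{\psi}(z,\eta)\,\tilde{f}(z,\eta).
\]
Combined with the norm identity $\|f\|_{\mathcal{H}(K)} = \|\tilde{f}\|_{\mathcal{H}(L)}$, this shows that $\psi$ is a contractive multiplier of $\mathcal{H}(K)$ if and only if $\tilde{\psi}$ is a contractive multiplier of $\mathcal{H}(L)$.

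Next I would invoke the well-known scalar-valued characterization of contractive multipliers (see the references cited just before Theorem \ref{thm:kerdiff}): $\tilde{\psi}$ is a contractive multiplier of $\mathcal{H}(L)$ if and only if
\[
\big(1 - \tilde{\psi}(z,\eta)\overline{\tilde{\psi}(w,\nu)}\big) L\big((z,\eta),(w,\nu)\big) = \big(1 - \psi(z)\overline{\psi(w)}\big)\,\eta^* K(z,w) \nu
\]
is a positive scalar kernel on $\Omega \times E$. Finally, the last link is the standard observation (already exploited in the proof of Theorem \ref{thm:kerdiff}) that an $\mathcal{L}(E)$-valued kernel $M(z,w)$ on $\Omega$ is positive if and only if $\eta^* M(z,w) \nu$ is a positive scalar kernel on $\Omega \times E$; applying this to $M(z,w) = (1-\psi(z)\overline{\psi(w)})K(z,w)$ closes the circle.

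There is no serious obstacle here: the entire argument is a bookkeeping translation between the vector-valued and scalar-valued pictures, and the only genuinely non-trivial input, the scalar contractive multiplier theorem, is quoted. The one point that deserves a line of justification is the identity $\widetilde{\psi f} = \tilde{\psi}\,\tilde{f}$ together with the observation that the multiplier norms on the two sides coincide, since this is what makes the reduction to the scalar case legitimate.
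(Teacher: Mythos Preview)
Your proposal is correct and follows essentially the same approach as the paper's own proof: reduce to the scalar RKHS $\mathcal{H}(L)$ via Definition \ref{defn:scalarhs}, invoke the scalar contractive-multiplier criterion, and translate the positivity condition back. You are slightly more explicit in writing the lifted multiplier as $\tilde{\psi}(z,\eta)=\psi(z)$ and verifying $\widetilde{\psi f}=\tilde{\psi}\,\tilde{f}$, whereas the paper simply asserts the equivalence of the two multiplier statements, but the content is identical.
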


\begin{proof} When we say ``$\psi$ is a multiplier of $\mathcal{H}(K)$,"
we mean that $\psi \otimes I_{\mathcal{H}(K)}$ maps $\mathcal{H}(K)$ into $\mathcal{H}(K).$

Now, using the definition of $\mathcal{H}(L)$, it is easy to 
show that $\psi$ is a multiplier of $\mathcal{H}(K)$ with multiplier 
norm bounded by one if and only if $\psi$ is a multiplier of 
$\mathcal{H}(L)$ with multiplier norm bounded by one. By the 
analogous scalar valued result, which appears as Corollary 
2.3.7 in \cite{ampi}, 
it follows that $\psi$ is a multiplier of $ \mathcal{H}(L)$ with multiplier 
norm bounded by one if and only if
\[
 \big( 1 - \psi(z) \overline{ \psi(w)} \big) L \big( (z, \eta), 
(w, \nu) \big) \text{ is a positive kernel}. 
\]
The result then follows by using the definition of a positive kernel 
to show that  $\big( 1 - \psi(z) \overline{ \psi(w)} \big) L \big( (z, \eta),
 (w, \nu) \big)$ is a positive kernel if and only if 
$\big( 1 - \psi(z) \overline{ \psi(w)} \big) K(z,w) $ is a positive kernel.
 \end{proof}

\begin{thm}  \label{thm:kersum} Let $\mathcal{H}(K_1), 
\mathcal{H}(K_2)$ be reproducing kernel Hilbert spaces
 of $E$ valued functions on $\Omega$. Then $\mathcal{H}(K_1 + 
K_2)$ is precisely the Hilbert space composed of the set of functions
\[
 \mathcal{H}(K_1) + \mathcal{H}(K_2) := \left \{ f_1 +f_2 : 
f_j \in \mathcal{H}(K_j) \right \}. 
\]
equipped with the norm
\[
 \| f \|^2_{\mathcal{H}(K_1+K_2)} = \min_{\substack{ f = f_1 + f_2 \\ f_j
 \in \mathcal{H}(K_j)}} \|f_1\|^2_{\mathcal{H}(K_1)} 
 + \|f_2 \|^2_{\mathcal{H}(K_2)}. 
 \]
\end{thm}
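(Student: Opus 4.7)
My plan is to mirror the proofs of Theorems \ref{thm:kerdiff} and \ref{thm:kermult}: reduce the vector-valued statement to the well-known scalar-valued sum-of-kernels theorem of Aronszajn by passing through the scalarization in Definition \ref{defn:scalarhs}.

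First, for $j = 1, 2$, I would introduce the scalar-valued reproducing kernel Hilbert space $\mathcal{H}(L_j)$ on $\Omega \times E$ associated to $\mathcal{H}(K_j)$, with reproducing kernel
\[
L_j\bigl((z,\eta),(w,\nu)\bigr) = \eta^* K_j(z,w) \nu.
\]
The sum kernel $K_1 + K_2$ corresponds in the same way to $L_1 + L_2$, since $\eta^*(K_1+K_2)(z,w)\nu = L_1 + L_2$ evaluated at $((z,\eta),(w,\nu))$. The scalarization $f \mapsto \tilde f$ is an isometric bijection between $\mathcal{H}(K_j)$ and $\mathcal{H}(L_j)$, and likewise between $\mathcal{H}(K_1 + K_2)$ and $\mathcal{H}(L_1 + L_2)$. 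Crucially, this map is linear, so $\widetilde{f_1 + f_2} = \tilde f_1 + \tilde f_2$, and a decomposition on the $E$-valued side corresponds exactly to a decomposition on the scalar side.

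Next I would invoke the classical scalar-valued sum-of-kernels theorem of Aronszajn (see p.\ 353 of \cite{aro50}), which asserts exactly the desired conclusion for $\mathcal{H}(L_1+L_2)$: as a set it equals $\mathcal{H}(L_1) + \mathcal{H}(L_2)$, and its norm is
\[
\|\tilde f\|_{\mathcal{H}(L_1+L_2)}^2 = \min_{\tilde f = \tilde f_1 + \tilde f_2} \bigl(\|\tilde f_1\|_{\mathcal{H}(L_1)}^2 + \|\tilde f_2\|_{\mathcal{H}(L_2)}^2\bigr).
\]

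Finally, I would translate back. Given $f \in \mathcal{H}(K_1+K_2)$, the scalar theorem yields $\tilde f = \tilde f_1 + \tilde f_2$ with $\tilde f_j \in \mathcal{H}(L_j)$, and the bijectivity of the tilde map together with the uniqueness remark preceding Definition \ref{defn:scalarhs} produces uniquely determined $f_j \in \mathcal{H}(K_j)$ with $f = f_1 + f_2$; conversely any such decomposition scalarizes. Since $\|f_j\|_{\mathcal{H}(K_j)} = \|\tilde f_j\|_{\mathcal{H}(L_j)}$ and $\|f\|_{\mathcal{H}(K_1+K_2)} = \|\tilde f\|_{\mathcal{H}(L_1+L_2)}$, the minimum formula transfers verbatim. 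I do not expect a genuine obstacle here: the only thing to be careful about is verifying that the isometric identification behaves well with respect to sums, which is immediate from linearity of $f \mapsto \tilde f$ and the fact that a linear combination of functions in $\mathcal{H}(K_j)$ scalarizes to the corresponding linear combination in $\mathcal{H}(L_j)$.
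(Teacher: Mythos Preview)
Your proposal is correct and follows essentially the same route as the paper: introduce the scalar kernels $L_j((z,\eta),(w,\nu)) = \eta^* K_j(z,w)\nu$, invoke Aronszajn's scalar sum-of-kernels theorem (p.~353 of \cite{aro50}), and then transfer back via the isometric bijection $f \mapsto \tilde f$ of Definition~\ref{defn:scalarhs}. In fact you supply more detail than the paper, which simply records the scalar statement and leaves the translation as an exercise.
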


\begin {proof} As before consider the related scalar valued reproducing
 kernel Hilbert spaces $\mathcal{H}(L_1)$ and $\mathcal{H}(L_2)$, where 
 \[
  L_1 \big( (z,\eta), (w,\nu) \big) :=\eta^* K_1(z,w)  \nu \ \ \text{ and } 
 L_2 \big( (z,\eta), (w,\nu) \big) :=\eta^* K_2(z,w)  \nu. 
\]
The analogous scalar valued result, which appears on page 353 in \cite{aro50}, states
 $\mathcal{H}(L_1 + L_2)$ is precisely the Hilbert space composed of the
set of functions
\[
\mathcal{H}(L_1) + \mathcal{H}(L_2) := \left \{ f_1 +f_2 : f_j \in
\mathcal{H}(L_j) \right \}. 
\]
equipped with the norm
\[
 \| f \|^2_{\mathcal{H}(L_1+L_2)} = \min_{\substack{ f = f_1 + f_2
 \\ f_j \in \mathcal{H}(L_j)}} \|f_1\|^2_{\mathcal{H}(L_1)} + 
 \|f_2 \|^2_{\mathcal{H}(L_2)}, 
 \]
Using this and the connections between $\mathcal{H}(L_j)$ 
and $\mathcal{H}(K_j)$,  it is easy to deduce the desired result. 
The details are left as an exercise.\end{proof}


\end{document}